\documentclass[12pt]{amsart}
\usepackage{amssymb}
\usepackage{amsmath}
\usepackage{amsthm}
\usepackage{graphicx}
\usepackage{enumerate}
\usepackage{tikz}
\usepackage{cleveref}
\usetikzlibrary{arrows}
\usetikzlibrary{arrows.meta}
\tikzset{>={Latex[width=1.2mm,length=1.7mm]}}
\usepackage{tabularx}
\usepackage{mathrsfs}
\newtheorem{thm}{Theorem}[section]
\newtheorem{prop}[thm]{Proposition}
\newtheorem{cor}[thm]{Corollary}

\newtheorem{obs}[thm]{Observation}
\newtheorem{conj}[thm]{Conjecture}
\newtheorem{prob}[thm]{Problem}
\newtheorem{lem}[thm]{Lemma}
\theoremstyle{definition}
\newtheorem{defn}[thm]{Definition}%[section]
\newtheorem{alg}[thm]{Algorithm}
\newtheorem{quest}[thm]{Question}

\numberwithin{equation}{section}

\newcommand{\sn}{\mathfrak{S}_n}

\newcommand{\mfs}[1]{\mathfrak{S}_{#1}}

\newcommand{\bn}{\mathfrak{B}_n}

\newcommand{\zsn}{\mathbb{Z}[\sn]}

\newcommand{\zqq}{\mathbb{Z}[\qp12, \qm12]}

\newcommand{\qp}[2]{q^{\frac{#1}{#2}}}
\newcommand{\qm}[2]{q^{\negthinspace\Bar\,\frac{#1}{#2}}}

\newcommand{\ol}[1]{\overline{#1}}

\newcommand{\hnq}{H_n(q)}

\newcommand{\A}{\mathcal{A}}
\newcommand{\B}{\mathcal{B}}
\newcommand{\C}{\mathcal{C}}
\newcommand{\D}{\mathcal{D}}

\newcommand{\pgap}{\mathrm{gap_{3412}}}
\newcommand{\singdeg}{\mathrm{singdeg}}

\newcommand{\icoh}{\mathrm{IH}}

\newcommand{\Wejm}{W^{\emptyset,J}_-}

\newcommand{\wtc}[2]{\widetilde{C}_{#1}}
\newcommand{\inv}{\textsc{inv}}

\newcommand{\defeq}{:=} 
\newcommand{\dfct}{\mathrm{dfct}}

\newcommand{\spn}{\mathrm{span}}

\newcommand{\src}[2]{\mathrm{src}_{{#1}, {#2}}}
\newcommand{\snk}[2]{\mathrm{snk}_{{#1}, {#2}}}
\newcommand{\ctr}[1]{{x}_{#1}}
\newcommand{\trunc}[1]{\mathrm{trunc}_{#1}}
\newcommand{\hgt}{\mathrm{height}}

\newcommand{\type}{\mathrm{type}}

\newcommand{\avoidsp}{avoids the patterns $3412$ and $4231${}}
\newcommand{\avoidp}{avoid the patterns $3412$ and $4231${}}
\newcommand{\avoidingp}{avoiding the patterns $3412$ and $4231${}}

\newcommand{\ntnsp}{\negthinspace}
\newcommand{\ntksp}{\negthickspace}
\newcommand{\nTksp}{\negthickspace\negthickspace}

\newcommand{\bp}{\begin{prob}}
\newcommand{\ep}{\end{prob}}

\newcommand{\ssm}{\smallsetminus}

\newcommand{\net}[1]{\mathcal G_{#1}}
\newcommand{\cnet}[1]{\mathcal F^\bullet_{#1}}

\newcommand{\ocnet}[1]{\mathcal F_{#1}}%{\mathcal F^{\circ\bullet}_{#1}}

\newcommand{\msfA}{\mathsf{A}}

\newcommand{\vertex}[1]{{#1}}
\newcommand{\phsum}{\phantom{\sum_A}}
\newcommand{\phint}{\phantom{\int}}
\newcommand{\phc}{\phantom{\wtc wq}}
\newcommand{\phmat}{\phantom{\begin{matrix}1\\1\\1\\1\end{matrix}}}
\newcommand{\phm}{\phantom M}

\newcommand{\schub}[1]{\Omega_{#1}}

\newcommand{\Fdbcae}{
    \begin{tikzpicture}[scale=.5,baseline=40]
  \pgfmathsetmacro{\indexOffset}{0.4}
  \foreach \index in {1,...,5} {
    \fill (0, \index) circle (1.3mm);
    \node at (0 - \indexOffset, \index) {$\scriptstyle \index$};
    \fill (3, \index) circle (1.3mm);
    \node at (3 + \indexOffset, \index) {$\scriptstyle \index$}; }
    \draw[-, thick] (0, 1) -- (1, 2) -- (2, 4) -- (3, 4);
    \draw[-, thick] (0, 2) -- (1, 1) -- (2, 1) -- (3, 2);
    \draw[-, thick] (0, 3) -- (1, 3) -- (3, 3) -- (3, 3);
    \draw[-, thick] (0, 4) -- (1, 4) -- (2, 2) -- (3, 1);
    \draw[-, thick] (0, 5) -- (1, 5) -- (2, 5) -- (3, 5);
    \fill (0.5, 1.5) circle (1.3mm);  
    \fill (1.5, 3) circle (1.3mm); 
    \fill (2.5, 1.5) circle (1.3mm);  
\end{tikzpicture}}

\newcommand{\Febcad}{
    \begin{tikzpicture}[scale=.5,baseline=40]
  \pgfmathsetmacro{\indexOffset}{0.4}
  \foreach \index in {1,...,5} {
    \fill (0, \index) circle (1.3mm);
    \node at (0 - \indexOffset, \index) {$\scriptstyle \index$};
    \fill (4, \index) circle (1.3mm);
    \node at (4 + \indexOffset, \index) {$\scriptstyle \index$}; }
    \draw[-, thick] (0, 1) -- (1, 2) -- (2, 4) -- (3, 4) -- (4, 5);
    \draw[-, thick] (0, 2) -- (1, 1) -- (2, 1) -- (3, 2) -- (4, 2);
    \draw[-, thick] (0, 3) -- (1, 3) -- (3, 3) -- (3, 3) -- (4, 3);
    \draw[-, thick] (0, 4) -- (1, 4) -- (2, 2) -- (3, 1) -- (4, 1);
    \draw[-, thick] (0, 5) -- (1, 5) -- (2, 5) -- (3, 5) -- (4, 4);
    \fill (0.5, 1.5) circle (1.3mm);  
    \fill (1.5, 3) circle (1.3mm); 
    \fill (2.5, 1.5) circle (1.3mm);  
    \fill (3.5, 4.5) circle (1.3mm);
\end{tikzpicture}}

\newcommand{\Fceadb}{
    \begin{tikzpicture}[scale=.5,baseline=40]
  \pgfmathsetmacro{\indexOffset}{0.4}
  \foreach \index in {1,...,5} {
    \fill (0, \index) circle (1.3mm);
    \node at (0 - \indexOffset, \index) {$\scriptstyle \index$};
    \fill (4, \index) circle (1.3mm);
    \node at (4 + \indexOffset, \index) {$\scriptstyle \index$}; }
    \draw[-, thick] (0, 1) -- (1, 1) -- (2, 2) -- (3, 2) -- (4, 3);
    \draw[-, thick] (0, 2) -- (1, 3) -- (2, 3) -- (3, 5) -- (4, 5);
    \draw[-, thick] (0, 3) -- (1, 2) -- (2, 1) -- (3, 1) -- (4, 1);
    \draw[-, thick] (0, 4) -- (1, 4) -- (2, 4) -- (3, 4) -- (4, 4);
    \draw[-, thick] (0, 5) -- (1, 5) -- (2, 5) -- (3, 3) -- (4, 2);
    \fill (0.5, 2.5) circle (1.3mm);  
    \fill (1.5, 1.5) circle (1.3mm); 
    \fill (2.5, 4) circle (1.3mm);  
    \fill (3.5, 2.5) circle (1.3mm);
\end{tikzpicture}}

\newcommand{\Fcebda}{
    \begin{tikzpicture}[scale=.5,baseline=40]
  \pgfmathsetmacro{\indexOffset}{0.4}
  \foreach \index in {1,...,5} {
    \fill (0, \index) circle (1.3mm);
    \node at (0 - \indexOffset, \index) {$\scriptstyle \index$};
    \fill (3, \index) circle (1.3mm);
    \node at (3 + \indexOffset, \index) {$\scriptstyle \index$}; }
    \draw[-, thick] (0, 1) -- (1, 1) -- (2, 1) -- (3, 3);
    \draw[-, thick] (0, 2) -- (1, 3) -- (2, 5) -- (3, 5);
    \draw[-, thick] (0, 3) -- (1, 2) -- (3, 2);
    \draw[-, thick] (0, 4) -- (3, 4);
    \draw[-, thick] (0, 5) -- (1, 5) -- (2, 3) -- (3, 1);
    \fill (0.5, 2.5) circle (1.3mm);  
    \fill (1.5, 4) circle (1.3mm); 
    \fill (2.5, 2) circle (1.3mm);  
\end{tikzpicture}}

\newcommand{\Fcedab}{
    \begin{tikzpicture}[scale=.5,baseline=40]
  \pgfmathsetmacro{\indexOffset}{0.4}
  \foreach \index in {1,...,5} {
    \fill (0, \index) circle (1.3mm);
    \node at (0 - \indexOffset, \index) {$\scriptstyle \index$};
    \fill (4, \index) circle (1.3mm);
    \node at (4 + \indexOffset, \index) {$\scriptstyle \index$}; }
    \draw[-, thick] (0, 1) -- (1, 1) -- (2, 1) -- (3, 2) -- (4, 3);
    \draw[-, thick] (0, 2) -- (0.5, 3) -- (1.5, 4) -- (2, 5) -- (4, 5);
    \draw[-, thick] (0, 3) -- (0.5, 3) -- (1.5, 4) -- (2, 4) -- (4, 4);
    \draw[-, thick] (0, 4) -- (1, 2) -- (2, 2) -- (3, 1) -- (4, 1);
    \draw[-, thick] (0, 5) -- (1, 5) -- (2, 3) -- (3, 3) -- (4, 2);
\node at (.75, 3.85) {$\scriptstyle _{(2)}$};
    \fill (0.5, 3) circle (1.3mm);  
    \fill (1.5, 4) circle (1.3mm); 
    \fill (2.5, 1.5) circle (1.3mm);  
    \fill (3.5, 2.5) circle (1.3mm);
\end{tikzpicture}}

\newcommand{\Febdca}{
    \begin{tikzpicture}[scale=.5,baseline=40]
  \pgfmathsetmacro{\indexOffset}{0.4}
  \foreach \index in {1,...,5} {
    \fill (0, \index) circle (1.3mm);
    \node at (0 - \indexOffset, \index) {$\scriptstyle \index$};
    \fill (3, \index) circle (1.3mm);
    \node at (3 + \indexOffset, \index) {$\scriptstyle \index$}; }
    \draw[-, thick] (0, 1) -- (1, 2) -- (2, 5) -- (3, 5);
    \draw[-, thick] (0, 2) -- (1, 1) -- (2, 1) -- (3, 2);
    \draw[-, thick] (0, 3) -- (1, 3) -- (2, 4) -- (3, 4);
    \draw[-, thick] (0, 4) -- (1, 4) -- (2, 3) -- (3, 3);
    \draw[-, thick] (0, 5) -- (1, 5) -- (2, 2) -- (3, 1);
    \fill (0.5, 1.5) circle (1.3mm);  
    \fill (1.5, 3.5) circle (1.3mm); 
    \fill (2.5, 1.5) circle (1.3mm);  
\end{tikzpicture}}

\newcommand{\Fecdba}{
    \begin{tikzpicture}[scale=.5,baseline=40]
  \pgfmathsetmacro{\indexOffset}{0.4}
  \foreach \index in {1,...,5} {
    \fill (0, \index) circle (1.3mm);
    \node at (0 - \indexOffset, \index) {$\scriptstyle \index$};
    \fill (3, \index) circle (1.3mm);
    \node at (3 + \indexOffset, \index) {$\scriptstyle \index$}; }
    \draw[-, thick] (0, 1) -- (1, 1) -- (2, 4) -- (3, 5);
    \draw[-, thick] (0, 2) -- (1, 2) -- (2, 3) -- (3, 3);
    \draw[-, thick] (0, 3) -- (1, 5) -- (2, 5) -- (3, 4);
    \draw[-, thick] (0, 4) -- (0.5, 4) -- (1.5, 2.5) -- (2, 2) -- (3, 2);
    \draw[-, thick] (0, 5) -- (0.5, 4) -- (1.5, 2.5) -- (2, 1) -- (3, 1);
\node at (1.25, 3.7) {$\scriptstyle _{(2)}$};
    \fill (0.5, 4) circle (1.3mm);  
    \fill (1.5, 2.5) circle (1.3mm); 
    \fill (2.5, 4.5) circle (1.3mm);  
\end{tikzpicture}}

\newcommand{\Febcda}{
    \begin{tikzpicture}[scale=.5,baseline=40]
  \pgfmathsetmacro{\indexOffset}{0.4}
  \foreach \index in {1,...,5} {
    \fill (0, \index) circle (1.3mm);
    \node at (0 - \indexOffset, \index) {$\scriptstyle \index$};
    \fill (5, \index) circle (1.3mm);
    \node at (5 + \indexOffset, \index) {$\scriptstyle \index$}; }
    \draw[-, thick] (0, 1) -- (1, 2) -- (2, 2) -- (3, 4) -- (4, 4) -- (5, 5);
    \draw[-, thick] (0, 2) -- (1, 1) -- (2, 1) -- (3, 1) -- (4, 2) -- (5, 2);
    \draw[-, thick] (0, 3) -- (1, 3) -- (2, 3) -- (3, 3) -- (4, 3) -- (5, 3);
    \draw[-, thick] (0, 4) -- (1, 4) -- (2, 5) -- (3, 5) -- (4, 5) -- (5, 4);
    \draw[-, thick] (0, 5) -- (1, 5) -- (2, 4) -- (3, 2) -- (4, 1) -- (5, 1);
    \fill (0.5, 1.5) circle (1.3mm);  
    \fill (1.5, 4.5) circle (1.3mm); 
    \fill (2.5, 3) circle (1.3mm);  
    \fill (3.5, 1.5) circle (1.3mm);
    \fill (4.5, 4.5) circle (1.3mm);
\end{tikzpicture}}

\newcommand{\Fdebca}{
    \begin{tikzpicture}[scale=.5,baseline=40]
  \pgfmathsetmacro{\indexOffset}{0.4}
  \foreach \index in {1,...,5} {
    \fill (0, \index) circle (1.3mm);
    \node at (0 - \indexOffset, \index) {$\scriptstyle \index$};
    \fill (4, \index) circle (1.3mm);
    \node at (4 + \indexOffset, \index) {$\scriptstyle \index$}; }
    \draw[-, thick] (0, 1) -- (1, 1) -- (2, 1) -- (3, 3) -- (4, 4);
    \draw[-, thick] (0, 2) -- (1, 3) -- (2, 5) -- (3, 5) -- (4, 5);
    \draw[-, thick] (0, 3) -- (1, 2) -- (2, 2) -- (3, 2) -- (4, 2);
    \draw[-, thick] (0, 4) -- (1, 4) -- (2, 4) -- (3, 4) -- (4, 3);
    \draw[-, thick] (0, 5) -- (1, 5) -- (2, 3) -- (3, 1) -- (4, 1);
    \fill (0.5, 2.5) circle (1.3mm);  
    \fill (1.5, 4) circle (1.3mm); 
    \fill (2.5, 2) circle (1.3mm);  
    \fill (3.5, 3.5) circle (1.3mm);
\end{tikzpicture}}

\def\hhhsp{\def\baselinestretch{0.125}\large\normalsize}

\def\ssp{\def\baselinestretch{1.0}\large\normalsize}
\begin{document}
\author{Tommy Parisi}
\author{Mark Skandera}
\author{Ben Spahiu}
\author{Jiayuan Wang}
%\author{Justin Lambright}%\corref{cor}}   needed for elsarticle
%\ead{jjlambright@anderson.edu}
%\author{Jongwon Kim and Mark Skandera}
%\ead{mas906@lehigh.edu,mark.skandera@gmail.com}
\title[Kahzdan--Lusztig basis elements having no reversal factorization]
%[Reversal factorization of Kazhdan--Lusztig basis elements]
{On Kazhdan--Lusztig basis elements having no reversal factorization}%{On the impossibility of reversal factorization of some Kazhdan--Lusztig basis elements}
%The natural expansion of products Combinatorial formulas for Hecke algebra multiplication}
%at Kazhdan--Lusztig
% basis elements}
%\title[Induced sign characters]

\bibliographystyle{dart}

\date{\today}

\begin{abstract}
%%% Abstract from Defects 2 file:
% Let $H$ be the Iwahori--Hecke algebra corresponding to any Coxeter group.
%   
%An extension [$\star$ Clearwater-S]

For $w$ in the symmetric group $\sn$, let $\wtc wq$
%= \sum P_{v,w}(q) T_v$ 
be the corresponding modified, signless Kazhdan--Lusztig basis element of the type-$\msfA$ Hecke algebra $\hnq$. %indexed by $w$.
%the permutation 
%$w \in \sn$.
%Clearwater and the second author's 
An extension 
[{\em Ann.\;Comb.}\;{\bf 25}, no.\,3 (2021) pp.~757--787] of a result 
of Deodhar
%{\em defect} statistic 
[{\em Geom.\;Dedicata} {\bf 36}, (1990) pp.~95--119] implies that 
%if 
%a $\mathbb Z[q]$-multiple of 
%$\wtc wq$ factors as 
any factorization of the form
\begin{equation*}
    \wtc wq = \frac1{f(q)} \wtc{v^{(1)}}q \cdots \wtc{v^{(r)}}q,
    \end{equation*}
with $v^{(1)},\dotsc,v^{(r)}$ maximal elements of parabolic subgroups of 
%the symmetric group 
$\sn$ and $f(q) \in \mathbb N[q]$ depending on these,
%corresponding
%to smooth Schubert varieties,
%then a certain planar nework
provides cancellation-free combinatorial
interpretations of the polynomials $\{P_{v,w}(q) \,|\, v \in \sn \}$ appearing
in the expansion $\sum_v P_{v,w}(q) T_v$ 
of $\wtc wq$ in terms of the natural basis $\{ T_v \,|\, v \in \sn \}$ of $\hnq$.
%the Hecke algebra.
While the set of permutations $w \in \sn$ admitting such a factorization of $\smash{\wtc wq}$ has not yet been characterized,
we apply a result of Gaetz -- Gao 
[{\em Adv. Math.}
{\bf 457} (2024)
Paper No. 109941] 
%of Gaetz -- Gao 
to describe a set admitting 
%for which 
no such factorization.
%exists.  %($\star$ Replace $\wtc wq$ with $\widetilde{C}_w$?)
\end{abstract}
\maketitle

\section{Introduction}\label{s:introduction}
The {\em Kazhdan--Lusztig polynomials} $\{ P_{v,w}(q) \,|\, v, w \in \sn \} \subset \mathbb N[q]$ are entries of the change-of-basis matrix relating a certain  {\em Kazhdan--Lusztig basis} of the Hecke algebra with another {\em natural basis.}
First appearing in the study of representations of the Hecke algebra, 
they were given existential and recursive definitions in \cite{KLRepCH}.
Appearances of the polynomials in other areas such as Lie Theory~\cite{BeilBern}, \cite{BeilBernPfofJantzen}, \cite{BryKash},
%\cite{BeilBern}, \cite{BeilBernPfofJantzen}, \cite{BryKash},
quantum groups~\cite{FKK}, 
combinatorics~\cite{HaimanHecke},
and Schubert varieties~\cite{KLRepCH}, \cite{KLSchub} 
have inspired a search for simpler descriptions.
%than those in \cite{KLRepCH}.
Ideally, such a description should interpret each coefficient of $P_{v,w}(q)$ as a set cardinality.

Some 
famous 
alternative formulas for the Kazhdan--Lusztig polynomials are due to Brenti and Deodhar. Brenti expressed $P_{v,w}(q)$ in two different ways
%~\cite[Thm.\,4.1]{Brenti94}, \cite[Thm.\,4.5]{Brenti97} 
~\cite[\S 3]{Brenti94}, \cite[\S 3]{Brenti97} 
as simple linear combinations of recursively defined polynomials in $\mathbb Z[q]$ having both positive and negative 
coefficients.
%~\cite[\S 3]{Brenti94}, \cite[\S 3]{Brenti97}.
Because of negative coefficients and recursive definitions, these formulas do not interpret coefficients in $P_{v,w}(q)$ as set cardinalities. Deodhar~\cite{Deodhar90} developed an algorithm which takes any reduced expression for $w$ as an input, and 
%produces
outputs 
a set $\mathscr E_{\min}$ of (not necessarily reduced) expressions
for other permutations in $\sn$. For each $v \in \sn$ and $k > 0$, the coefficient of $q^k$ in $P_{v,w}(q)$ is equal to the cardinality of a certain subset of $\mathscr E_{\min}$. On the other hand, the algorithmic component of Deodhar's method makes it difficult to apply his combinatorial interpretation in practice.
%(For more progress, see \cite{DyerGKLP}, \cite{LS11}, \cite{ZelSmallResEng}.)

%($\star$ Search for and include more recent results as well.)

Billey and Warrington showed~\cite[Thm.\,1, Rmk.\,6]{BWHex} that 
when $w$ has certain properties, Deodhar's algorithm is trivial, %terminates immediately 
and the output set 
$\mathscr E_{\min}$ of expressions can be replaced by a more visually appealing set of path families in a certain wiring diagram.  
Again for each $v$ and $k$, the coefficient of $q^k$ in $P_{v,w}(q)$ is equal to the cardinality of a subset of these path families. Clearwater and the second author~\cite[Cor.\,5.3]{CSkanTNNChar} then extended this result to permutations $w$ for which the Kazhdan--Lusztig basis element $\smash{\wtc wq}$ factors nicely, but did not solve the problem~\cite[Quest.\,4.5]{SkanNNDCB} of characterizing such permutations $w$. 

In Sections~\ref{s:snplanar} -- \ref{s:hnqplanar} we review basic facts and results about
%relate
the symmetric group, planar networks, the Hecke algebra, and the Kazhdan--Lusztig basis and polynomials.
%review basic facts about the symmetric group, planar networks, Hecke algebra, and 
%we define 
%the Kazhdan--Lusztig basis and polynomials. 
In Section~\ref{s:defectremoval} we use the result~\cite[Cor.\,5.3]{CSkanTNNChar} to state properties of polynomials which arise in the natural expansion of products of certain Kazhdan--Lusztig basis elements of the Hecke algebra. This leads to a partial answer in Section~\ref{s:nofactor} to the characterization question~\cite[Quest.\,4.5]{SkanNNDCB}: a description of 
%by describing 
certain Kazhdan--Lusztig basis elements which do not factor as desired.

\section{The symmetric group and planar networks}\label{s:snplanar}

Let $\sn$ be the symmetric group, with standard generators
$s_1,\dotsc,s_{n-1}$, length function $\ell$, and Bruhat order $\leq$.
(See, e.g., \cite{BBCoxeter} for definitions.)
Given a word $u = u_1 \cdots u_k$ in $\mfs k$,
and a word $y = y_1 \cdots y_k$ having $k$ distinct letters,
we say that {\em $y$ matches the pattern $u$} if the letters of $y$ appear
in the same relative order as those of $u$; that is, if we have
$u_i < u_j$ if and only if $y_i < y_j$ for all $i,j \in [k] \defeq \{1,\dotsc,k\}$.
On the other hand, 
say that $w \in \mfs n$ 
{\em avoids the pattern $u$} if no subword
%of the one-line notation
of $w$ matches the pattern $u$.

%For example, it 
It is easy to see that for 
each subinterval $[a,b] \defeq \{a,\dotsc,b\}$ of $[n]$,
%$a < b$, 
the 
%permutation $w$ satsifying 
{\em reversal} 
\begin{equation}\label{eq:reversaldef} s_{[a,b]} \defeq 1 \cdots (a-1) b \cdots a (b+1) \cdots n \in \sn
\end{equation}
\avoidsp.
This element
%The reversal $s_{[a,b]}$ 
is the unique longest (maximum length) element of the subgroup of $\sn$ generated by $s_a, \dotsc, s_{b-1}$.
More generally, each {\em parabolic} subgroup of $\sn$ generated by a subset 
%$I$ 
of generators 
has longest element equal to a product of reversals on disjoint intervals.
Multiplication of reversals in $\sn$ or of related elements
\begin{equation}\label{eq:Dab}
D_{[a,b]} \defeq \ntksp 
\sum_{v \leq s_{[a,b]}}\ntksp  v
\end{equation}
in $\zsn$ can be performed graphically with certain planar networks.

Define a {\em planar network of order n} to be a directed, planar, acyclic multigraph with $2n$ boundary vertices having $n$ source vertices on the left and $n$ sink vertices on the right, both labeled $1, \dotsc, n$ from bottom to top.
We will allow edges $(\vertex x, \vertex y)$
to be marked by a positive integer multiplicity $\mu(x,y)$.
Let $\net n$ denote the set of such networks.
For each subinterval $[a,b]$ of $[n]$ we define a {\em simple star network}
%$G_{[a,b]} = \smash{G_{[a,b]}^{[h,l]}} \in \net{A}{[h,l]}$ by
$\smash{F_{[a,b]}} \in \net n$ by
%In particular, we associate to each reversal $s_{[a,b]} \in \sn$ 
%a {\em simple star network} $\smash{F_{[a,b]}}$ bt
\begin{enumerate}
    \item an interior vertex $x$ lies between the sources and sinks,
    \item for $i \in [a,b]$ we have edges from source $i$ to $x$ and from $x$ to sink $i$,
    \item for $i \notin [a,b]$ we have edges from source $i$ to sink $i$.
\end{enumerate}
For example, the simple star network $F_{\smash{[2,4]}} \in \net 4$ is
%above can be more completely drawn as
\begin{equation*}
\begin{tikzpicture}[scale=.5,baseline=15]
\node at (-2.5,2.5) {$\scriptstyle{ \mathrm{source}\;4}$};
\node at (-2.5,1.5) {$\scriptstyle{ \mathrm{source}\;3}$};
\node at (-2.5,0.5) {$\scriptstyle{ \mathrm{source}\;2}$};
\node at (-2.5,-0.5) {$\scriptstyle{ \mathrm{source}\;1}$};  
\node at (2.25,2.5) {$\scriptstyle{ \mathrm{sink}\;4}$};
\node at (2.25,1.5) {$\scriptstyle{ \mathrm{sink}\;3}$};
\node at (2.25,0.5) {$\scriptstyle{ \mathrm{sink}\;2}$};
\node at (2.25,-0.5) {$\scriptstyle{ \mathrm{sink}\;1}$};  
\draw[->,>=stealth'] (-1,2.5) -- (-.2,1.65);
\draw[->,>=stealth'] (-1,1.5) -- (-.2,1.5);
\draw[->,>=stealth'] (-1,0.5) -- (-.2,1.35);
%\draw[->,>=stealth'] (-1,-0.5) -- (-.2,-1.35);
%\draw[->,>=stealth'] (-1,-1.5) -- (-.2,-1.5);
%\draw[->,>=stealth'] (-1,-2.5) -- (-.2,-1.65);
\draw[->,>=stealth'] (0,1.5) -- (.8,2.35);
\draw[->,>=stealth'] (0,1.5) -- (.8,1.5);
\draw[->,>=stealth'] (0,1.5) -- (.8,0.65);
%\draw[->,>=stealth'] (0,-1.5) -- (.8,-2.35);
%\draw[->,>=stealth'] (0,-1.5) -- (.8,-1.5);
%\draw[->,>=stealth'] (0,-1.5) -- (.8,-0.65);
\draw[->,>=stealth'] (-1,-0.5) -- (.8,-0.5);
\node at (-1,2.5) {$\bullet$}; 
\node at (-1,1.5) {$\bullet$}; 
\node at (-1,0.5) {$\bullet$}; 
\node at (-1,-0.5) {$\bullet$};
%\node at (-1,-1.5) {$\bullet$};
%\node at (-1,-2.5) {$\bullet$};
\node at (1,2.5) {$\bullet$}; 
\node at (1,1.5) {$\bullet$}; 
\node at (1,0.5) {$\bullet$}; 
\node at (1,-0.5) {$\bullet$};
%\node at (1,-1.5) {$\bullet$};
%\node at (1,-2.5) {$\bullet$};
\node at (0,1.5) {$\bullet$};
%\node at (0,-1.5) {$\bullet$};
\end{tikzpicture}
\ .
\end{equation*}  
For economy, we will omit edge orientations
%, vertices, 
and the words "source" and "sink" from figures.   
Thus
%drawings of planar networks.  For example,
the seven simple star networks in $\net 4$ are
\begin{equation}\label{eq:simplestarnets}
%  \begin{tikzpicture}[scale=.5,baseline=0]
%  \node at (0,1.5) {$(2)$};     
%  \node at (0,0.5) {$(1)$};     
%  \node at (0,-0.5) {$(\ol 1)$};
%  \node at (0,-1.5) {$(\ol 2)$};
%  \end{tikzpicture}
% \qquad \qquad
\begin{gathered}
   \begin{tikzpicture}[scale=.6,baseline=-25]
\node at (-.4,0) {$\scriptstyle 4$};
\node at (-.4,-1) {$\scriptstyle 3$};
\node at (-.4,-2) {$\scriptstyle{2}$};
\node at (-.4,-3) {$\scriptstyle{1}$};  
\node at (1.4,0) {$\scriptstyle 4$};
\node at (1.4,-1) {$\scriptstyle 3$};
\node at (1.4,-2) {$\scriptstyle{2}$};
\node at (1.4,-3) {$\scriptstyle{1}$};  
\draw[-] (0,0) -- (1,-3);
\draw[-] (0,-1) -- (1,-2);
\draw[-] (0,-2) -- (1,-1);
\draw[-] (0,-3) -- (1,0);
\fill (0,0) circle  (1mm); \fill (0,-1) circle  (1mm); \fill (0,-2) circle  (1mm); \fill (0,-3) circle  (1mm);
\fill (1,0) circle  (1mm); \fill (1,-1) circle  (1mm); \fill (1,-2) circle  (1mm); \fill (1,-3) circle  (1mm);
\fill (.5, -1.5) circle (1mm);
\end{tikzpicture}%,
,\\
\phantom{\sum}\ntksp F_{[1,4]}\phantom{\sum}
\end{gathered}
\phm
\begin{gathered}
\begin{tikzpicture}[scale=.6,baseline=-25]
\node at (-.4,0) {$\scriptstyle 4$};
\node at (-.4,-1) {$\scriptstyle 3$};
\node at (-.4,-2) {$\scriptstyle{2}$};
\node at (-.4,-3) {$\scriptstyle{1}$};  
\node at (1.4,0) {$\scriptstyle 4$};
\node at (1.4,-1) {$\scriptstyle 3$};
\node at (1.4,-2) {$\scriptstyle{2}$};
\node at (1.4,-3) {$\scriptstyle{1}$}; 
%\node at (-.4,0) {$\scriptstyle 2$};
%\node at (-.4,-1) {$\scriptstyle 1$};
%\node at (-.4,-2) {$\scriptstyle{\ol1}$};
%\node at (-.4,-3) {$\scriptstyle{\ol2}$};  
%\node at (1.4,0) {$\scriptstyle 2$};
%\node at (1.4,-1) {$\scriptstyle 1$};
%\node at (1.4,-2) {$\scriptstyle{\ol1}$};
%\node at (1.4,-3) {$\scriptstyle{\ol2}$};  
\draw[-] (0,0) -- (1,-2);
\draw[-] (0,-1) -- (1,-1);
\draw[-] (0,-2) -- (1,0);
\draw[-] (0,-3) -- (1,-3);
\fill (0,0) circle  (1mm); \fill (0,-1) circle  (1mm); \fill (0,-2) circle  (1mm); \fill (0,-3) circle  (1mm);
\fill (1,0) circle  (1mm); \fill (1,-1) circle  (1mm); \fill (1,-2) circle  (1mm); \fill (1,-3) circle  (1mm);
\fill (.5, -1) circle (1mm);
\end{tikzpicture},\\
   \phantom{\sum}\ntksp F_{[2,4]}\phantom{\sum}
 \end{gathered}
\phm
\begin{gathered}
\begin{tikzpicture}[scale=.6,baseline=-25]
\node at (-.4,0) {$\scriptstyle 4$};
\node at (-.4,-1) {$\scriptstyle 3$};
\node at (-.4,-2) {$\scriptstyle{2}$};
\node at (-.4,-3) {$\scriptstyle{1}$};  
\node at (1.4,0) {$\scriptstyle 4$};
\node at (1.4,-1) {$\scriptstyle 3$};
\node at (1.4,-2) {$\scriptstyle{2}$};
\node at (1.4,-3) {$\scriptstyle{1}$}; 
%\node at (-.4,0) {$\scriptstyle 2$};
%\node at (-.4,-1) {$\scriptstyle 1$};
%\node at (-.4,-2) {$\scriptstyle{\ol1}$};
%\node at (-.4,-3) {$\scriptstyle{\ol2}$};  
%\node at (1.4,0) {$\scriptstyle 2$};
%\node at (1.4,-1) {$\scriptstyle 1$};
%\node at (1.4,-2) {$\scriptstyle{\ol1}$};
%\node at (1.4,-3) {$\scriptstyle{\ol2}$};  
\draw[-] (0,0) -- (1,0);
\draw[-] (0,-1) -- (1,-3);
\draw[-] (0,-2) -- (1,-2);
\draw[-] (0,-3) -- (1,-1);
\fill (0,0) circle  (1mm); \fill (0,-1) circle  (1mm); \fill (0,-2) circle  (1mm); \fill (0,-3) circle  (1mm);
\fill (1,0) circle  (1mm); \fill (1,-1) circle  (1mm); \fill (1,-2) circle  (1mm); \fill (1,-3) circle  (1mm);
\fill (.5, -2) circle (1mm);
\end{tikzpicture},\\
   \phantom{\sum}\ntksp F_{[1,3]}\phantom{\sum}
 \end{gathered}
\phm
\begin{gathered}
\begin{tikzpicture}[scale=.6,baseline=-25]
\node at (-.4,0) {$\scriptstyle 4$};
\node at (-.4,-1) {$\scriptstyle 3$};
\node at (-.4,-2) {$\scriptstyle{2}$};
\node at (-.4,-3) {$\scriptstyle{1}$};  
\node at (1.4,0) {$\scriptstyle 4$};
\node at (1.4,-1) {$\scriptstyle 3$};
\node at (1.4,-2) {$\scriptstyle{2}$};
\node at (1.4,-3) {$\scriptstyle{1}$}; 
%\node at (-.4,0) {$\scriptstyle 2$};
%\node at (-.4,-1) {$\scriptstyle 1$};
%\node at (-.4,-2) {$\scriptstyle{\ol1}$};
%\node at (-.4,-3) {$\scriptstyle{\ol2}$};  
%\node at (1.4,0) {$\scriptstyle 2$};
%\node at (1.4,-1) {$\scriptstyle 1$};
%\node at (1.4,-2) {$\scriptstyle{\ol1}$};
%\node at (1.4,-3) {$\scriptstyle{\ol2}$};  
\draw[-] (0,0) -- (1,-1);
\draw[-] (0,-1) -- (1,0);
\draw[-] (0,-2) -- (1,-2);
\draw[-] (0,-3) -- (1,-3);
\fill (0,0) circle  (1mm); \fill (0,-1) circle  (1mm); \fill (0,-2) circle  (1mm); \fill (0,-3) circle  (1mm);
\fill (1,0) circle  (1mm); \fill (1,-1) circle  (1mm); \fill (1,-2) circle  (1mm); \fill (1,-3) circle  (1mm);
\fill (.5, -0.5) circle (1mm);
\end{tikzpicture},\\
   \phantom{\sum}\ntksp F_{[3,4]}\phantom{\sum}
 \end{gathered}
\phm
\begin{gathered}
\begin{tikzpicture}[scale=.6,baseline=-25]
\node at (-.4,0) {$\scriptstyle 4$};
\node at (-.4,-1) {$\scriptstyle 3$};
\node at (-.4,-2) {$\scriptstyle{2}$};
\node at (-.4,-3) {$\scriptstyle{1}$};  
\node at (1.4,0) {$\scriptstyle 4$};
\node at (1.4,-1) {$\scriptstyle 3$};
\node at (1.4,-2) {$\scriptstyle{2}$};
\node at (1.4,-3) {$\scriptstyle{1}$}; 
%\node at (-.4,0) {$\scriptstyle 2$};
%\node at (-.4,-1) {$\scriptstyle 1$};
%\node at (-.4,-2) {$\scriptstyle{\ol1}$};
%\node at (-.4,-3) {$\scriptstyle{\ol2}$};  
%\node at (1.4,0) {$\scriptstyle 2$};
%\node at (1.4,-1) {$\scriptstyle 1$};
%\node at (1.4,-2) {$\scriptstyle{\ol1}$};
%\node at (1.4,-3) {$\scriptstyle{\ol2}$};  
\draw[-] (0,0) -- (1,0);
\draw[-] (0,-1) -- (1,-2);
\draw[-] (0,-2) -- (1,-1);
\draw[-] (0,-3) -- (1,-3);
\fill (0,0) circle  (1mm); \fill (0,-1) circle  (1mm); \fill (0,-2) circle  (1mm); \fill (0,-3) circle  (1mm);
\fill (1,0) circle  (1mm); \fill (1,-1) circle  (1mm); \fill (1,-2) circle  (1mm); \fill (1,-3) circle  (1mm);
\fill (.5, -1.5) circle (1mm);
\end{tikzpicture},\\
   \phantom{\sum}\ntksp F_{[2,3]}\phantom{\sum}
 \end{gathered}
\phm
\begin{gathered}
\begin{tikzpicture}[scale=.6,baseline=-25]
\node at (-.4,0) {$\scriptstyle 4$};
\node at (-.4,-1) {$\scriptstyle 3$};
\node at (-.4,-2) {$\scriptstyle{2}$};
\node at (-.4,-3) {$\scriptstyle{1}$};  
\node at (1.4,0) {$\scriptstyle 4$};
\node at (1.4,-1) {$\scriptstyle 3$};
\node at (1.4,-2) {$\scriptstyle{2}$};
\node at (1.4,-3) {$\scriptstyle{1}$}; 
%\node at (-.4,0) {$\scriptstyle 2$};
%\node at (-.4,-1) {$\scriptstyle 1$};
%\node at (-.4,-2) {$\scriptstyle{\ol1}$};
%\node at (-.4,-3) {$\scriptstyle{\ol2}$};  
%\node at (1.4,0) {$\scriptstyle 2$};
%\node at (1.4,-1) {$\scriptstyle 1$};
%\node at (1.4,-2) {$\scriptstyle{\ol1}$};
%\node at (1.4,-3) {$\scriptstyle{\ol2}$};  
\draw[-] (0,0) -- (1,0);
\draw[-] (0,-1) -- (1,-1);
\draw[-] (0,-2) -- (1,-3);
\draw[-] (0,-3) -- (1,-2);
\fill (0,0) circle  (1mm); \fill (0,-1) circle  (1mm); \fill (0,-2) circle  (1mm); \fill (0,-3) circle  (1mm);
\fill (1,0) circle  (1mm); \fill (1,-1) circle  (1mm); \fill (1,-2) circle  (1mm); \fill (1,-3) circle  (1mm);
\fill (.5, -2.5) circle (1mm);
\end{tikzpicture},\\
   \phantom{\sum}\ntksp F_{[1,2]}\phantom{\sum}
 \end{gathered}
\
\phm
\begin{gathered}
\begin{tikzpicture}[scale=.6,baseline=-25]
\node at (-.4,0) {$\scriptstyle 4$};
\node at (-.4,-1) {$\scriptstyle 3$};
\node at (-.4,-2) {$\scriptstyle{2}$};
\node at (-.4,-3) {$\scriptstyle{1}$};  
\node at (1.4,0) {$\scriptstyle 4$};
\node at (1.4,-1) {$\scriptstyle 3$};
\node at (1.4,-2) {$\scriptstyle{2}$};
\node at (1.4,-3) {$\scriptstyle{1}$}; 
%\node at (-.4,0) {$\scriptstyle 2$};
%\node at (-.4,-1) {$\scriptstyle 1$};
%\node at (-.4,-2) {$\scriptstyle{\ol1}$};
%\node at (-.4,-3) {$\scriptstyle{\ol2}$};  
%\node at (1.4,0) {$\scriptstyle 2$};
%\node at (1.4,-1) {$\scriptstyle 1$};
%\node at (1.4,-2) {$\scriptstyle{\ol1}$};
%\node at (1.4,-3) {$\scriptstyle{\ol2}$};  
\draw[-] (0,0) -- (1,0);
\draw[-] (0,-1) -- (1,-1);
\draw[-] (0,-2) -- (1,-2);
\draw[-] (0,-3) -- (1,-3);
\fill (0,0) circle  (1mm); \fill (0,-1) circle  (1mm); \fill (0,-2) circle  (1mm); \fill (0,-3) circle  (1mm);
\fill (1,0) circle  (1mm); \fill (1,-1) circle  (1mm); \fill (1,-2) circle  (1mm); \fill (1,-3) circle  (1mm);
%\fill (.5, -1.5) circle (1mm);
\end{tikzpicture}.\\
\phantom{\sum}\ntksp F_\emptyset \phantom{\sum}
%\phantom{\sum}\ntksp G_{[\ol2,\ol2]}\phantom{\sum}
 \end{gathered}
\end{equation}

Given networks $E, F \in \net{n}$
%of order $n = |[h,l]|$,
in which all sources have
outdegree $1$ and all sinks have indegree $1$, define
the concatenation $E \circ F$ of $E$ and $F$
as follows.  For $i = 1,\dotsc, n$, do
\begin{enumerate}
\item remove sink $i$ of $E$ and source $i$ of $F$,
\item merge each edge $(\vertex x, \text{sink }i)$ in $E$ with each edge
  $( \text{source }i, \vertex y )$ in $F$
  %to vertex $y$ 
  %are merged
  to form a single edge $(\vertex x, \vertex y)$ in $E \circ F$.
\end{enumerate}
Thus a concatenation of the form
%\begin{equation}\label{eq:basicconcat}
$F_{[a_1,b_1]} \circ \cdots \circ F_{[a_m,b_m]} \in \net n$
%\end{equation}
%in $\net n$ 
has $2n + m$ vertices:
$n$ sources inherited from $F_{[a_1,b_1]}$, $n$ sinks inherited from $F_{[a_m,b_m]}$, and $m$ internal vertices %which we will call 
$x_1,\dotsc, x_m$, where $x_j$ is inherited %comes 
from $F_{[a_j,b_j]}$.
Sometimes in a concatenation $E \circ F$,
%may be a multidigraph, because
there may exist internal vertices $\vertex x$ in $E$, $\vertex y$ in $F$
with
%such that a collection of
$\mu(\vertex x, \vertex y) > 1$ multiplicity-$1$ edges
%at least two edges are
incident upon both.
%Given
Define the {\em condensed concatenation}
$E \bullet F$ to be the
%simple
subdigraph of $E \circ F$ obtained
by removing, for all such pairs $(\vertex x, \vertex y)$,
%with $x \in G$, $y \in H$,
all but one of the $\mu(\vertex x, \vertex y)$ edges incident upon both,
%$$x$ and $y$,
and by marking this remaining edge with the multiplicity $\mu(\vertex x, \vertex y)$.
%Call
%$G \bullet H$ the {\em condensed concatenation} of $G$ and $H$.
%any $m-1$ of these edges.
For example, in $\net 4$ we have the 
%isomorphic 
graphs
\begin{equation}\label{eq:allbutone2}
  F_{[1,3]} \circ F_{[2,4]} \circ F_{[1,3]} =
\begin{tikzpicture}[scale=.6,baseline=0]
\node at (-.8,1.5) {$\scriptstyle 4$};
\node at (-.8,0.5) {$\scriptstyle 3$};
\node at (-.8,-0.5) {$\scriptstyle 2$};
\node at (-.8,-1.5) {$\scriptstyle 1$};  
\node at (3.8,1.5) {$\scriptstyle 4$};
\node at (3.8,0.5) {$\scriptstyle 3$};
\node at (3.8,-0.5) {$\scriptstyle 2$};
\node at (3.8,-1.5) {$\scriptstyle 1$};  
\draw[-] (-.4,1.5) -- (.8,1.5) -- (2.2,-0.5) -- (3.4,-0.5);
\draw[-] (-.4,0.5) -- (.8,-1.5) -- (2.2,-1.5) -- (3.4,0.5);
\draw[-] (-.4,-0.5) -- (.8,-0.5) -- (2.2,1.5) -- (3.4,1.5);
\draw[-] (-.4,-1.5) -- (.8,0.5) -- (2.2,0.5) -- (3.4,-1.5);
\fill (-.4,1.5) circle  (1mm); \fill (-.4,.5) circle  (1mm); \fill (-.4,-.5) circle  (1mm); \fill (-.4,-1.5) circle  (1mm);
\fill (3.4,1.5) circle  (1mm); \fill (3.4,.5) circle  (1mm); \fill (3.4,-.5) circle  (1mm); \fill (3.4,-1.5) circle  (1mm);
\fill (.2, -.5) circle (1mm); \fill (1.5, .5) circle (1mm); \fill (2.8, -.5) circle (1mm);
\node at (0.2, -1.2) {$\scriptstyle x_1$};
\node at (1.5, 1.2) {$\scriptstyle x_2$};
\node at (2.8, -1.2) {$\scriptstyle x_3$};
\end{tikzpicture},
\qquad
%\; \not \cong \;
  F_{[1,3]} \bullet F_{[2,4]} \bullet F_{[1,3]} =
\begin{tikzpicture}[scale=.6,baseline=0]
\node at (-0.9,1.5) {$\scriptstyle 4$};
\node at (-0.9,0.5) {$\scriptstyle 3$};
\node at (-0.9,-0.5) {$\scriptstyle{2}$};
\node at (-0.9,-1.5) {$\scriptstyle{1}$};  
\node at (2.05,-.25) {$\scriptstyle _{(2)}$};
%\node at (2.15,-.35) {$\scriptstyle _{(2)}$};
\node at (.95,-.25) {$\scriptstyle _{(2)}$};
%\node at (.85,-.35) {$\scriptstyle _{(2)}$};
\node at (3.9,1.5) {$\scriptstyle 4$};
\node at (3.9,0.5) {$\scriptstyle 3$};
\node at (3.9,-0.5) {$\scriptstyle{2}$};
\node at (3.9,-1.5) {$\scriptstyle{1}$};  
\draw[-] (-0.5,1.5) -- (.7,1.5) -- (1.5,0.5) -- (2.3,1.5) -- (3.5,1.5);
\draw[-] (-0.5,0.5) -- (-0,-0.5) -- (1.5,0.5) -- (3,-0.5) -- (3.5,0.5);
\draw[-] (-0.5,-0.5) -- (-0,-0.5) -- (1.5,0.5) -- (3,-0.5) -- (3.5,-0.5);
\draw[-] (-0.5,-1.5) -- (-0,-0.5) -- (.5,-1.5) -- (2.5,-1.5) -- (3,-0.5) -- (3.5,-1.5);
\fill (-.5,1.5) circle  (1mm); \fill (-.5,.5) circle  (1mm); \fill (-.5,-.5) circle  (1mm); \fill (-.5,-1.5) circle  (1mm);
\fill (3.5,1.5) circle  (1mm); \fill (3.5,.5) circle  (1mm); \fill (3.5,-.5) circle  (1mm); \fill (3.5,-1.5) circle  (1mm);
\fill (0, -.5) circle (1mm); 
\fill (1.5, .5) circle (1mm); \fill (3, -.5) circle (1mm);
\node at (0.15, 0.1) {$\scriptstyle x_1$};
\node at (1.5, 1.1) {$\scriptstyle x_2$};
\node at (2.8, 0.1) {$\scriptstyle x_3$};
\end{tikzpicture},
\end{equation}
in which the two multiplicity-$2$ edges $(x_1,x_2)$,
$(x_2,x_3)$ of 
$F_{[1,3]}\bullet F_{[2,4]} \bullet F_{[1,3]}$
are the remnants of pairs of edges incident upon the same 
internal vertices in $F_{[1,3]} \circ F_{[2,4]} \circ F_{[1,3]}$.

Define a {\em star network} to be an element 
\begin{equation*}
    F_{[a_1,b_1]} \cdots F_{[a_m,b_m]}
\end{equation*}
of $\net n$ constructed by any combination of concatenation and condensed concatenation of finitely many simple star networks.  Regardless of the types of concatenation used, let $x_1,\dotsc,x_m$
denote the central vertices inherited from the $m$ simple star networks, and define
numbers
\begin{equation}\label{eq:muxixj}
\mu(x_i,x_j) = \# \text{ edges incident upon both $x_i$ and $x_j$ in } F_{[a_1,b_1]} \circ \cdots \circ F_{[a_m,b_m]}.
\end{equation}
Let $\ocnet n$ be the set of all star networks of order $n$. 
%and let $\cnet n$ denote the subset of these constructed by condensed concatenation only. 
%($\star$ Is it necessary to introduce the notation $\cnet n$?)
% consisting of condensed concatenations
% of finitely many simple star networks. 
For $F \in \ocnet n$, call a sequence $\pi = (\pi_1,\dotsc,\pi_n)$
of source-to-sink paths in 
%a star network $F \in \ocnet n$
$F$ a {\em path family of type $v = v_1 \cdots v_n \in \sn$}
 if for all $i$, path $\pi_i$ begins at source $i$ and terminates at sink $v_i$.  Say that $\pi$ {\em covers} $F$ if each multiplicity-$p$ edge $f$ in $F$ 
 %of multiplicity $p$ 
 belongs to
 %$(x_i,x_j)$ of multiplicity $\mu(x_i,x_j)$ in $F$ belongs to $\mu(x_i,x_j)$ 
 $p$ of the paths in $\pi$, 
 and define the sets
%For $F \in \net n$ and $u \in \sn$, define the sets
\begin{equation}\label{eq:pathfams}
  \begin{gathered}
    \Pi(F) = \{ \pi \,|\, \pi \text{ a path family covering } F \},\\
    \Pi_v(F) = \{ \pi \in \Pi(F) \,|\, \type(\pi) = v \}.
%    \Pi_{u,d}(F) = \{ \pi \in \Pi_u(F) \,|\, \dfct(\pi) = d \}.
    \end{gathered}
\end{equation}

In terms of the definitions \eqref{eq:pathfams}, we may combinatorially interpret products of %reversals in $\sn$ or of the related 
elements (\ref{eq:Dab}) quite simply.  
We say that $F$ {\em graphically represents}
\begin{equation}\label{eq:PiwF}
    \sum_{v \in \sn} %\sum_{\pi \in \Pi_w(F)} 
    \ntksp |\Pi_v(F)|\,v
\end{equation}
{\em as an element of $\zsn$}.
For
$F = F_{[a_1,b_1]} \circ \cdots \circ F_{[a_m,b_m]}$,
this element is
$D_{[a_1,b_1]} \cdots D_{[a_m,b_m]}$;
for
$F = F_{[a_1,b_1]} \bullet \cdots \bullet F_{[a_m,b_m]}$,
it is
%this element is
$D_{[a_1,b_1]} \cdots D_{[a_m,b_m]}$
divided by the product, over all pairs $i < j$,
%edges $(x_i,x_j)$,  
of the numbers $\mu(x_i,x_j)!$ defined in \eqref{eq:muxixj}.
In either case, we
%it is easy to 
see that the coefficient of $e$ in \eqref{eq:PiwF} is at least $1$: the unique noncrossing path family covering $F$ has type $e$.

\section{The Hecke algebra and planar networks}\label{s:hnqplanar}

Define the {\em (type-$A$ Iwahori-) Hecke algebra} $\hnq$
to be the
$\zqq$-span of its {\em natural basis} $\{ T_w \,|\, w \in \sn \}$,
with multiplication given by
\begin{equation*}
    T_{s_i} T_w = \begin{cases}
        T_{s_i w} &\text{if $s_i w > w$},\\
        (q-1)T_{s_i w} + q T_w &\text{if $s_i w < w$}.
        \end{cases}
\end{equation*}
%algebra with
% multiplicative identity element
% %s $e$ and 
% $T_e$,
% %respectively,
% generated over %$\mathbb Z$ and 
% $\zqq$
% by elements
% %$s_1,\dotsc, s_{n-1}$ and 
% $T_{s_1},\dotsc, T_{s_{n-1}}$, 
% subject to the relations
% \begin{equation}\label{eq:hnqdef}
% \begin{aligned}
% %\begin{alignedat}{3}
% %s_i^2 &= e &\qquad
% T_{s_i}^2 &= (q-1) T_{s_i} + qT_e &\qquad
% &\text{for $i = 1, \dotsc, n-1$},\\
% %s_is_js_i &= s_js_is_j &\qquad
% T_{s_i}T_{s_j}T_{s_i} &= T_{s_j}T_{s_i}T_{s_j} &\qquad
% &\text{for $|i - j| = 1$},\\
% %s_is_j &= s_js_i &\qquad
% T_{s_i}T_{s_j} &= T_{s_j}T_{s_i} &\qquad
% &\text{for $|i - j| \geq 2$}.
% %\end{alignedat}
% \end{aligned}\end{equation}
%Analogous to the natural basis $\{ w \,|\, w \in \sn \}$ of $\zsn$
%is the natural basis $\{ T_w \,|\, w \in \sn \}$ of $\hnq$,
%where we define
%$T_w = T_{s_{i_1}} \ntksp \cdots T_{s_{i_\ell}}$
%whenever $\sprod i\ell$
%is a reduced %(short as possible)
%expression for $w$ in $\sn$.  %We call $\ell$ the {\em length} of $w$ and write $\ell = \ell(w)$.
%We define the {\em one-line notation} $w_1 \cdots w_n$ of $w \in \sn$ by letting any expression for $w$ act on the word $1 \cdots n$,
%where each generator $s_j = s_{[j,j+1]}$ acts on an $n$-letter word by swapping the letters in positions $j$ and $j+1$, i.e., $s_j \circ v_1 \cdots v_n = v_1 \cdots v_{j-1} v_{j+1} v_j v_{j+2} \cdots v_n$.
%It is easy to show that $\ell(w)$ is equal to $\inv(w)$, the number of inversions in the one-line notation $w_1 \cdots w_n$ of $w$.
%($\star$ Define pattern avoidance.)
Specializing at $\qp12 = 1$ we have $H_n(1) \cong \zsn$
with $T_w \mapsto w$.

A semilinear involution on $\hnq$, known as the {\em bar involution}, is defined by
\begin{equation*}
%\begin{gathered}
      \ol{\qp12} \defeq \qm12, \qquad \ol{T_w} \defeq (T_{w^{-1}})^{-1}, \qquad
%  \sum_{w \in \sn} A_w(q) T_w \mapsto 
\overline{\sum_{w \in \sn} B_w(q) T_w}
  \defeq \sum_{w \in \sn} \ol{B_w(q)}\, \ol{T_w}.
%  \end{gathered}
\end{equation*}
%is defined by
%\begin{equation*}
%\end{equation*}
Kazhdan and Lusztig showed~\cite{KLRepCH} that $\hnq$ has a unique
%bar-invariant
basis $\{ C'_w \,|\, w \in \sn \}$
satisfying 
%\begin{enumerate}
%    \item 
%\end{enumerate}($\star$ or maybe say this more kindly?)
%\begin{equation*}
$\ol{C'_w} = C'_w$ for all $w$
% \qquad
and
%\item 
\begin{equation}\label{eq:klbasis}
\qp{\ell(w)}2 C'_w = \sum_{v \leq w} P_{v,w}(q) T_w,
\end{equation}
where coefficients $P_{v,w}(q) \in \mathbb N[q]$, known as the {\em Kazhdan--Lusztig polynomials}, have constant term $1$, satisfy
$\deg(P_{v,w}(q)) < \frac{\ell(w) - \ell(v)-1}2$ for $v < w$, 
and satisfy
$P_{w,w}(q) = 1$ for all $w$.
%\begin{enumerate}
%    \item $P_{w,w}(q) = 1$ for all $w$,
%   \item %$P_{v,w}(q) = 0$ for $v \not \leq w$
%$\deg(P_{v,w}(q)) < \frac{\ell(w) - \ell(v)-1}2$ for $v < w$.
%It is known that these polynomials
%the {\em Kazhdan--Lusztig polynomials} in (\ref{eq:klbasis}) 
%satisfy $P_{v,w}(q) \in \mathbb N[q]$, and $P_{v,w}(0) = 1$ for $v \leq w$.  
We also have that if
%Also~\cite{LakSan}, if
%the one-line notation for $w$ contains no subword matching the pattern
$w$ \avoidsp,
%\avoidsp, 
then $P_{v,w}(q) = 1$ for all $v \leq w$~\cite{LakSan}.
%$3412$ or $4231$ (e.g., if $w$ is a reversal),
%then the Kazhdan-Lusztig polynomials in (\ref{eq:klbasis})
%are identically $1$~\cite{LakSan}.
For convenience, we define 
\begin{equation}\label{eq: change of basis}
\wtc wq \defeq \qp{\ell(w)}2 C'_w
\end{equation}
and work with this (modified) 
Kazhdan--Lusztig basis $\{ \wtc wq \,|\, w \in \sn \}$.  These elements and their products
appear in various settings, including
%many areas of mathematics including
%of these elements expand 
%roducts of these basis elements and their expansions 
%nonnegatively in the natural and Kazhdan--Lusztig bases,
%and 
%the resulting coefficientsW
%have appeared in 
intersection homology~\cite{BBDFaisceaux}, \cite{SpringerQACI}, 
algorithmic and combinatorial description of Kazhdan--Lusztig basis elements themselves~\cite{BWHex}, \cite{Deodhar90}, 
Schubert varieties~\cite{BWHex}, total nonnegativity~\cite{GJImm}, \cite{SkanNNDCB}, \cite{StemImm}, \cite{StemConj}, 
trace evaluations~\cite{CHSSkanEKL}, \cite{CSkanTNNChar}, \cite{GreeneImm}, \cite{KLSBasesQMBIndSgn}, \cite{SkanHyperGC}, 
and symmetric functions~\cite{CHSSkanEKL}, \cite{HaimanHecke}, \cite{SkanHyperGC}. 
Each element $D_{[a,b]}$ in Section \ref{s:snplanar} is the $\qp 12 = 1$ specialization of $\wtc{s_{[a,b]}}q$.

%We will focus on Deodhar's result
%work in \cite{Deodhar90}.
%($\star$ Motivate the study of products of Kazhdan--Lusztig basis elements with examples from the literature.)
%In 
Deodhar~\cite[Prop.\,3.5]{Deodhar90}
%concerning
%, where he
%considered 
studied sequences
%\begin{equation}\label{eq:genseq}
%\mathbf{s} = 
$(s_{i_1},\dotsc,s_{i_m})$
%\end{equation}
of generators of $\sn$, 
%in $S$, 
products of the 
corresponding 
Kazhdan--Lusztig basis elements 
$\wtc{\smash{s_{i_j}}}q = T_e + T_{\smash{s_{i_j}}}$ of $\hnq$, 
%of 
%a
%the Kazhdan--Lusztig basis of 
%$H$,
%certain modified signless {\em Kazhdan--Lusztig} basis $\{ \wtc wq \,|\, w \in W \}$ of $H$~\cite{KLRepCH},
and their natural expansions
%of the 
%corresponding 
%resulting products
\begin{equation}\label{eq:Deoprod}
\wtc{s_{i_1}\ntksp}q \cdots \wtc{s_{i_m}\ntksp}q
%    (T_e + T_{s_{i_1}}) \cdots (T_e + T_{s_{i_k}}) 
= \sum_{v \in \sn} A_v(q) T_v.
\end{equation}
%in $H$.  
%Each factor on the left-hand side of (\ref{eq:Deoprod}) is a multiple of an element of the signless {\em Kazhdan--Lusztig basis}~\cite{KLRepCH} of $\hnq$. 
%Deodhar
He 
described 
the 
resulting 
coefficients $\{ A_v(q) \,|\, v \in \sn \} \subset \mathbb N[q]$ 
%and Deodhar described these
%\subset \mathbb Z[q]$ 
in terms of {\em subexpressions}
%(essentially) as follows.
%(with different notation).
%(Our notation differs a bit from his.)
%Given sequence $\mathbf{s} = (s_{i_1}, \dotsc, s_{i_k})$ of generators in $S$, 
%Define a {\em subexpression} %$\mathbf{\sigma}$ 
of $(s_{i_1}, \dotsc, s_{i_m})$,
%Specifically, 
%the generator sequence $\mathbf s$ (\ref{eq:genseq})
%to be a 
sequences $\sigma = (
%\sigma_0, 
\sigma_1, \dotsc, \sigma_m)$ %of elements of $H$ satisfying
%\begin{enumerate}
%    \item 
with $\sigma_j \in \{ e, s_{i_j}\}$
%$\sigma_0 = e$ and
% $\sigma_j \in \{ \sigma_{j-1}, \sigma_{j-1}s_{i_j}\}$ 
for $j = 1,\dotsc, m$.
%\end{enumerate}
(Our treatment here differs slightly from that of \cite{Deodhar90} but is equivalent.) Call index $j$ a {\em defect} of $\sigma$ if 
\begin{equation}\label{eq:deodhardefect}
\sigma_1 \cdots \sigma_{j-1}s_{i_j} < \sigma_1 \cdots \sigma_{j-1}
\end{equation}
%$\sigma_{j-1}s_{i_j} < \sigma_{j-1}$ 
%(whether or not $\sigma_j = \sigma_{j-1}s_j$), 
and let $\dfct(\sigma)$ denote the number of defects of $\sigma$. 
(Observe that $j=1$ cannot be a defect: we have $s_{i_1} > e$ always.) 
Each coefficient on the right-hand side of (\ref{eq:Deoprod}) is given by
\begin{equation}\label{eq:Deodefectformula}
    A_v(q) = \sum_\sigma q^{\dfct(\sigma)},
\end{equation}
where the sum is over all subexpressions
$\sigma$ of 
%$\mathbf{s}$
$(s_{i_1},\dotsc, s_{i_m})$
satisfying $\sigma_1 \cdots \sigma_m = v$.
%$\sigma_k = w$.
%(Our terminology differs a bit from that of Deodhar.)
% Deodhar~\cite[Prop.\,3.5]{Deodhar90}
% considered sequences
% $(s_{i_1},\dotsc,s_{i_k})$
% of generators in $S$, 
% elements $\wtc{s_{i_j}\ntksp}q \defeq T_e + T_{s_{i_j}}$ of a
% certain modified signless {\em Kazhdan--Lusztig} basis $\{ \wtc wq \,|\, w \in W \}$ of $H$~\cite{KLRepCH},
% and
% natural expansions of the corresponding products
% \begin{equation}\label{eq:Deoprod}
% \wtc{s_{i_1}\ntksp}q \cdots \wtc{s_{i_k}\ntksp}q
% = \sum_{w \in W} A_w(q) T_w
% \end{equation}
% in $H$.  
% He described the 
% coefficients $\{ A_w(q) \,|\, w \in W \} \subset \mathbb Z[q]$ as follows.
% Define a {\em subexpression}
% of $(s_{i_1}, \dotsc, s_{i_k})$
% to be a sequence $\sigma = (\sigma_0, \sigma_1, \dotsc, \sigma_k)$ of elements of $H$ satisfying
% $\sigma_0 = e$ and
%  $\sigma_j \in \{ \sigma_{j-1}, \sigma_{j-1}s_{i_j}\}$ for $j = 1,\dotsc, k$.
% Call index $j$ a {\em defect} of $\sigma$ if $\sigma_{j-1}s_{i_j} < \sigma_{j-1}$ 
% and let $\dfct(\sigma)$ denote the number of defects of $\sigma$. Then each coefficient on the right-hand side of (\ref{eq:Deoprod}) is given by
% \begin{equation}\label{eq:Deodefectformula}
%     A_w(q) = \sum_\sigma q^{\dfct(\sigma)},
% \end{equation}
% where the sum is over all subexpressions
% $\sigma$ of 
% $(s_{i_1},\dotsc, s_{i_k})$
% satisfying $\sigma_k = w$.

%($\star$ Maybe use a symbol other than $\sigma$ above, in case we want to later use $\sigma$ as a path family of type $s$ for some generator $s$.  Or will these be so far apart that it doesn't matter?)

Billey and Warrington observed~\cite[Rmk.\,6]{BWHex}
that 
%when $W$ and $H$ are
%the symmetric group $\sn$ and type-$\msfA$ Iwahori--Hecke algebra
%$\hanq$, 
% the defect statistic has the following simple graphical interpretation.
the defect statistic has a simple graphical interpretation.
Specifically, subexpressions 
%\sigma = (\sigma_1,\dotsc,\sigma_k)$ 
of $(s_{i_1},\dotsc,s_{i_m})$ correspond bijectively to path families
%$\pi = (\pi_1,\dotsc,\pi_n)$ 
covering 
\begin{equation}\label{eq:wirediagprod}
F =
F_{[i_1,i_1+1]} \circ \cdots \circ F_{[i_m,i_m+1]} 
\end{equation}
in $\net n$ with 
%$\sigma = 
$(\sigma_1,\dotsc,\sigma_m)$ corresponding 
to the family $\pi \in \Pi(F)$
%$ = (\pi_1,\dotsc,\pi_n)$ 
constructed by prescribing %having  
\begin{equation*}
\text{the paths meeting at $\ctr j$ }
\begin{cases}
    \text{cross there} &\text{if $\sigma_j = s_{i_j}$},\\
    \text{do not cross there} &\text{if $\sigma_j = e$}.
    \end{cases}
\end{equation*}
By this bijection, index $j$ is a defect of $\sigma$ in the sense of (\ref{eq:deodhardefect}) if and only if the paths meeting at $\ctr j$
have previously crossed an odd number of times.
%at $\ctr 1, \dotsc, \ctr{j-1}$.
%$j$ is a defect of $\pi$ in the sense of Definition~\ref{d:wddefect}.
%($\star$ Define defect of a path family.)

Clearwater--Skandera 
%and the second author 
extended this 
result~\cite[Cor.\,5.3]{CSkanTNNChar} to products of the form
\begin{equation}\label{eq:CSprod}
%\wtc{v^{(1)}}q \cdots \wtc{v^{(k)}}q
\wtc{s_{[a_1,b_1]}}q \cdots \wtc{s_{[a_m,b_m]}}q
%\bigg( \sum_{u \leq v^{(1)}} T_u \bigg) 
%\cdots
%\bigg( \sum_{u \leq v^{(k)}} T_u \bigg)
= \sum_{v \in \sn} A_v(q) T_v,
\end{equation}
%in $\hnq$,
where 
%$v^{(1)}, \dotsc, v^{(k)}$
%are maximal elements of parabolic subgroups of $\sn$, and 
each factor satisfies
\begin{equation*}
    \wtc{s_{[a_j,b_j]}}q = \nTksp \sum_{u \leq s_{[a_j,b_j]}} \nTksp T_u,
\end{equation*}
%are more general elements of the modified signless Kazhdan--Lusztig basis of $\hnq$
%~\cite[Cor.\,5.3]{CSkanTNNChar}.
%Again, these factors are multiples of the signless Kazhdan--Lusztig
%basis of $\hnq$.
%the corresponding 
since reversals \avoidp.
%(See \cite{HPSWDefectBC} for a type-$\msfBC$ extension.)
%belongs to the modified signless Kazhdan--Lusztig basis of $\hanq$.
This extension requires a more general definition of defects.
While the intersection of two paths in 
\eqref{eq:wirediagprod} is a union of vertices,
the intersection of two paths in a more general star network
\begin{equation*}
%\label{eq:bulletconcat}
F = F_{[a_1,b_1]} \cdots F_{[a_m,b_m]}
\end{equation*}
is a subgraph of $F$ whose connected components are vertices or are paths of the form 
\begin{equation}\label{eq:component}
(x_k,\dotsc, x_\ell)
\end{equation}
for some $k < \ell$.  (For example, consider the unique noncrossing path family covering the second network in \eqref{eq:allbutone2}.) %($\star$ Include source and sink vertices?) 
For each initial vertex $x_k$ in a component \eqref{eq:component} of the intersection of two paths, we will say that the paths {\em meet} at $x_k$.
%Let us say that two paths {\em meet} at a vertex $x_k$ if they enter $x_k$ 
Our embedding of star networks
in the plane naturally allows us to declare an edge entering (exiting) a vertex $x_k$ to be above or below another edge entering (exiting) $x_k$.
%(\ref{eq:})
%We will call a component 
We will call a component \eqref{eq:component} in the intersection of paths $\pi_i$, $\pi_j$,
a {\em crossing} of $\pi_i$ and $\pi_j$ if the two paths enter $x_k$ and exit $x_\ell$ in different orders.
Extending the Billey--Warrington definition of defect to accommodate three or more paths passing through a vertex, we have the following~\cite[\S 5]{CSkanTNNChar}.
%above $\pi_j$ (or vice versa).
%Again we have (\ref{eq:BWdefectformula}), where 
%$F$ has the form
%$F_{v^{(1)}} \circ \cdots \circ F_{v^{(k)}}$, 
%with factors
%belonging to a class of planar networks
%%generalizing those in
%(\ref{eq:A generator wirings}) in that their
%by allowing 
%interior vertices may have indegree and outdegree greater than $2$.

\begin{defn}\label{d:defect}
    Given star network  $F \in \ocnet{n}$ having internal vertices $x_1, x_2, \dotsc,$ and 
    path family $\pi$ covering $F$, 
%    $F \in \ocnet{n}$,
    % = F_{[a_1,b_1]} \bullet \cdots \bullet F_{[a_m,b_m]}
define a {\em defect of $\pi$} at 
%the $k$th star 
$\ctr k$ to be
a triple $(\pi_i, \pi_j, k)$ with
%\begin{enumerate}
%\item $i \neq j$,
%\item 
$i < j$
%\item $0 < j$,
%\item 
and $\pi_i$ and $\pi_j$ meeting at $\ctr k$ after having crossed an odd number of times. 
Define $\dfct(\pi)$ to be the number of defects of $\pi$.
%\end{enumerate}

%($\star$ edit this definition of \emph{defect} to include the condensed case?)
\end{defn}

\begin{obs}\label{o:defect}
    A defect $(\pi_i, \pi_j, k)$ of a path family $\pi$ must satisfy $k \geq 2$.
\end{obs}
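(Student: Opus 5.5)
The plan is to argue straight from Definition~\ref{d:defect}. A defect $(\pi_i,\pi_j,k)$ requires that $\pi_i$ and $\pi_j$ meet at $\ctr k$ \emph{after} having crossed an odd number of times. In particular, they must already have crossed at least once, and that crossing must lie strictly before their meeting at $\ctr k$. So it suffices to show that no crossing of two paths can be completed before the paths reach $\ctr 2$.

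The first step is to make precise the order in which the central vertices are visited. In a star network $F = F_{[a_1,b_1]} \cdots F_{[a_m,b_m]}$, every edge goes from a source or from some $\ctr p$ to some $\ctr r$ with $p<r$, or to a sink. This holds under both $\circ$ and $\bullet$, since condensation only merges parallel edges. Consequently each path visits its central vertices in increasing order of index.

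The second step examines a crossing, which is a component $(x_p,\dotsc,x_r)$ of $\pi_i\cap\pi_j$ that the two paths enter and exit in different orders. Before entering $x_p$, the two paths are disjoint, so any earlier shared portion belongs to a different component. The crossing is therefore completed at its final vertex $x_r$, and the next meeting of the paths happens at the initial vertex $x_k$ of a later component. Since distinct components are separated by the paths diverging, and vertices are visited in increasing index, we get $k>r\ge p\ge 1$.

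Combining the two steps, a defect at $\ctr k$ needs a prior crossing ending at some $x_r$ with $r<k$, so $k\ge 2$. Equivalently, two paths meeting at $\ctr 1$ cannot have met earlier, because $\ctr 1$ is the first internal vertex either path can reach; they have crossed zero times, which is even. This matches the earlier remark that $j=1$ is never a defect in Deodhar's setting. The only mildly delicate point is justifying that a meeting at $x_k$ comes strictly after a completed crossing, that is, that the components are linearly ordered along both paths. This follows from acyclicity together with the increasing-index ordering.
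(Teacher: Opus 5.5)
Your proof is correct and is essentially the justification the paper leaves implicit: the paper states this as an observation without proof, mirroring its earlier remark that $j=1$ is never a defect in Deodhar's setting. The key point in both is the one you isolate: every edge entering $x_1$ comes directly from a source, so two paths meeting at $x_1$ have crossed zero times, while any earlier crossing must end at some $x_r$ with $r<k$.
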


We may state an alternative formulation of the defect statistic in terms of the natural vertical ordering of edges entering an internal vertex $x_k$ of a planar network.
%paths in a star network. 
%The natural vertical ordering of edges entering a vertex $x_k$ 
This ordering induces relations $\prec_k$, $\sim_k$, $\precsim_k$ on paths which pass through $x_k$.  While these relations depend upon the index $k$, we economize notation by omitting $k$ when the vertex $x_k$ is clear from context.
We declare that $\pi_i\prec \pi_j$ if $\pi_i$ enters $x_k$ below $\pi_j$, 
that $\pi_i\sim\pi_j$ 
%if $\pi_i\precsim \pi_j$ and $\pi_j\precsim \pi_i$ (equivalently 
if $\pi_i, \pi_j$ enter $x_k$ on the same edge,
and that $\pi_i\precsim\pi_j$ if 
$\pi_i \prec \pi_j$ or $\pi_i \sim \pi_j$.
%$\pi_i$ enters $x_k$ below or on the same edge as $\pi_j$, and 
 For example, consider the path family
% ($\star$ add a figure: $\Pi(F_{[2,4]}\bullet F_{[1,3]})$ (ascending))
\begin{equation}
    \begin{tikzpicture}[scale=.6,baseline=40]
      % Define variables
      \pgfmathsetmacro{\k}{4}  % Set k = 4
      \pgfmathsetmacro{\n}{4}  % Set n = 7
      \pgfmathsetmacro{\indexOffset}{0.4}
      % Define x-coordinates
      \pgfmathsetmacro{\xstart}{0}
      \pgfmathsetmacro{\xstep}{0.5}
    
      % Draw indices on left and right
      \foreach \index in {1, ..., \n} {
        \node at (\xstart - \indexOffset, \index) {$\scriptstyle \index$};
        \node at (\xstart + \xstep*2*\k + \indexOffset, \index) {$\scriptstyle \index$};
      }
      \node at (1.8, 2.9) {$\scriptstyle (2)$};
      \node at (1, 1.3) {$\scriptstyle x_1$};
      \node at (3, 2.3) {$\scriptstyle x_2$};

    \draw[-, ultra thick, dashed]
        (0,1) -- (1, 1.9) -- (2,1) -- (4,1);
    
        \draw[-, ultra thick, color=green]
        (0,2) -- (1, 2) -- (3, 2.9) -- (4,2);
    
        \draw[-, thick, dotted]
        (0,3) -- (1, 2.1) -- (3, 3) -- (4,3);
    
        \draw[-, ultra thick, color=blue]
        (0,4) -- (2, 4) -- (3, 3.1) -- (4,4);
        
        \node at (0, 1) {$\bullet$};
        \node at (0, 2) {$\bullet$};
        \node at (0, 3) {$\bullet$};
        \node at (0, 4) {$\bullet$};
        \node at (4, 1) {$\bullet$};
        \node at (4, 2) {$\bullet$};
        \node at (4, 3) {$\bullet$};
        \node at (4, 4) {$\bullet$};
        \node at (1, 2) {$\bullet$};
        \node at (3, 3) {$\bullet$};
    \end{tikzpicture}
\end{equation}
and the 
%order in which 
paths entering the vertex $x_2$.
%Defining $\precsim$ 
In terms of this vertex we have
%$x_2$, we have 
\begin{equation*}
\pi_2\sim\pi_3,
\quad  
\pi_2\prec \pi_4,
\quad 
\pi_3\prec \pi_4.
\end{equation*}
%(Also, $\pi_2\precsim\pi_4$ and $\pi_2\precsim\pi_3$.)
\begin{obs}
    The relations $\prec$, $\sim$, $\precsim$ defined in terms of any fixed internal vertex of a star network $F$
    %, $\sim$ 
    are a (strict) partial order, an equivalence relation, and a preorder, respectively, on source-to-sink paths in $F$.
    %transitive, ($\star$ and what other properties?) and 
    These relations satisfy
    \begin{enumerate}
        \item $\pi_i \prec \pi_j \sim \pi_\ell$ implies $\pi_i \prec \pi_\ell$, 
        \item $\pi_i \sim \pi_j \prec \pi_\ell$ implies $\pi_i \prec \pi_\ell$,
        \item $\pi_i \not \prec \pi_j$ is equivalent to $\pi_j \precsim \pi_i$.
%        \item $\star$ What else?
    \end{enumerate}
\end{obs}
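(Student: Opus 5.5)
The observation should follow directly from the definitions once we fix an internal vertex $x_k$ and model the relations by a height function. Every source-to-sink path $\pi$ through $x_k$ enters $x_k$ along exactly one edge. The planar embedding linearly orders the edges entering $x_k$ from bottom to top, so we may define $h(\pi)$ to be the position of that edge in this order. The definitions then say exactly that $\pi_i \prec \pi_j$ if and only if $h(\pi_i) < h(\pi_j)$, that $\pi_i \sim \pi_j$ if and only if $h(\pi_i) = h(\pi_j)$, and that $\pi_i \precsim \pi_j$ if and only if $h(\pi_i) \le h(\pi_j)$. The whole proof reduces everything to properties of $<$, $=$, $\le$ on the integers, pulled back along $h$.

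With this reduction the three structural claims are immediate. The relation $\prec$ is irreflexive and transitive because $<$ is, so it is a strict partial order; in fact it is a strict weak order. The relation $\sim$ is the kernel of $h$, hence reflexive, symmetric and transitive, so it is an equivalence relation. The relation $\precsim$ is the pullback of $\le$, hence reflexive and transitive, so it is a preorder.

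The numbered properties follow in the same way.
\begin{enumerate}
\item If $h(\pi_i) < h(\pi_j) = h(\pi_\ell)$, then $h(\pi_i) < h(\pi_\ell)$, which gives item (1).
\item If $h(\pi_i) = h(\pi_j) < h(\pi_\ell)$, then $h(\pi_i) < h(\pi_\ell)$, which gives item (2).
\item Trichotomy of integers gives $\neg(h(\pi_i) < h(\pi_j))$ if and only if $h(\pi_j) \le h(\pi_i)$, which is item (3).
\end{enumerate}

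The only point needing care is well-definedness. The relations are defined only on paths passing through the fixed vertex $x_k$, so we restrict the domain to those paths. Each such path enters $x_k$ exactly once, since the network is acyclic, so $h$ is a function. We also need the incoming edges to be linearly ordered even when an edge carries multiplicity greater than $1$. A condensed edge counts as a single edge in the ordering, and several paths sharing it receive the same height, which is precisely the $\sim$ case. No step is a genuine obstacle; the main task is stating clearly that the relations are defined per vertex and only on paths through it.
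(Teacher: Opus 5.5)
Your proof is correct. The paper states this observation without proof, treating it as immediate from the definitions, and your height function $h$ (the position of the entering edge in the bottom-to-top order at $x_k$) is exactly the routine verification the paper leaves implicit. Your restriction to paths through the fixed vertex is also necessary: otherwise $\sim$ fails to be reflexive and item (3) fails for paths avoiding $x_k$.
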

In terms of the partial order $\prec = \prec_k$, we can define a defect at vertex $x_k$ as follows.
%ing way.
\begin{lem}\label{l: alt defect defn}
    For star network $F \in \ocnet n$ and path family $\pi \in \Pi(F)$, the triple $(\pi_i, \pi_j, k)$ is a defect if and only if $i < j$ and $\pi_j \prec \pi_i$.
    %enters $F_{[a_k, b_k]}$ above $\pi_j$ with $i<j$.
    % For each triple $(\pi_i, \pi_j, k)$ with $i<j$ and 
    % %covering $F$ and a triple $(\pi_i, \pi_j, k)$ with $i<j$ and 
    % $\pi_i$ and $\pi_j$ meeting at $\ctr k$,
    % %the internal vertex of $F_{[a_k,b_k]}$, 
    % the triple is a defect if and only if
    % $\pi_i$ enters $F_{[a_k,b_k]}$ 
    % above
    % %at a source vertex of a greater index than that which 
    % $\pi_j$.
    % %enters.
\end{lem}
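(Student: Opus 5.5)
The plan is a parity invariant: whether $\pi_i$ is above or below $\pi_j$ in the vertical order, sampled at the moments the two paths are together, flips exactly at each crossing. Fix $i<j$ and $k$ such that $\pi_i$ and $\pi_j$ meet at $\ctr k$; both sides of the claimed equivalence presuppose this. Suppose first that $\pi_i\sim\pi_j$ at $\ctr k$. Then the two paths enter $\ctr k$ on a common edge, so $\ctr k$ is not the initial vertex of a component of their intersection, and they do not meet there. Hence at a meeting vertex we always have either $\pi_i\prec\pi_j$ or $\pi_j\prec\pi_i$. It remains to show that $\pi_j\prec\pi_i$ holds exactly when the number of crossings of $\pi_i,\pi_j$ before $\ctr k$ is odd.

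To see this, track the relative vertical position of the two paths as one moves left to right through the network. At the sources, $\pi_i$ starts below $\pi_j$ because $i<j$. Between two consecutive components of $\pi_i\cap\pi_j$, the paths are vertex- and edge-disjoint. Planarity of the embedding, together with the fact that every star network is drawn as a left-to-right concatenation of simple star networks, then implies that their relative order cannot change on that stretch.

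Concretely, I would argue by induction over the concatenation factors $F_{[a_1,b_1]},\dotsc,F_{[a_m,b_m]}$. Each path passes through each "column" (the slice corresponding to one factor) at a well-defined height. Two disjoint paths in a single simple star network preserve their order, because at most one of them passes through the central vertex and the remaining edges are horizontal and noncrossing. So at the entrance to each component $(x_r,\dotsc,x_\ell)$ the order is the one inherited from the previous exit. Within a component, the paths enter $x_r$ in some order and exit $x_\ell$ in some order, and by definition these orders differ exactly when the component is a crossing. It follows by induction that the order in which they enter $\ctr k$ is $\pi_i\prec\pi_j$ after an even number of crossings and $\pi_j\prec\pi_i$ after an odd number. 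Comparing with Definition~\ref{d:defect} gives the equivalence.

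The main obstacle is making rigorous the claim that disjoint stretches preserve the vertical order. This is essentially a Jordan-curve or planarity argument, and it has to handle the condensed multiplicity edges: paths sharing an edge (the relation $\sim$) are not separated, so the order is only defined on the non-shared portions. I would handle this combinatorially rather than topologically. I would assign to each path and each column index $t$ the height of the source/sink level it occupies between factors $t$ and $t+1$, and check directly from the definition of $F_{[a,b]}$ that if two paths do not share the central vertex of factor $t+1$, their heights keep their relative order. For the condensed case, I would pass to the uncondensed concatenation $\circ$, where heights are always distinct levels, so that a shared condensed edge corresponds to distinct parallel edges there.
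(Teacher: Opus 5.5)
Your proposal is correct and uses the same parity argument as the paper, whose one-line proof simply asserts that for $i<j$ meeting at $\ctr k$, $\pi_i$ enters $\ctr k$ above $\pi_j$ exactly when the two paths have crossed an odd number of times in $F_{[a_1,b_1]}\cdots F_{[a_{k-1},b_{k-1}]}$. Your column-by-column induction supplies the justification the paper leaves implicit. For the lift to the uncondensed concatenation, also note that a component running along a shared multiplicity edge splits into several single-vertex meetings, and their crossing statuses telescope to the same parity, so the crossing count before $\ctr k$ is unchanged mod $2$.
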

%F_{[a_1,b_1]} \bullet \cdots \bullet F_{[a_m,b_m]} 
\begin{proof}
    %Assume $i < j$.  Then 
    Write $F = F_{[a_1,b_1]} \cdots  F_{[a_m,b_m]}$. Assuming $\pi_i, \pi_j$ with $i < j$ meet at $\ctr k$, we have that $\pi_i$ enters $\ctr k$
    %$F_{[a_k,b_k]}$ 
    above $\pi_j$ if and only if the two paths have crossed an odd number of times in $F_{[a_1,b_1]} \cdots F_{[a_{k-1},b_{k-1}]}$.
    %then immediately after an odd number of crossings of $\pi_i$ and $\pi_j$, we have that $\pi_i$ is above $\pi_j$. 
    % If $\pi_i$ enters $F_{[a_k,b_k]}$ above $\pi_j$, then the paths must have crossed an odd number of times, so $(\pi_i, \pi_j, k)$ is a defect.
\end{proof}

Extending the algebraic interpretation \eqref{eq:PiwF} of a planar network, we define the set
\begin{equation}\label{eq:Pi with v and d}
    \Pi_{v,d}(F) = \{ \pi \in \Pi_v(F) \,|\, \dfct(\pi) = d \}
\end{equation}
and we
say that $F$ {\em graphically
represents}
%the element}
\begin{equation}\label{eq:Gtozhnq}
    \sum_{v \in \sn} \sum_{d \geq 0} |\Pi_{v,d}(F)|q^d T_v 
    = \nTksp\sum_{\pi \in \Pi(F)}\nTksp q^{\dfct(\pi)}T_{\type(\pi)}
\end{equation}
{\em as an element of $\hnq$}.
%By \cite[Cor.\,5.3]{CSkanTNNChar}, %\cite[Prop.\,4.2]{SkanNNDCB} we have that
For $F = F_{[a_1,b_1]} \circ \cdots \circ F_{[a_m,b_m]}$, this element is
\begin{equation}\label{eq:KLprod}
    \wtc{s_{[a_1,b_1]}}q \cdots \wtc{s_{[a_m,b_m]}}q
    \end{equation}
  \cite[Cor.\,5.3]{CSkanTNNChar}, \cite[Prop.\,4.2]{SkanNNDCB};  
for $F = F_{[a_1,b_1]} \bullet \cdots \bullet F_{[a_m,b_m]}$, it is
%$\wtc{s_{[a_1,b_1]}}q \cdots \wtc{s_{[a_m,b_m]}}q$, 
\eqref{eq:KLprod} divided by
the product, over all pairs %$(i,j)$, 
$i < j$, of the
%of the numbers $\mu(x_i,x_j)!$.
$q$-factorial polynomials 
$\mu(x_i,x_j)_q!$, 
%of the numbers 
 where we define $\mu(x_i,x_j)$ as in \eqref{eq:muxixj} and
\begin{equation*}
    [p]_q \defeq \begin{cases} 1 + q + \cdots + q^{p-1} &\text{if $p > 0$},\\
    0 &\text{if $p = 0$},
    \end{cases}
    \qquad\quad
    [p]_q! \defeq \begin{cases} [1]_q [2]_q \cdots [p]_q &\text{if $p > 0$},\\
    1 &\text{if $p = 0$}.
    \end{cases}
\end{equation*}
(See, e.g., \cite{StanEC1}.)
Summarizing results in \cite{CSkanTNNChar}, \cite{SkanNNDCB}, we prove the second fact as follows.

%\begin{lem}\label{l:qfact}
%    Let $F \in \ocnet n$ have a pair of internal vertices $\ctr i, \ctr j$ with $p$ edges incident upon both. Fix a path family $\pi \in \Pi(F)$ such that the $p$ paths traversing these edges, say $\pi_{u_1}, \dots, \pi_{u_p}$, satisfy 
%    \begin{equation*}
%        \pi_{u_1} \prec \cdots \prec \pi_{u_p}, \quad u_1 < \cdots < u_p,
%    \end{equation*}
%    and let $S$ be the set of all path families in $\Pi(F)$ agreeing with $\pi$ everywhere outside of these $p$ edges. Then we have
%    \begin{equation}\label{eq:preimage}
%        \sum_{\pi \in \theta^{-1}(\sigma)} q^{\dfct(\pi)} = [p]_q! q^{\dfct(\sigma)} .
%    \end{equation}
%\end{lem}
%\begin{proof}
%    ($\star$ Prove this)
%\end{proof}

% \begin{lem}
% Let $\pi \in \Pi_v(F)$ have 
% Let $F$ have $p$ edges
% from $\ctr i$ to $\ctr j$, and let $\pi \in \Pi_v(F)$ be a path family with paths
% $\pi_{u_1},\dotsc,\pi_{u_p}$ passing through
% $\ctr i$ and $\ctr j$, with the $\ctr i$-to-$\ctr j$ subpaths appearing from bottom to top in the
% order $\pi_{u_1}$ below $\pi_{u_2}$ below $\dotsc, \pi_{u_p}$.
% \end{lem}

%\begin{lem}
%    Let $e_1, \dots, e_p$ be the edges incident upon both $\ctr i$ and $\ctr j$, and suppose that these are replaced by a single multiplicity-$p$ edge $e$ in creating $F'$\ntnsp. Define a map $\theta : \Pi(F) \rightarrow \Pi(F')$ by replacing each edge $e_k$ in a path of $\pi \in \Pi(F)$ by $e$.
%\end{lem}

\begin{prop}\label{p:qfact}
    Fix $F \in \ocnet n$, assume that %pair of internal vertices 
    %$\ctr i$, $\ctr j$ 
    %are incident upon 
    %$m_{i, j}$ 
$p > 1$
multiplicity-$1$ edges are incident upon some pair of internal vertices, and construct $F' \in \ocnet n$ by replacing these edges by one edge of multiplicity $p$.
%$m_{i,j}$. 
Then $F'$ graphically represents the element
%    \begin{equation*}
%        \frac 1 {[m_{i, j}]_q!} \sum_{v \in \sn} \sum _{d \geq 0} |\Pi_{v, d}(F)| q^d T_v.
%    \end{equation*}
%    or
    \begin{equation*}
     \frac 1{[p]_q!} 
     %{[m_{i,j}]_q!} 
     \ntksp\sum_{\pi \in \Pi(F)}\ntksp q^{\dfct(\pi)}T_{\type(\pi)}.
     \end{equation*}
\end{prop}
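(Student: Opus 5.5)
The plan is to define a natural surjection $\theta:\Pi(F)\to\Pi(F')$ and show that every fiber contributes exactly $[p]_q!$ times the weight of its image. Label the $p$ parallel multiplicity-$1$ edges $e_1,\dotsc,e_p$ from $\ctr i$ to $\ctr j$ (with $i<j$), ordered from bottom to top in the planar embedding. Let $\theta$ replace each occurrence of some $e_r$ in a path by the single multiplicity-$p$ edge $e$ of $F'$. This map sends path families covering $F$ to path families covering $F'$, and it preserves types.

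Conversely, fix $\sigma\in\Pi(F')$, and let $\sigma_{u_1},\dotsc,\sigma_{u_p}$ be the $p$ paths using $e$. The preimages $\theta^{-1}(\sigma)$ correspond bijectively to the $p!$ ways of assigning these paths to the edges $e_1,\dotsc,e_p$. So it suffices to prove
\begin{equation*}
\sum_{\pi\in\theta^{-1}(\sigma)} q^{\dfct(\pi)} = [p]_q!\, q^{\dfct(\sigma)}.
\end{equation*}
Summing over $\sigma$ then gives $\sum_{\pi\in\Pi(F)}q^{\dfct(\pi)}T_{\type(\pi)} = [p]_q!\sum_{\sigma\in\Pi(F')}q^{\dfct(\sigma)}T_{\type(\sigma)}$, which is the claim.

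To compare defects, use Lemma~\ref{l: alt defect defn}: a defect at $\ctr k$ is a pair $i'<j'$ with $\pi_{j'}\prec_k\pi_{i'}$. The families $\pi\in\theta^{-1}(\sigma)$ agree with $\sigma$ everywhere except in how $\sigma_{u_1},\dotsc,\sigma_{u_p}$ are ordered on the parallel edges. Consider first the relations $\prec_k$ at vertices $\ctr k$ with $k\le i$, and all relations not involving two of the $u_r$; these are identical for $\pi$ and $\sigma$.

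At $\ctr j$ and at later vertices, the paths $\sigma_{u_r}$ enter $\ctr j$ on one edge in $F'$, so they are $\sim_j$-equivalent there and contribute no defect at $\ctr j$ in $\sigma$. In $\pi$, the order of $e_1,\dotsc,e_p$ makes them strictly ordered at $\ctr j$. The number of defects among them at $\ctr j$ is then the number of pairs $r<s$ with $u_r<u_s$ but $\pi_{u_s}$ placed on a lower edge, namely the inversion number of the assigned permutation.

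The main obstacle is to show that this assignment does not change defects involving these paths at vertices after $\ctr j$, nor defects between a $u_r$ and another path. I would argue this via the original crossing-parity definition (Definition~\ref{d:defect}). The intersection of $\pi_{u_r}$ and $\pi_{u_s}$ is a component $(\dotsc,\ctr i,\ctr j,\dotsc)$ in the condensed network. Whether that component is a crossing depends only on the entering order at its initial vertex and the exiting order at its final vertex, and these lie outside the edges $e_r$. Hence the parities of crossings, and the defects at later vertices, agree for all $\pi\in\theta^{-1}(\sigma)$ and $\sigma$.

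A subtlety to check is that in $F$ the same pair may instead form two components, separated at $\ctr i\to\ctr j$. In that case a "crossing" on the parallel edges is exactly the extra inversion counted at $\ctr j$, so the two bookkeepings agree. With this, $\dfct(\pi)=\dfct(\sigma)+\mathrm{inv}$, and the standard identity $\sum_{w\in\mfs p}q^{\mathrm{inv}(w)}=[p]_q!$ finishes the proof.
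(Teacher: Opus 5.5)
Your route is the paper's: the same surjection $\theta$, fibres of size $p!$, the fibrewise identity \eqref{eq:preimage}, the defects among the $p$ paths at $x_j$ counted as an inversion number, and $\sum_{u\in\mathfrak S_p}q^{\inv(u)}=[p]_q!$. Your crossing-parity paragraph, including the two-component subtlety, can be dropped. By Lemma~\ref{l: alt defect defn}, whether $(\pi_a,\pi_b,k)$ is a defect depends only on the edges on which the two paths enter $x_k$. For every $k\neq j$ these edges are the same for $\pi$ and $\theta(\pi)$, and that one observation is all the paper uses.

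There is, however, a genuine gap, and the paper's proof contains the same one. You assert that at $x_j$ every relation not involving two of the $u_r$ is unchanged. That requires the $p$ parallel edges to be consecutive among the edges entering $x_j$, and they need not be, since the sets $I_{i,j}$ in \eqref{eq:intervaloverlap} need not be intervals.

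Take $F_{[1,4]}\circ F_{[2,3]}\circ F_{[1,4]}$ with $x_i=x_1$ and $x_j=x_3$. Here $I_{1,3}=\{1,4\}$, so the two edges joining $x_1$ and $x_3$ lie on levels $1$ and $4$, and the edges from $x_2$ enter $x_3$ between them. Let $\sigma$ send $\sigma_1,\sigma_2$ along the merged edge, and send $\sigma_3,\sigma_4$ through $x_2$ on levels $2,3$ without a swap. Its two preimages enter $x_3$ in the bottom-to-top orders $1342$ and $2341$ and have no other defects. So the fibre contributes $q^2+q^3$. On the other hand, $[2]_q\,q^{\dfct(\sigma)}$ equals $1+q$ or $q^4+q^5$, according as the merged edge is drawn on level $1$ or level $4$. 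Thus \eqref{eq:preimage} is false in general.

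The proposition itself survives, but you must sum over classes larger than fibres. Let $N$ be the number of levels from the lowest to the highest parallel edge. Fix everything except two things: the assignment at $x_i$ of paths to these $N$ levels, and the choices at the intermediate stars lying between parallel edges. Such stars avoid the parallel levels, so each is confined to a single gap; let the gap sizes be $g_1,g_2,\dotsc$.

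Within such a class, several quantities are constant: the type, all defects away from $x_j$ and the gap stars, and the inversions at $x_j$ involving paths that enter outside these $N$ levels. The within-gap contributions sum to a common factor $Z$ in both $F$ and $F'$. Finally, the inversions at $x_j$ between different blocks depend only on which paths lie in which block. In $F$ the blocks are the single parallel edges and the gaps; in $F'$ they are the merged edge and the gaps.

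The two class sums are therefore the $q$-multinomial expressions $Z\,[N]_q!/\prod_c[g_c]_q!$ for $F$ and $Z\,[N]_q!/\bigl([p]_q!\prod_c[g_c]_q!\bigr)$ for $F'$. Their ratio is the required factor $[p]_q!$.
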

\begin{proof}
    Let $\ctr r$, $\ctr s$ be the pair of internal vertices in $F$, let $e_1, \dots, e_p$ be the edges incident upon both vertices
    %$\ctr r$, $\ctr s$ 
    in $F$, and let $e$ be the multiplicity-$p$ edge replacing these in $F'$.
    %$\ctr i$ and $\ctr j$, and suppose that these are replaced by a single multiplicity-$p$ edge $e$ in creating $F'$. 
    %Observe that 
    Thus for every path family 
    %$\pi \in 
    in $\Pi(F)$,
    $p$ of its component paths contain the edges $e_1,\dotsc, e_p$,
    and for every path family in $\Pi(F')$,
    $p$ of its component paths contain the edge $e$.
    Define the map 
%    \begin{equation*}
%    \begin{aligned}
        $\theta : \Pi(F) \rightarrow \Pi(F')$
        %\\
%        \pi &\mapsto \pi'
%        \end{aligned}
%        \end{equation}
        by replacing edges $e_1,\dotsc, e_p$ in each path family $\pi \in \Pi(F)$ by $p$ copies of the edge $e$.
        %substituting $e$ for these $p$ edges in the $p$ paths.
        
%        and that these preimages partition $\Pi(F)$,
%        \begin{equation*}           
%        \end{equation*}
             It is clear that 
             the map $\theta$ is surjective and that 
             for each path family $\sigma \in \Pi(F')$, 
        %we have
        %\begin{enumerate}
        %that 
        the set $\theta^{-1}(\sigma) \subseteq \Pi(F)$ consists of $p!$ path families $\pi$ satisfying $\type(\pi) =  \type(\sigma)$.
        We claim that the defects of
            path families in each preimage $\theta^{-1}(\sigma)$ satisfy
    \begin{equation}\label{eq:preimage}
        \sum_{\pi \in \theta^{-1}(\sigma)} 
        \nTksp q^{\dfct(\pi)} = [p]_q! q^{\dfct(\sigma)} .
    \end{equation}
    To see this, fix $\pi \in \theta^{-1}(\sigma)$ and observe that for all internal vertices $\ctr t \neq \ctr s$ of $F$ and $F'$, the triple $(\pi_i, \pi_j,  t)$ is defective if and only if $(\sigma_i, \sigma_j, t)$ is defective:
    the agreement of $\pi$ and $\sigma$ outside of the subgraph induced by $\ctr r$, $\ctr s$ guarantees that
    the preorder $\prec_t$ defined in terms of any internal vertex $\ctr t$ satisfies
    $\pi_i \prec_t \pi_j$ if and only if $\sigma_i \prec_t \sigma_j$. It follows that the difference between $\dfct(\pi)$ and
    %is equal to 
    $\dfct(\sigma)$ is just the number of defects of $\pi$ at $\ctr s$.
    Letting $\pi_{u_1}, \dotsc, \pi_{u_p}$ be the $p$ paths passing through $\ctr r$, $\ctr s$, labeled by
%    But this is equal to the number of inversions in the word $u_1 \cdots u_p$ defined by containing $x$ and $y$, with
    %in terms of the preorder $\prec_y$ 
    %defined in terms of the internal vertex $y$ 
    \begin{equation*}
        \pi_{u_1} \prec_s \cdots \prec_s \pi_{u_p},
    \end{equation*}
%Thus 
we have that the number of defects of $\pi$ at $\ctr s$ 
equals the number of inversions in the word $u_1 \cdots u_p$.
Thus we have
\begin{equation*}
\sum_{\pi \in \theta^{-1}(\sigma)} \nTksp q^{\dfct(\pi)} = 
\sum_{u \in \mfs p} q^{\inv(u) + \dfct(\sigma)},
\end{equation*}
which implies \eqref{eq:preimage}.
%as desired.

%By definition and by \eqref{eq:preimage}, 
It follows that
$F'$ graphically represents the element
    \begin{equation}\label{eq:qfactsum}
        % \sum_{v \in \sn} \sum_{d \geq 0} |\Pi_{v,d}(F')|q^d T_v.
        \nTksp\sum_{\sigma \in \Pi(F')}\nTksp q^{\dfct(\sigma)}T_{\type(\sigma)}
    %\end{equation}
   % By \eqref{eq:preimage}, this is 
   % \begin{equation*}
        = \ntksp \sum_{\sigma \in \Pi(F')} 
        %\frac 1 {[m_{i,j}]_q!} 
        \frac 1{[p]_q!}
        \ntksp \sum_{\pi \in \theta^{-1}(\sigma)} \nTksp q^{\dfct(\pi)} T_{\type(\sigma)}.
        \end{equation}
        Since  $\type(\pi) = \type(\sigma)$
        for all $\pi \in \theta^{-1}(\sigma)$, and since
       \begin{equation*}
            \Pi(F) = \nTksp \bigcup_{\sigma \in \Pi(F')}\nTksp \theta^{-1}(\sigma),
        \end{equation*}
        we have the desired result.
%        It is clear that the map $\theta$ is surjective and that for each path family $\sigma \in \Pi(F')$, 
        %we have
        %\begin{enumerate}
        %that 
%        the set $\theta^{-1}(\sigma) \subseteq \Pi(F)$ consists of $p!$ path families $\pi$ satisfying $\type(\pi) =  \type(\sigma)$.
%Thus we have
%        \begin{equation*}
%            \Pi(F) = \nTksp \bigcup_{\sigma \in \Pi(F')}\nTksp \theta^{-1}(\sigma).
%            \end{equation*}
%            \begin{equation}
%                
%            \end{equation}
%            \item for all $\pi \in \theta^{-1}(\sigma)$, $\type(\pi) = \type(\sigma)$,
 %           \item $ \subset \Pi(F)$ contains $p!$ path families of type $\type(\sigma)$.
% Choose $v$ such that $\Pi_v(F)$ is nonempty.
% For each path family $\pi$ in $\Pi_{v}(F)$,
% the $m_{i,j}$ $x_i$-to-$x_j$ edges appear
% in $m_{i,j}$ different paths.
%     Define a map 
%     \begin{equation*}
%     \begin{aligned}
%         \Pi_{v}(F) &\rightarrow \Pi_{v}(F')\\
%         \pi &\mapsto \pi'
%         \end{aligned}
%         \end{equation*}
% by replacing the $m_{i,j}$ edges in the $m_{i,j}$ paths with
% %from $x_i$ to $x_j$ in $m_{i,j}$ paths of $\pi$ with 
% the single $x_i$-to-$x_j$ path in $F'$.
% subpaths of paths in $\pi$ 
%     with $m_{i,j}$ copies
%     $\pi = (\pi_1,\dotsc,\pi_n)$ be a path family covering $F$
\end{proof}

The product of polynomials $\mu(x_i,x_j)_q!$ 
%over all pairs $(i,j)$ 
mentioned after \eqref{eq:KLprod} also can be expressed in terms of intervals appearing in reversals~\cite[Thm.\,4.3]{SkanNNDCB}.
Given a sequence of $m$ intervals
 \begin{equation}\label{eq:intervalseq}
 \mathcal A = ([a_1,b_1], \dotsc, [a_m,b_m]),
 \end{equation}
 define $\tbinom m2$ more intervals $\{I_{i,j} \,|\, i < j \}$ by
 \begin{equation}\label{eq:intervaloverlap}
 I_{i,j} = [a_i,b_i] \cap [a_j,b_j] \ssm ( [a_{i+1},b_{i+1}] \cup \cdots \cup [a_{j-1},b_{j-1}]).
 \end{equation}
Let $f_{\ntnsp\mathcal A}(q)$ be the product of the $q$-factorials of the cardinalities of the intervals (\ref{eq:intervaloverlap}),
 %polynomial
 \begin{equation}\label{eq:overlappoly}
 f_{\ntnsp\mathcal A}(q) \defeq \prod_{i < j} |I_{i,j}|_q!.
 \end{equation}
% ($\star$ Mention rescaling $C'_w \mapsto \wtc wq$ somewhere. In \eqref{eq: change of basis}?)
Say that a Kazhdan--Lusztig basis element $\wtc wq$ has a {\em reversal factorization} %($\star$ or {\em parabolic factorization}?)
if there is a sequence (\ref{eq:intervalseq}) of intervals satisfying
\begin{equation}\label{eq:reversalfactor}
\wtc wq =
\frac1{f_{\ntnsp \mathcal A}(q)} \wtc{s_{[a_1,b_1]}}q \cdots \wtc{s_{[a_m,b_m]}}q.
\end{equation}
This property of a Kazhdan--Lusztig basis element $\wtc wq$ guarantees the following combinatorial interpretation of the Kazhdan--Lusztig polynomials $\{P_{v,w}(q) \mid v \leq w\}$~\cite[Cor.\,5.3]{CSkanTNNChar}.
\begin{prop}
    Let $\wtc wq$ have the reversal factorization \eqref{eq:reversalfactor} and define the planar network $F = F_{[a_1,b_1]} \bullet \cdots \bullet F_{[a_m,b_m]}$. 
    %as in \eqref{eq:bulletconcat}. 
    Then we have
    \begin{equation}
        P_{v,w}(q) = \sum_{d \geq 0} |\Pi_{v,d}(F)|q^d.
    \end{equation}
\end{prop}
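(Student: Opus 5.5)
The plan is to combine the graphical interpretation of products of reversal basis elements with Proposition~\ref{p:qfact}, and then compare coefficients of $T_v$.

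First consider the ordinary concatenation $E = F_{[a_1,b_1]} \circ \cdots \circ F_{[a_m,b_m]}$. By the result quoted after \eqref{eq:Gtozhnq} (\cite[Cor.\,5.3]{CSkanTNNChar}, \cite[Prop.\,4.2]{SkanNNDCB}), $E$ graphically represents the product \eqref{eq:KLprod}. That is,
\begin{equation*}
\wtc{s_{[a_1,b_1]}}q \cdots \wtc{s_{[a_m,b_m]}}q = \nTksp\sum_{\pi \in \Pi(E)}\nTksp q^{\dfct(\pi)}T_{\type(\pi)}.
\end{equation*}

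Next pass from $E$ to $F = F_{[a_1,b_1]} \bullet \cdots \bullet F_{[a_m,b_m]}$. This is done by repeatedly replacing each bundle of $\mu(x_i,x_j) > 1$ parallel multiplicity-$1$ edges by a single edge of that multiplicity. Proposition~\ref{p:qfact} is applied once per such pair, and each application divides the represented element by $[\mu(x_i,x_j)]_q!$. Applying it over all pairs $i<j$ shows that $F$ graphically represents \eqref{eq:KLprod} divided by $\prod_{i<j}[\mu(x_i,x_j)]_q!$.

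One point must be checked: each intermediate network must still be a star network, with the same internal vertices and the same values $\mu(x_i,x_j)$ in the sense of \eqref{eq:muxixj}. This holds because replacing one bundle does not touch the edges incident to any other pair of internal vertices. Consequently the successive $q$-factorial divisors multiply.

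The main step is to identify this divisor with $f_{\mathcal A}(q)$, i.e., to show $\mu(x_i,x_j) = |I_{i,j}|$ for all $i<j$. In $E$, an edge from $x_i$ to $x_j$ arises by merging sink $t$ of $F_{[a_i,b_i]}$, through the horizontal edges at level $t$ of the intermediate simple star networks, with source $t$ of $F_{[a_j,b_j]}$. Such an edge exists exactly when $t \in [a_i,b_i] \cap [a_j,b_j]$ and $t$ lies in none of $[a_{i+1},b_{i+1}],\dotsc,[a_{j-1},b_{j-1}]$; otherwise the level-$t$ strand is absorbed by an intermediate center. So the edges are in bijection with $I_{i,j}$, and therefore the divisor is exactly $f_{\mathcal A}(q)$ (this is \cite[Thm.\,4.3]{SkanNNDCB}).

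Finally, by the hypothesis \eqref{eq:reversalfactor}, $F$ represents $\wtc wq = \sum_{v} P_{v,w}(q) T_v$, using \eqref{eq:klbasis} and \eqref{eq: change of basis}. Comparing coefficients of $T_v$ with \eqref{eq:Gtozhnq} gives $P_{v,w}(q) = \sum_d |\Pi_{v,d}(F)|q^d$, since $\{T_v\}$ is a basis. I expect the only real care to be needed in the bookkeeping that iterated condensation preserves the $\mu$ values and the defect statistics; the latter is already handled inside the proof of Proposition~\ref{p:qfact}.
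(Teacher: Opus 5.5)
Your proposal is correct and follows the same route as the paper: represent the product by the ordinary concatenation $F_{[a_1,b_1]} \circ \cdots \circ F_{[a_m,b_m]}$ via \cite[Cor.\,5.3]{CSkanTNNChar}, pass to $F$ using Proposition~\ref{p:qfact}, and compare coefficients of $T_v$. The paper leaves two steps implicit: the iteration of Proposition~\ref{p:qfact} over all pairs $i<j$, and the identity $\prod_{i<j}[\mu(x_i,x_j)]_q! = f_{\mathcal A}(q)$, for which it cites \cite[Thm.\,4.3]{SkanNNDCB}. You carry out both explicitly, and your derivation of $\mu(x_i,x_j) = |I_{i,j}|$ by following each level-$t$ strand forward from $x_i$ is correct.
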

\begin{proof}
    Let $G = F_{[a_1, b_1]} \circ \cdots \circ F_{[a_m, b_m]}$. By ~\cite[Cor.\,5.3]{CSkanTNNChar}, we have that the coefficient of $T_w$ in $\wtc {s_{[a_1, b_1]}}q \cdots \wtc {s_{[a_m, b_m]}}q$ is
    \begin{equation*}
        \sum_{d \geq 0} |\Pi_{v, d}(G)|q^d.
    \end{equation*}
    By Proposition \ref{p:qfact}, it follows that the network $F$
    %= F_{[a_1, b_1]} \bullet \cdots \bullet F_{[a_m, b_m]}$ 
    and polynomial $f_{\ntnsp\mathcal A}(q)$ \eqref{eq:overlappoly} satisfy 
    \begin{equation*}
        \sum_{d \geq 0} |\Pi_{v, d}(F)|q^d  = \frac 1 {f_{\ntnsp\mathcal A}(q)} \sum_{d \geq 0} |\Pi_{v, d}(G)|q^d.
    \end{equation*}
\end{proof}

Two results providing sufficient conditions on $w$ for a reversal factorization of $\wtc wq$ are the following~\cite[Thm.\,1]{BWHex}, \cite[Thm.\,4.3]{SkanNNDCB}.
\begin{thm}\label{t:BWHex}
If $w \in \sn$ avoids the patterns $321$, $56781234$, $46781235$, $56718234$, $46718235$,
and if $s_{i_1} \cdots s_{i_\ell}$ is a reduced expression for $w$, then we have
$\wtc wq = \wtc {s_{i_1}}q \cdots \wtc {s_{i_\ell}}q$.
\end{thm}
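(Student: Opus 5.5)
The plan follows Billey and Warrington: prove that the product on the right-hand side satisfies the two conditions \eqref{eq:klbasis} that characterize $\wtc wq$, and check the degree condition with Deodhar's defect formula \eqref{eq:Deodefectformula}. Write
\begin{equation*}
P \defeq \wtc{s_{i_1}}q \cdots \wtc{s_{i_\ell}}q = \sum_{v} A_v(q) T_v .
\end{equation*}
Since $\wtc{s}q = \qp12 C'_s$ and each $C'_s$ is bar-invariant, the element $\qm{\ell}{2}P = C'_{s_{i_1}}\cdots C'_{s_{i_\ell}}$ is bar-invariant. By the uniqueness in \eqref{eq:klbasis}, it therefore suffices to prove two things. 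First, $A_w(q)=1$. Second, $A_v(q)=0$ unless $v\le w$, and $\deg A_v(q) < \frac{\ell(w)-\ell(v)}2$ for $v<w$. Granting these, the standard triangularity argument shows that $\qm{\ell}{2}P - C'_w$ is a bar-invariant combination of $C'_v$ with $v<w$ whose coefficients lie in $q^{-1/2}\mathbb Z[q^{-1/2}]$, and such a combination must vanish. The support condition $v\le w$ is immediate from the subword property of Bruhat order.

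\textbf{Length bookkeeping.} Fix a subexpression $\sigma$ with $\sigma_1\cdots\sigma_\ell = v$. Let $E$ be the set of indices $j$ with $\sigma_j=e$, and let $d = d_s + d_e$, where $d_s$ (resp.\ $d_e$) counts the defects $j$ with $\sigma_j = s_{i_j}$ (resp.\ $\sigma_j = e$). A non-defect position with $\sigma_j=s_{i_j}$ raises the length of the partial product by one. A defect position with $\sigma_j=s_{i_j}$ lowers it by one. Positions in $E$ leave it unchanged. Hence
\begin{equation*}
\ell(v) = \ell - |E| - 2d_s, \qquad \ell(w)-\ell(v) = |E| + 2d_s .
\end{equation*}
The required inequality $2d < \ell(w)-\ell(v)$ is therefore equivalent to
\begin{equation*}
2d_e < |E| \quad \text{whenever } E \neq \emptyset.
\end{equation*}
When $E=\emptyset$ the word is reduced, so there are no defects, $v=w$, and this gives $A_w=1$. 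Everything thus reduces to a purely combinatorial claim: in any subexpression of a reduced word for $w$, strictly fewer than half of the skipped positions are defects.

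\textbf{Main step: an injection.} I would work in the wiring diagram \eqref{eq:wirediagprod}, using the Billey--Warrington description of defects. A defect at $x_k$ means that the two wires meeting there have crossed an odd number of times before $x_k$. I would use the following consequences of $321$-avoidance: $w$ is fully commutative, its heap is a planar poset in which every pair of wires of the full reduced diagram crosses at most once, and consecutive occurrences of $s_i$ are separated by exactly one $s_{i-1}$ and one $s_{i+1}$ (or none at the boundary).
\begin{enumerate}
\item For each defect $j\in E$, locate the earlier crossing $x_k$ of the same two paths.
\item Walk through the heap between $x_k$ and $x_j$. Produce a distinct non-defect position $\phi(j)\in E$, namely a skipped vertex where the paths were forced apart to allow them to meet again.
\item Show that $\phi$ is injective and that its image avoids the defect set. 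Also show that $E$ contains at least one non-defect position not of this form (for instance, the first element of $E$, which cannot be a defect).
\end{enumerate}
Together these give $2d_e < |E|$.

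The main obstacle is proving that $\phi$ is injective. Two different defects could try to claim the same skipped vertex, and this happens precisely when the heap contains a large ``hexagon'' region. I would show that any collision forces four nested heap chains, and hence an occurrence of one of the patterns $56781234$, $46781235$, $56718234$, $46718235$. My first attempt would be an induction on $\ell$, deleting a maximal element of the heap and tracking how the injection changes; I would fall back on Billey--Warrington's explicit hexagon analysis if the induction does not close.
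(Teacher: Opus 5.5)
\textbf{Verdict: there is a genuine gap.} Your reduction is correct, but the combinatorial inequality it leads to, which is the whole content of the theorem, is not proved.

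\textbf{What is right.} Your first half is the same Deodhar framework that Billey--Warrington use. The paper gives no proof of its own; it only cites \cite[Thm.\,1]{BWHex}. The product is $q^{\ell(w)/2}$ times a bar-invariant element. By your length count, it equals $\wtc wq$ if and only if every subexpression with $E\neq\emptyset$ satisfies $2d_e<|E|$.

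\textbf{What is missing.} Everything after this reduction \emph{is} the theorem, and the proposal does not prove it. The map $\phi$ is never defined. Its injectivity is explicitly left open. The claim that a collision forces one of $56781234$, $46781235$, $56718234$, $46718235$ is asserted without argument, and it is precisely the hard direction of \cite[Thm.\,1]{BWHex}. The strictness step is also unsupported: the first skipped position is never a defect, but that does not put it outside the image of $\phi$.

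\textbf{A first example: local choices fail.} Take $w=456123$ with reduced word $s_3s_2s_4s_1s_3s_5s_2s_4s_3$ and $\sigma=(s_3,s_2,e,e,s_3,s_5,e,e,e)$.
\begin{itemize}
\item Here $E=\{3,4,7,8,9\}$ and the defects are $7$ and $9$, so $2d_e=4<5$, as it must be.
\item The two paths meeting at $x_9$ last crossed at $x_5$. Between $x_5$ and $x_9$, the left path bounces only at $x_7$, which is itself a defect, and the right path bounces only at $x_8$.
\item For the defect at $7$, the available non-defect bounce points are $3$ and $4$.
\end{itemize}
So a ``skipped vertex where the paths turn'' can itself be a defect. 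Also, the first element $3$ of $E$ is a legitimate image of $\phi$, so it need not be the spare you want.

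\textbf{A second example: the bound is sharp on the hexagon.} Take $46718235$ with word $s_3s_5\,s_2s_4s_6\,s_1s_3s_5s_7\,s_2s_4s_6\,s_3s_5$. Skip the $s_4$ of the second group, the $s_1,s_7$ of the third group, and every letter of the last two groups.
\begin{itemize}
\item This gives $|E|=8$ and $d_e=4$. The defects are the $s_2,s_6$ of the fourth group and the $s_3,s_5$ of the fifth group.
\item Consequently $v=14327658$ receives the term $q^4$ while $\ell(w)-\ell(v)=8$, violating the degree bound.
\item The two top-row defects have the same unique non-defect bounce point, namely the $s_4$ of the fourth group.
\end{itemize}

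\textbf{What a proof must do.} You have to choose $\phi(j)$ non-locally. You then have to show, using the four patterns, that configurations like the hexagon one cannot occur in a $321$-hexagon-avoiding heap. You also have to produce the spare position from the same structure, not from the first skipped letter. That structural heap analysis is the substance of Billey--Warrington's proof, and your proposal defers it rather than supplying it.
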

\begin{thm}\label{t:SkanSmooth}
    If $w \in \sn$ \avoidsp, then $\wtc wq$ has a reversal factorization.
    %there exists a sequence \eqref{eq:intervalseq} of intervals such that
    %we have the factorization \eqref{eq:reversalfactor}.
%    $\wtc wq = \wtc {s_{[a_1,b_1]}}q \cdots \wtc {s_{[a_\ell,b_\ell]}}q$.
\end{thm}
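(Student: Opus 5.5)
We prove Theorem~\ref{t:SkanSmooth} by induction on $n$, peeling off a reversal at the end of a suitable factorization. The strategy is to find, for a smooth $w$, an interval $[a,b]$ and a smooth $u$ with $u \cdot s_{[a,b]} = w$ (or $s_{[a,b]}\cdot u = w$) such that $\wtc uq \wtc{s_{[a,b]}}q$ equals $\wtc wq$ multiplied by a product of $q$-factorials. That product should be exactly the new factors $\prod_{i<m}|I_{i,m}|_q!$ contributed to $f_{\mathcal A}(q)$ when $[a,b]$ is appended to a sequence $\mathcal A$ already factoring $\wtc uq$.

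\emph{Step 1: a combinatorial reduction.} By Lakshmibai--Sandhya, a permutation $w$ \avoidsp\ iff $P_{v,w}=1$ for all $v\le w$. In that case $\wtc wq=\sum_{v\le w}T_v$, and the Bruhat interval $[e,w]$ has rank generating function $\sum_{v\le w}q^{\ell(v)}$ that is palindromic and factors as a product of $q$-integers (Gasharov). Choose $n$'s position: either the letter $n$ or the letter in position $n$ is at the end of a ``decreasing run'' in $w$. More precisely, we use Gasharov's/Ryan's result that $w$ or $w^{-1}$ has the form $w = u\, s_{[a,n]}$ with $u$ smooth, $u$ fixing... Concretely, write $w = w' \cdot c$ where $c=s_{n-1}\cdots s_{a}$-type structure is absorbed into a reversal $s_{[a,b]}$ with $u=w s_{[a,b]}$ satisfying $\ell(w)=\ell(u)+\ell(s_{[a,b]})-\binom{k}{2}$, where $k=|J|$ and $J$ is the overlap of $[a,b]$ with the right descent block of $u$. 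By symmetry under $w\mapsto w^{-1}$ and $w\mapsto w_0ww_0$, both of which preserve the pattern condition and are compatible with the factorization (reversing the interval sequence), we reduce to the case where $n$ occupies a position in a decreasing terminal run of $w$.

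\emph{Step 2: the product identity.} Using Proposition~\ref{p:qfact} and the network interpretation \eqref{eq:Gtozhnq}, we show that the network $F$ for $u$ (inductively a condensed star network) condensed-concatenated with $F_{[a,b]}$ graphically represents $\sum_{v\le w}T_v$. Two things need checking:
(i) every $v\le w$ is the type of exactly one covering path family of $F\bullet F_{[a,b]}$, and that family has zero defects;
(ii) no other types occur.
For (ii), types are bounded above by the Demazure product $u*s_{[a,b]}=w$. For (i), we use the parabolic decomposition $[e,w]=[e,u]\cdot W_{[a,b]}$ modulo $W_J$; this is where smoothness (a BP-decomposition, in Richmond--Slofstra's language) is used.

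\emph{Main obstacle.} The hard step is (i): showing that defects vanish exactly after dividing by $|I_{i,m}|_q!$, i.e.\ that the multiplicities $\mu(x_i,x_m)$ in the condensed network coincide with $|I_{i,m}|$ and absorb all defect contributions at $x_m$. To do this, we will argue that paths entering $x_m$ on distinct edges arrive in increasing order of index. Otherwise some pair would realize a $3412$ or $4231$ pattern in the type, contradicting $v\le w$ together with the smoothness of $w$ via Bruhat-interval pattern containment.
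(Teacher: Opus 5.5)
The paper gives no proof of this theorem; it is quoted from \cite[Thm.\,4.3]{SkanNNDCB}. Your sketch therefore has to stand on its own, and it has a genuine gap exactly where you locate the main obstacle.

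Step~1 never fixes $u$ and $[a,b]$ (the description of $u$ breaks off), and the conditions you do state (smooth $u$, $u\,s_{[a,b]}=w$, $w$ the Demazure product) are not sufficient. Take $u=s_2s_1$ and $[a,b]=[2,3]$. Then $u$ and $w=u\,s_{[2,3]}=s_{[1,3]}$ lie in $\mfs 3$, so both avoid $3412$ and $4231$. Moreover $u=w\,s_{[2,3]}$, $\ell(w)=\ell(u)+1$, $w$ is the Demazure product, and $F_{[2,3]}\bullet F_{[1,2]}\bullet F_{[2,3]}$ has no multiple edges. Yet
\begin{equation*}
\wtc{s_2s_1}q\,\wtc{s_{[2,3]}}q=\wtc{s_{[1,3]}}q+q\,\wtc{s_2}q .
\end{equation*}
The extra term comes from the two path families in which paths $2$ and $3$ cross at $x_1$ and meet again at $x_3$; one of these has type $e$. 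This refutes the mechanism you propose for step (i). Defects belong to path families, not to their types, so a defect at $x_m$ need not create a $3412$ or $4231$ in the type. In any case, smoothness of $w$ does not forbid these patterns below $w$: both $3412$ and $4231$ lie below $4321$. Whatever excludes defects at $x_m$ must come from the specific relation between $u$ and $[a,b]$, which is precisely what your proposal leaves unspecified.

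What is needed is a precise structural lemma, for example Gasharov's: if $w$ avoids $3412$ and $4231$ and $w_d=n$, then, after replacing $w$ by $w^{-1}$ if necessary (your symmetry reduction handles this), $w_d>w_{d+1}>\cdots>w_n$.
\begin{itemize}
\item Moving the letter $n$ to the end gives a smooth $u$ in the same coset as $w$ for the parabolic subgroup $W_L$ generated by $s_d,\dots,s_{n-1}$.
\item $w$ is the maximal element of that coset, and $u$ is maximal in its coset for $W_K$, where $K=\{s_d,\dots,s_{n-2}\}$.
\item Using $T_x\wtc{s_{[d,n]}}q=q^{\ell(x)}\wtc{s_{[d,n]}}q$ for $x\in W_L$, a direct coset computation gives $\wtc uq\,\wtc{s_{[d,n]}}q=[n-d]_q!\,\wtc wq$, with $s_{[d,n]}$ placed on the side on which $W_L$ acts.
\item This identity holds precisely because the minimal $W_L$-coset representatives below $w$ coincide with the minimal $W_K$-coset representatives below $u$. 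That is where the decreasing tail, and hence pattern avoidance, is used, and it makes the path-family analysis of Step~2 unnecessary.
\end{itemize}

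A second gap remains: the factor $\prod_{i<m}[\,|I_{i,m}|\,]_q!$ acquired by appending $[d,n]$ depends on the sequence chosen for $u$, not just on $u$. For instance, $([1,3],[1,2])$ is a valid reversal factorization of $\wtc{s_{[1,3]}}q$. Appending $[1,4]$ contributes only $[1]_q!\,[2]_q!=1+q$, while $\wtc{s_{[1,3]}}q\,\wtc{s_{[1,4]}}q=[3]_q!\,\wtc{s_{[1,4]}}q$. The resulting sequence therefore yields $[3]_q\,\wtc{s_{[1,4]}}q$, not $\wtc{s_{[1,4]}}q$. So the inductive hypothesis ``$\wtc uq$ has some reversal factorization'' is too weak. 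It must be strengthened to control which interval of $u$'s sequence last (in the $w^{-1}$ case, first) meets $[d,n-1]$.
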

The combination of Theorems~\ref{t:BWHex}, \ref{t:SkanSmooth} is not the strongest factorization result possible.
%result on parabolic factorization of Kazhdan--Lusztig basis elements.
Indeed, the known reversal factorization $\wtc{4231}q = \wtc{s_{[1,2]}}q \wtc{s_{[2,4]}}q \wtc{s_{[1,2]}}q$ is guaranteed by neither theorem. We therefore have the following question \cite[Quest.\,4.5]{SkanNNDCB}.
\begin{quest}
    For which $w \in \sn$ does $\wtc wq$ have a reversal factorization?
\end{quest}

\section{Reduction of defects}\label{s:defectremoval} 

%($\star$ Add vertices to diagrams.)

Our defect reduction theorem, Theorem \ref{t:decrease defects by 1},
%main result about defects 
asserts that if a star network can be covered by a path family having $d$ defects, then it can also be covered by a path family of the same type having $d-1$ defects. In certain simple cases, we can easily produce a $(d-1)$-defect family in some set $\Pi_v(F)$ from a $d$-defect family in $\Pi_v(F)$ by swapping a pair of subpaths.
%map from $d$ to $d-1$ defects by changing a pair of paths that meet at an internal vertex and cross to meet but not cross, and changing behavior at previous meetings so that the type is maintained, thus 'undoing' a defect.
% ($\star$ give simple case where this does work, on $F_{s_1} \circ F_{s_1}$)
For example, consider the 
%In the following 
star network and path families
%$F_{[1,2]} \circ F_{[1,2]}$ and path families $\pi$ and $\sigma$ in $\Pi(F_{[1,2]} \circ F_{[1,2]})$,
%and path families, 
\begin{equation}\label{eq:2by2ex}
F_{[1,2]} \circ F_{[1,2]} = 
\begin{tikzpicture}[scale=.7,baseline=25]
  % Define variables
  \pgfmathsetmacro{\k}{2}  % Set k = 4
  \pgfmathsetmacro{\n}{2}  % Set n = 5
  \pgfmathsetmacro{\indexOffset}{0.4}
  % Define x-coordinates
  \pgfmathsetmacro{\xstart}{0}
  \pgfmathsetmacro{\xstep}{0.5}

  % Draw indices on left and right
  \foreach \index in {1, ..., \n} {
    \node at (\xstart - \indexOffset, \index) {$\scriptstyle \index$};
    \node at (\xstart + \xstep*2*\k + \indexOffset, \index) {$\scriptstyle \index$};
  }
  
 %F_[1,2] \circ F_[1,2] =    
   \draw[-,  thick]
    (0,1) -- (0.5,1.5) -- (1,1) -- (1.5,1.5) -- (2,1);
    \draw[-, thick]
    (0,2) -- (0.5,1.5) -- (1,2) -- (1.5,1.5) -- (2,2);
    \fill (.5, 1.5) circle (1mm);  \fill (1.5, 1.5) circle (1mm);
    \fill (0, 1) circle (1mm);  \fill (0, 2) circle (1mm);
    \fill (2, 1) circle (1mm);  \fill (2, 2) circle (1mm);
    \node at (0.55, 1) {$\scriptstyle x_1$};  \node at (1.55, 1) {$\scriptstyle x_2$};
\end{tikzpicture}, 
\qquad
\pi = 
\begin{tikzpicture}[scale=.7,baseline=25]
  % Define variables
  \pgfmathsetmacro{\k}{2}  % Set k = 4
  \pgfmathsetmacro{\n}{2}  % Set n = 5
  \pgfmathsetmacro{\indexOffset}{0.4}
  % Define x-coordinates
  \pgfmathsetmacro{\xstart}{0}
  \pgfmathsetmacro{\xstep}{0.5}

%\pi = 

  % Draw indices on left and right
  \foreach \index in {1, ..., \n} {
    \node at (\xstart - \indexOffset, \index) {$\scriptstyle \index$};
    \node at (\xstart + \xstep*2*\k + \indexOffset, \index) {$\scriptstyle \index$};
  }

% F_[1,2] F_[1,2]   
   \draw[-, ultra thick, dashed]
%\draw[-, very thick, color=red]
    (0,1) -- (0.5,1.5) -- (1,2) -- (1.5,1.5) -- (2,2);
%    \draw[-, very thick]
    \draw[-, very thick, color=blue]    
    (0,2) -- (0.5,1.5) -- (1,1) -- (1.5,1.5) -- (2,1);
     \fill (.5, 1.5) circle (1mm);  \fill (1.5, 1.5) circle (1mm);
    \fill (0, 1) circle (1mm);  \fill (0, 2) circle (1mm);
    \fill (2, 1) circle (1mm);  \fill (2, 2) circle (1mm);
     \node at (0.55, 1) {$\scriptstyle x_1$};  \node at (1.55, 1) {$\scriptstyle x_2$};
\end{tikzpicture}, 
\qquad \sigma = 
\begin{tikzpicture}[scale=.7,baseline=25]
  % Define variables
  \pgfmathsetmacro{\k}{2}  % Set k = 4
  \pgfmathsetmacro{\n}{2}  % Set n = 5
  \pgfmathsetmacro{\indexOffset}{0.4}
  % Define x-coordinates
  \pgfmathsetmacro{\xstart}{0}
  \pgfmathsetmacro{\xstep}{0.5}

  % Draw indices on left and right
  \foreach \index in {1, ..., \n} {
    \node at (\xstart - \indexOffset, \index) {$\scriptstyle \index$};
    \node at (\xstart + \xstep*2*\k + \indexOffset, \index) {$\scriptstyle \index$};
  }

% F_[1,2] F_[1,2]   
   \draw[-, ultra thick, dashed]
%   \draw[-, very thick, color=red]
    (0,1) -- (0.5,1.5) -- (1,1) -- (1.5,1.5) -- (2,2);
%    \draw[-, very thick]
    \draw[-, very thick, color=blue]    
    (0,2) -- (0.5,1.5) -- (1,2) -- (1.5,1.5) -- (2,1);
     \fill (.5, 1.5) circle (1mm);  \fill (1.5, 1.5) circle (1mm);
    \fill (0, 1) circle (1mm);  \fill (0, 2) circle (1mm);
    \fill (2, 1) circle (1mm);  \fill (2, 2) circle (1mm);
     \node at (0.55, 1) {$\scriptstyle x_1$};  \node at (1.55, 1) {$\scriptstyle x_2$};
\end{tikzpicture},
\end{equation}
%we modify a path family having one defect by 
%observe such a result from 
with $\dfct(\pi) = 1$, 
$\sigma$ constructed from $\pi$ by swapping the two $x_1$-to-$x_2$ subpaths of $\pi$,
and $\dfct(\sigma) = 0$.
%changing the crossing at vertex $x_1$ 
%the first internal vertex 
%to a non-crossing.
%, thus producing a path family with no defects,
On the other hand,
%However, 
this simple procedure does not always reduce defects by $1$.
%will not always produce the desired result of reducing the number of defects by $1$. 
Consider the star network and path families of type $1342$
\begin{equation}\label{eq:pi sigma}
F_{[2,4]} \bullet F_{[1,2]} \bullet F_{[2,4]} \bullet F_{[1,2]} = 
\begin{tikzpicture}[scale=.5,baseline=30]
  % Define variables
  \pgfmathsetmacro{\k}{4}  % Set k = 4
  \pgfmathsetmacro{\n}{4}  % Set n = 4
  \pgfmathsetmacro{\indexOffset}{0.4}
  % Define x-coordinates
  \pgfmathsetmacro{\xstart}{0}
  \pgfmathsetmacro{\xstep}{0.5}

  % Draw indices on left and right
  \foreach \index in {1, ..., \n} {
    \node at (\xstart - \indexOffset, \index) {$\scriptstyle \index$};
    \node at (\xstart + \xstep*2*\k + \indexOffset, \index) {$\scriptstyle \index$};
  }
  \node at (1.5, 2.6) {$\scriptstyle _{(2)}$};

   \draw[-, thick]
    (0,4) -- (0.5,3) -- (1,2) -- (2,1) -- (3,1) -- (3.5,1.5)  -- (4,2);

    \draw[-, thick]
    (0,3) -- (0.5,3) -- (2.5,3) -- (3,4) -- (4,4);

    \draw[-, thick]
    (0,2) -- (0.5,3) -- (4,3);

    \draw[-, thick]
    (0,1) -- (1,1) -- (2.5,3) -- (4,1);

    % \draw[-, thick]
    % (0,5) -- (0.5,5) -- (1,5) -- (1.5,4.5) -- (2,5) -- (2.5,5) -- (3,5) -- (3.5,4.5) -- (4,5);
   % \draw[-, thick]
   %  (0,1) -- (0.5,2.5) -- (1,1) -- (1.5,1) -- (2,1) -- (2.5,2.5) -- (3,1) -- (3.5,1) -- (4,1);

   %  \draw[-, thick]
   %  (0,2) -- (0.5,2.5) -- (1,2) -- (1.5,2) -- (2,2) -- (2.5,2.5) -- (3,2) -- (3.5,2) -- (4,2);

   %  \draw[-, thick]
   %  (0,3) -- (0.5,2.5) -- (1,3) -- (1.5,3) -- (2,3) -- (2.5,2.5) -- (3,3) -- (3.5,3) -- (4,3);

   %  \draw[-, thick]
   %  (0,4) -- (0.5,2.5) -- (1,4) -- (1.5,4.5) -- (2,4) -- (2.5,2.5) -- (3,4) -- (3.5,4.5) -- (4,4);

   %  \draw[-, thick]
   %  (0,5) -- (0.5,5) -- (1,5) -- (1.5,4.5) -- (2,5) -- (2.5,5) -- (3,5) -- (3.5,4.5) -- (4,5);

\fill (0, 1) circle (1.3mm);  \fill (0, 2) circle (1.3mm); \fill (0, 3) circle (1.3mm);  \fill (0, 4) circle (1.3mm);
\fill (4, 1) circle (1.3mm);  \fill (4, 2) circle (1.3mm); \fill (4, 3) circle (1.3mm);  \fill (4, 4) circle (1.3mm);
\fill (0.5, 3) circle (1.3mm);  \fill (1.43, 1.6) circle (1.3mm); \fill (2.5, 3) circle (1.3mm);  \fill (3.57, 1.6) circle (1.3mm);
   
\node at (0.75, 3.5) {$\scriptstyle x_1$};  
\node at (1.5, 0.85) {$\scriptstyle x_2$}; 
\node at (2.25, 3.5) {$\scriptstyle x_3$};
\node at (3.45, 0.85) {$\scriptstyle x_4$};
\end{tikzpicture}, 
\quad\pi = 
\begin{tikzpicture}[scale=.5,baseline=30]
  % Define variables
  \pgfmathsetmacro{\k}{4}  % Set k = 4
  \pgfmathsetmacro{\n}{4}  % Set n = 5
  \pgfmathsetmacro{\indexOffset}{0.4}
  % Define x-coordinates
  \pgfmathsetmacro{\xstart}{0}
  \pgfmathsetmacro{\xstep}{0.5}

  % Draw indices on left and right
  \foreach \index in {1, ..., \n} {
    \node at (\xstart - \indexOffset, \index) {$\scriptstyle \index$};
    \node at (\xstart + \xstep*2*\k + \indexOffset, \index) {$\scriptstyle \index$};
  }
%  Colors:
% https://www.overleaf.com/learn/latex/Using_colors_in_LaTeX
   \node at (1.5, 2.6) {$\scriptstyle{_{(2)}}$};

   \draw[-, ultra thick, dashed]
%   \draw[-, ultra thick, color=red]   
    (0,4) -- (0.5,2.9) -- (1,2) -- (2,1) -- (3,1) -- (3.5,1.5)  -- (4,2);

    \draw[-, ultra thick, color=green]
    (0,3) -- (0.5,3.1) -- (2.5,3.1) -- (3,4) -- (4,4);

    \draw[-, thick, dotted]
%    \draw[-, ultra thick, color=orange]    
    (0,2) -- (0.5,3) -- (4,3);

    \draw[-, ultra thick, color=blue]
    (0,1) -- (1,1) -- (2.5,2.9) -- (4,1);

\fill (0, 1) circle (1.3mm);  \fill (0, 2) circle (1.3mm); \fill (0, 3) circle (1.3mm);  \fill (0, 4) circle (1.3mm);
\fill (4, 1) circle (1.3mm);  \fill (4, 2) circle (1.3mm); \fill (4, 3) circle (1.3mm);  \fill (4, 4) circle (1.3mm);
\fill (0.5, 3) circle (1.3mm);  \fill (1.43, 1.6) circle (1.3mm); \fill (2.5, 3) circle (1.3mm);  \fill (3.57, 1.6) circle (1.3mm);
   
\node at (0.75, 3.5) {$\scriptstyle x_1$};  
\node at (1.5, 0.85) {$\scriptstyle x_2$}; 
\node at (2.25, 3.5) {$\scriptstyle x_3$};
\node at (3.45, 0.85) {$\scriptstyle x_4$};
   
\end{tikzpicture}, \quad
\sigma = 
\begin{tikzpicture}[scale=.5,baseline=30]
  % Define variables
  \pgfmathsetmacro{\k}{4}  % Set k = 4
  \pgfmathsetmacro{\n}{4}  % Set n = 5
  \pgfmathsetmacro{\indexOffset}{0.4}
  % Define x-coordinates
  \pgfmathsetmacro{\xstart}{0}
  \pgfmathsetmacro{\xstep}{0.5}

  % Draw indices on left and right
  \foreach \index in {1, ..., \n} {
    \node at (\xstart - \indexOffset, \index) {$\scriptstyle \index$};
    \node at (\xstart + \xstep*2*\k + \indexOffset, \index) {$\scriptstyle \index$};
  }
%  Colors:
% https://www.overleaf.com/learn/latex/Using_colors_in_LaTeX
    \node at (1.5, 2.6) {$\scriptstyle{_{(2)}}$};

    \draw[-, ultra thick, dashed]
%   \draw[-, ultra thick, color=red]   
    (0,4) -- (0.5,2.9) -- (1,2) -- (1.5, 1.5) -- (2.5, 2.9) -- (3.5, 1.5) -- (4,2);

    \draw[-, ultra thick, color=green]
    (0,3) -- (0.5,3.1) -- (2.5,3.1) -- (3,4) -- (4,4);

    \draw[-, thick, dotted]
%    \draw[-, ultra thick, color=orange]    
    (0,2) -- (0.5,3) -- (4,3);

    \draw[-, ultra thick, color=blue]
    (0,1) -- (1,1) -- (1.5, 1.5) -- (2, 1) -- (3, 1) -- (3.5, 1.5) -- (4,1);

\fill (0, 1) circle (1.3mm);  \fill (0, 2) circle (1.3mm); \fill (0, 3) circle (1.3mm);  \fill (0, 4) circle (1.3mm);
\fill (4, 1) circle (1.3mm);  \fill (4, 2) circle (1.3mm); \fill (4, 3) circle (1.3mm);  \fill (4, 4) circle (1.3mm);
\fill (0.5, 3) circle (1.3mm);  \fill (1.43, 1.6) circle (1.3mm); \fill (2.5, 3) circle (1.3mm);  \fill (3.57, 1.6) circle (1.3mm);
   
\node at (0.75, 3.5) {$\scriptstyle x_1$};  
\node at (1.5, 0.85) {$\scriptstyle x_2$}; 
\node at (2.25, 3.5) {$\scriptstyle x_3$};
\node at (3.45, 0.85) {$\scriptstyle x_4$};

\end{tikzpicture},
\end{equation}
with $\dfct(\pi) = 1$, and $\sigma$ constructed from $\pi$ by
swapping the $x_2$-to-$x_4$ subpaths of
%changing the red and blue paths, 
$\pi_1$ and $\pi_4$.  The swap
%respectively, from crossing to not crossing in $x_k$, 
eliminates the defect $(\pi_1, \pi_4, 4)$,
but introduces two more:
%while introducing 2 new defects: 
\begin{equation}\label{eq:2 sigma defects}
    (\sigma_2, \sigma_4, 3), \quad (\sigma_3, \sigma_4, 3).
\end{equation}
Thus we have $\dfct(\sigma) = 2$.
There is in fact a path family 
\begin{equation}\label{eq:tau}
\tau = 
\begin{tikzpicture}[scale=.5,baseline=30]
  % Define variables
  \pgfmathsetmacro{\k}{4}  % Set k = 4
  \pgfmathsetmacro{\n}{4}  % Set n = 5
  \pgfmathsetmacro{\indexOffset}{0.4}
  % Define x-coordinates
  \pgfmathsetmacro{\xstart}{0}
  \pgfmathsetmacro{\xstep}{0.5}

  % Draw indices on left and right
  \foreach \index in {1, ..., \n} {
    \node at (\xstart - \indexOffset, \index) {$\scriptstyle \index$};
    \node at (\xstart + \xstep*2*\k + \indexOffset, \index) {$\scriptstyle \index$};
  }
%  Colors:
% https://www.overleaf.com/learn/latex/Using_colors_in_LaTeX
   \node at (1.5, 2.4) {$\scriptstyle _{(2)}$};

   \draw[-, ultra thick, dashed]
%\draw[-, ultra thick, color=red]
    (0,4) -- (0.5, 3.1) -- (2.5, 3.1)  -- (3.5, 1.5) -- (4,2);

    \draw[-, ultra thick, color=green]
    (0,3) -- (2.5,3) -- (3,4) -- (4,4);

    \draw[-, thick, dotted]
%\draw[-, ultra thick, color=orange]
    (0,2) -- (0.5, 2.9) -- (1, 2) -- (1.5, 1.5) -- (2.5, 3)  -- (4,3);

    \draw[-, ultra thick, color=blue]
    (0,1) -- (1,1) -- (1.5, 1.5) -- (2, 1) -- (3, 1) -- (3.5, 1.5) -- (4,1);

\fill (0, 1) circle (1.3mm);  \fill (0, 2) circle (1.3mm); \fill (0, 3) circle (1.3mm);  \fill (0, 4) circle (1.3mm);
\fill (4, 1) circle (1.3mm);  \fill (4, 2) circle (1.3mm); \fill (4, 3) circle (1.3mm);  \fill (4, 4) circle (1.3mm);
\fill (0.5, 3) circle (1.3mm);  \fill (1.43, 1.6) circle (1.3mm); \fill (2.5, 3) circle (1.3mm);  \fill (3.57, 1.6) circle (1.3mm);
   
\node at (0.75, 3.5) {$\scriptstyle x_1$};  
\node at (1.5, 0.85) {$\scriptstyle x_2$}; 
\node at (2.25, 3.5) {$\scriptstyle x_3$};
\node at (3.45, 0.85) {$\scriptstyle x_4$};
   
\end{tikzpicture}
\end{equation}
of type $1342$ satisfying 
%$\type(\tau) = \type(\pi)$ and 
$\dfct(\tau) = \dfct(\pi)-1 = 0$, but the naive approach above does not produce it from $\pi$.
%of the same type that has $0$ defects.

%To describe the process of "removing" one defect from a path family, we begin
To describe the process of reducing defects in a path family by exactly $1$, we begin
by stating a map which modifies a path family by removing a defect at a specified vertex,
%$x_k$ for $k \geq 2$,
possibly creating earlier defects.
%Since a defect does not take place in the initial star of $F$, we only consider $k\geq 2$. 
For 
\begin{equation}\label{eq:starnetwork}
    F = F_{[a_1,b_1]} \cdots F_{[a_m,b_m]} \in \ocnet n \quad \text{having internal vertices } \ctr 1, \dotsc, \ctr m
\end{equation}
and $k\in\{2, \dotsc, m\}$, 
%we define a function 
define
\begin{equation}\label{eq:phidef}
\begin{aligned}
    \phi_k : \{\pi \in \Pi(F) \,|\, \pi \text{ has a defect at } x_k
    %of the form }(\pi_i, \pi_j, k) \text{ for some }  i, j
    \} &\rightarrow \Pi(F)\\
    \pi &\mapsto \hat \pi\end{aligned}
    \end{equation}
by the following algorithm.
\begin{alg}\label{a:remove_early_defect_random_k}
    Given star network 
    $F$ as in \eqref{eq:starnetwork},
    % = F_{[a_1, b_1]} \bullet %F_{[a_2, b_2]} \bullet 
    % \cdots \bullet F_{[a_m, b_m]} \in \cnet n$
    %$w \in \sn$, $d \geq 1$ 
    path family $\pi \in \Pi(F)$,
    and index $k$ such that $\pi$ has a defect at $\ctr k$,
    %for some $w \in \sn$, $d \geq 1$, and $k \in \{2, \ldots, m\}$ where there exists some defect of the form $(\pi_i, \pi_j, k)$,
    do 
    \begin{enumerate}
    \item Let ($r, t$) be the lexicographically least pair such that $(\pi_r, \pi_t, k)$ is a defect.
    \item %Fix $\pi_t$ and 
    %Choose path $\pi_s$ where 
    Let $s$ be the largest index such that $\pi_s$ enters vertex $\ctr k$
    %star $F_{[a_k, b_k]}$ 
    through the same edge as $\pi_r$ and $(\pi_s, \pi_t, k)$ is a defect.
    \item Let $\ctr \ell$ be the final vertex in the %earliest 
    rightmost crossing of 
    %$\pi_s \cap \pi_t$ where 
    $\pi_s$ and $\pi_t$ prior to $\ctr k$. 
    %$F_l = F_{[a_l, b_l]}$ be the star where $\pi_s$ and $\pi_t$ 
    %first cross.
    %(what does this mean for condensed concatenations? this would be the same with regular concatenations, right? Perhaps we need to clarify that paths are in parallel when they share an edge.), which must occur earlier than the defect at $F_k = F_{[a_k, b_k]}$. 
    \item Create a new path family $%\phi_k(\pi) =
    \hat \pi = (\hat \pi_1, \dotsc, \hat \pi_n)$ by \begin{enumerate}
        \item $\hat \pi_i = \pi_i$ if $i \not \in \{s,t\}$.
        \item $\hat \pi_s$ is $\pi_s$ with the
        $\ctr \ell$-to-$\ctr k$ subpath replaced by that of $\pi_t$.
        %, $\pi_t$ from $\ctr l$ to $\ctr k$, and $\pi_s$ from $\ctr k$ to sink $s$.
        %is $\pi_t$ between central vertices of stars $F_l$ and $F_k$ and is $\pi_s$ otherwise.
        \item $\hat \pi_t$ is $\pi_t$ with the
        $\ctr \ell$-to-$\ctr k$ subpath replaced by that of $\pi_s$.
        %$\pi_s$ between central vertices of stars $F_l$ and $F_k$ and is $\pi_t$ otherwise.
       \end{enumerate}
    \end{enumerate}
\end{alg}

For example, applying $\phi_3$ to the path family $\sigma$ in \eqref{eq:pi sigma}, we consider the defects \eqref{eq:2 sigma defects} and set $(r, t) = (2, 4)$. Now we consider the paths entering $\ctr 3$ through the same edge as $\sigma_2$ and also creating a defect at $\ctr 3$ with $\sigma_4$. Of these, $\sigma_3$ has the greatest index and we set $s = 3$. Prior to $\ctr 3$, the rightmost crossing of $\sigma_3$ and $\sigma_4$ is the vertex $\ctr 1$, so we set $\ell = 1$. We create a new path family 
$\hat \sigma  = (\hat \sigma_1, \hat \sigma_2, \hat \sigma_3, \hat \sigma_4)$ by swapping the $\ctr 1$-to-$\ctr 3$ subpaths of $\sigma_3$ and $\sigma_4$,

% we obtain $\sigma$ in \eqref{eq:pi sigma}.

\begin{equation*}
\hat \sigma = \phi_3(\sigma) = 
\begin{tikzpicture}[scale=.5,baseline=30]
  % Define variables
  \pgfmathsetmacro{\k}{4}  % Set k = 4
  \pgfmathsetmacro{\n}{4}  % Set n = 5
  \pgfmathsetmacro{\indexOffset}{0.4}
  % Define x-coordinates
  \pgfmathsetmacro{\xstart}{0}
  \pgfmathsetmacro{\xstep}{0.5}

  % Draw indices on left and right
  \foreach \index in {1, ..., \n} {
    \node at (\xstart - \indexOffset, \index) {$\scriptstyle \index$};
    \node at (\xstart + \xstep*2*\k + \indexOffset, \index) {$\scriptstyle \index$};
  }
%  Colors:
% https://www.overleaf.com/learn/latex/Using_colors_in_LaTeX
    \node at (1.5, 2.6) {$\scriptstyle{_{(2)}}$};

    \draw[-, ultra thick, dashed]
%   \draw[-, ultra thick, color=red]   
    (0,4) -- (0.5,3.15) -- (2.5, 3.15) -- (3.5, 1.5) -- (4,2);

    \draw[-, ultra thick, color=green]
    (0,3) -- (0.5,2.9) -- (1,2) -- (1.5, 1.5) -- (2.5,2.9) -- (3,4) -- (4,4);

    \draw[-, thick, dotted]
%    \draw[-, ultra thick, color=orange]    
    (0,2) -- (0.5,3) -- (4,3);

    \draw[-, ultra thick, color=blue]
    (0,1) -- (1,1) -- (1.5, 1.5) -- (2, 1) -- (3, 1) -- (3.5, 1.5) -- (4,1);

\fill (0, 1) circle (1.3mm);  \fill (0, 2) circle (1.3mm); \fill (0, 3) circle (1.3mm);  \fill (0, 4) circle (1.3mm);
\fill (4, 1) circle (1.3mm);  \fill (4, 2) circle (1.3mm); \fill (4, 3) circle (1.3mm);  \fill (4, 4) circle (1.3mm);
\fill (0.5, 3) circle (1.3mm);  \fill (1.43, 1.6) circle (1.3mm); \fill (2.5, 3) circle (1.3mm);  \fill (3.57, 1.6) circle (1.3mm);
   
\node at (0.75, 3.5) {$\scriptstyle x_1$};  
\node at (1.5, 0.85) {$\scriptstyle x_2$}; 
\node at (2.25, 3.5) {$\scriptstyle x_3$};
\node at (3.45, 0.85) {$\scriptstyle x_4$};

\end{tikzpicture}.
\end{equation*}
We leave as an exercise for the reader the verification that the path families $\pi, \sigma, \hat \sigma, \tau$ are related by $\sigma = \phi_4(\pi)$, $\tau = \phi_3(\hat \sigma)$.

% ($\star$ Can we call the algorithm(s) some function $\phi_k$?)
It is straightforward to verify that the path families $\pi$, $\hat \pi$ in Algorithm~\ref{a:remove_early_defect_random_k} have the same type, and that their defects at $x_{k+1},\dotsc,x_m$ correspond bijectively.
%The path family $\hat \pi$ produced by Algorithm~\ref{a:remove_early_defect} produces a path family $\hat \pi$ which has the 
\begin{prop}\label{p: one less defect condensed-adjusted}
Given star network 
    $F$ as in \eqref{eq:starnetwork}, 
    %$w \in \sn$, $d \geq 1$ 
    path family $\pi \in \Pi(F)$
    and index $k$ such that $\pi$ has a defect at $\ctr k$,
    % Fix $F \in \cnet n$, $w \in \sn$, $\pi \in \Pi_w(F)$, and index $k$ such that $\pi$ has a defect of the form $(\pi_i, \pi_j, k)$.
    the path family $\hat\pi = \phi_k(\pi)$ satisfies
    \begin{enumerate}
        \item $\type(\hat \pi) = \type(\pi)$,
        \item for each $p > k$, we have $\{(i,j) \,|\, (\hat\pi_i, \hat\pi_j, p) \text{ defective}\} = \{(i,j) \,|\, (\pi_i, \pi_j, p) \text{ defective}\}$.
\end{enumerate}
\end{prop}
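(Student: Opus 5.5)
The plan is to show first that $\hat\pi$ is a well-defined path family covering $F$, and then to read off both claims from the fact that the swap is local: it changes only the $\ctr\ell$-to-$\ctr k$ portions of two paths.

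\emph{Well-definedness.} Since $(\pi_s,\pi_t,k)$ is a defect, $\pi_s$ and $\pi_t$ meet at $\ctr k$ after an odd number of crossings. Hence at least one crossing precedes $\ctr k$, and the rightmost one has a final vertex $\ctr\ell$ with $\ell<k$. Both $\pi_s$ and $\pi_t$ pass through $\ctr\ell$ and through $\ctr k$. The $\ctr\ell$-to-$\ctr k$ subpath of each is therefore a directed path in $F$ between the same two vertices, so exchanging these subpaths yields genuine source-to-sink paths. The multiset of edges used by $\hat\pi$ equals that used by $\pi$, so every multiplicity-$p$ edge still lies in exactly $p$ paths, and $\hat\pi\in\Pi(F)$.

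\emph{Type.} Path $\hat\pi_s$ agrees with $\pi_s$ after $\ctr k$, so it ends at the same sink; likewise for $\hat\pi_t$. All other paths are unchanged. Hence $\type(\hat\pi)=\type(\pi)$, which is (1).

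\emph{Defects after $\ctr k$.} Fix $p>k$. By Lemma~\ref{l: alt defect defn}, $(\hat\pi_i,\hat\pi_j,p)$ is defective exactly when $i<j$, the two paths meet at $\ctr p$, and $\hat\pi_j\prec_p\hat\pi_i$.
\begin{itemize}
\item Whether two paths pass through $\ctr p$, and on which entering edge, depends only on their portions after $\ctr k$. These portions are identical in $\pi$ and $\hat\pi$, including the labels $s,t$, since the tails were not exchanged.
\item So the relations $\prec_p$ and $\sim_p$ on indexed paths coincide for $\pi$ and $\hat\pi$.
\end{itemize}
One subtlety needs checking: ``meeting'' is defined via the initial vertex of a component of the intersection. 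A component of $\hat\pi_i\cap\hat\pi_j$ starting at $\ctr p$ might extend backward into the modified region. Concretely, $\ctr p$ is the start of a component exactly when the two paths enter $\ctr p$ on different edges. That condition is local to the entering edges, which lie after $\ctr k$ when $p>k$. The one exception is an edge from $\ctr k$ itself into $\ctr p$; but that edge is part of the unchanged tail, because the swap ends at the vertex $\ctr k$. So meeting at $\ctr p$ is also preserved.

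Together these give the equality of defect index sets in (2). I expect the main obstacle to be stating this locality cleanly for condensed networks with multiplicity edges, where paths may share edges. The point to verify is that the swap at $\ctr\ell$ and $\ctr k$ happens at vertices, not in the middle of a shared edge. This holds because $\ctr\ell$ and $\ctr k$ are internal vertices lying on both paths.
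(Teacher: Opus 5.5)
Your proof is correct and follows the same route as the paper. Part (1) holds because the swap alters only the $x_\ell$-to-$x_k$ portions of $\pi_s$ and $\pi_t$. Part (2) holds because every path's portion after $x_k$, together with its label, is unchanged, so the entering-edge order at each $x_p$ with $p>k$ is the same and Lemma~\ref{l: alt defect defn} applies. Your extra checks that $\hat\pi$ still covers $F$ and that ``meeting at $x_p$'' is preserved are details the paper leaves implicit.
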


%($\star$ Finished proof Monday October 27. Reorganized proof Monday November 17.)  

% ($\star$ Make a decision on Monday November 3 where to put the definition of this preorder). Would it help to define a preorder $\precsim$ on the paths entering some vertex $x$ by $\pi_i \precsim \pi_j$ if $\pi_i$ enters vertex $x$ at or below where $\pi_j$ enters $x$, and $\pi_i \prec \pi_j$ if the entry edges for the two paths are distinct? Note that this isn't a partial order: $\pi_i \precsim \pi_j$ and $\pi_j \precsim \pi_i$ do not imply that $\pi_i = \pi_j$; they imply that $\pi_i \sim \pi_j$, i.e., that $\pi_i$ and $\pi_j$ enter $x_k$ on the same edge.)

\begin{proof}
    %\noindent 
    
    %It is clear that we have $\hat\pi \in \Pi_w(F)$, since $\pi \in \Pi_w(F)$, and 
     %Since 
    (1) Since $\hat \pi$ differs from $\pi$ only in subpaths between interior vertices $\ctr \ell$ and $x_k$, we have that for $i =1, \dotsc, n$,
    path $\hat \pi_i$ has the same source and sink vertices as $\pi_i$.  
    %we have
    %$\type(\hat \pi) = \type(\pi)$.
    %Since $\hat \pi$ covers $F$ we have $\hat \pi \in \Pi_w(F)$.  ($\star$ necessary?)
%    , $\hat\pi_i = \pi_i$ for $i \notin \{s, t\}$, and $\hat\pi_i, \pi_i$ both terminate at sink $w_i$ of $F$ for $i\in\{s, t\}$.

(2) %Suppose first that $\{i,j\} = \{s,t\}$.
%The component of $\pi_s \cap \pi_t$ containing $\ctr l$ is a crossing, while the component of $\hat\pi_s \cap \hat\pi_t$ containing $\ctr l$ is a noncrossing;
%the component of $\pi_s \cap \pi_t$ containing $\ctr k$ is a crossing if and only if the component
%of $\hat\pi_s \cap \hat\pi_t$ containing $\ctr k$ is a noncrossing.
%Thus each later component of $\pi_s \cap \pi_t$
%is preceded by an odd number of crossings if and only if
%the corresponding component of $\hat \pi_s \cap \hat \pi_t$ is as well.
%Now suppose that $\{i,j \}\cap  \{s,t\} = \emptyset$.  Since $\hat \pi_i = \pi_i$, $\hat \pi_j = \pi_j$, every triple $(\pi_i, \pi_j, p)$ is defective if and only if $(\hat \pi_i, \hat \pi_j, p)$ is.  Now suppose that $|\{i,j\} \cap \{s,t\}| = 1$.  ($\star$ Explain why the relevant pairs triples are both defective or both nondefective.)
Since the restrictions of $\hat\pi$ 
%is identical to 
and $\pi$ to $F_{[a_{k+1}, b_{k+1}]} \cdots F_{[a_m, b_m]}$ are identical, we have for each triple $(i,j,p)$ with $i < j$ and $p > k$ that $\pi_i$ enters $\ctr p$ above $\pi_j$ if and only if $\hat \pi_i$ enters $\ctr p$ above $\hat \pi_j$.
By Lemma~\ref{l: alt defect defn}, $(\pi_i,\pi_j,p)$ is defective if and only if $(\hat\pi_i, \hat\pi_j, p)$ is.
\end{proof}
%and the complementary truncations satisfy 
%we have 
%$\type(\trunc k(\pi)) = \type(\trunc k(\hat\pi))$, Lemma~\ref{l: dfct history} or Lemma~\ref{l:laterdefects} implies that
%, then 
%all defects of $\hat\pi$ on the latter portion of $F$ correspond bijectively with defects of $\pi$ there.
%($\star$ Restate Lemma~\ref{l:laterdefects} and/or Lemma~\ref{l: dfct history} and apply here?)
%\textcolor{blue}{Since $\pi$ and $\hat \pi$ differ by one crossing of paths indexed by $s$, $t$ on $F_1 \bullet \cdots \bullet F_k$, doesn't each defective meeting of $\pi_s$ and $\pi_t$ after $F_k$ become a nondefective meeting of $\hat \pi_s$ and $\hat\pi_t$, and vice versa?}

    %\noindent 
    On the other hand, path families $\pi$, $\hat \pi$ 
    %\eqref{eq: phi_k}  
    need not have a bijective correspondence of defects occuring before $x_k$.
    For example in \eqref{eq:pi sigma} -- \eqref{eq:2 sigma defects}, the path family $\pi$ has 0 defects at $\ctr 3$ while $\sigma = \phi_4(\pi)$ has 2.
    
    We will show in Proposition~\ref{p:remove one defect} that
    $\hat \pi$ has exactly one fewer defect at $x_k$ than $\pi$ does.  The ``extra" defect of $\pi$ is easy to describe.    
%($\star$ Need to polish the following to introduce partitions of defects, and define $\psi$.) 
  Let 
  %$r$, 
  $s$, $t$
%, $l$ 
be the numbers computed in steps 1 -- 2 of 
Algorithm~$\ref{a:remove_early_defect_random_k}$ and order the paths entering $\ctr k$ by $\prec$ as in Section 3.
%the algorithm.
Then 
%$r$, 
$s$, $t$ satisfy 
\begin{equation}\label{eq:st}
%r \leq 
s < t, 
\qquad
\pi_t \prec 
%\pi_r, 
\pi_s,
\qquad
%and
%with the two paths meeting defectively;
%$\pi_s$ and $\pi_t$ meet defectively at $\ctr k$, with 
\hat\pi_s \prec \hat\pi_t,
\end{equation}
and by Lemma~\ref{l: alt defect defn}, the triple
$(\pi_s, \pi_t, k)$ is defective and the triple $(\hat\pi_s,\hat\pi_t,k)$ is not.
We claim that other defects of $\pi$ at $\ctr k$ correspond bijectively to all defects of $\hat \pi$ at $\ctr k$.
First, it is easy to see that for $i, j \not\in \{ s, t\}$, the triple
$(\pi_i,\pi_j,k)$ is defective if and only if the triple $(\hat \pi_i, \hat\pi_j, k)$ is:
in this case we have $\hat \pi_i = \pi_i$ and $\hat \pi_j = \pi_j$.
Next, we look closely
at defects $(\pi_i, \pi_j, k)$ and $(\hat \pi_i, \hat \pi_j, k)$ 
%We claim that defects of $\pi$ at $\ctr k$ different from $(\pi_s, \pi_t, k)$
%correspond bijectively to defects of $\hat\pi$ at $\ctr k$. 
%
%We therefore consider such defects %$(\pi_i,\pi_j,k)$, $(\hat\pi_i,\hat\pi_j,k)$ 
with exactly one of the indices $\{i, j\}$ belonging to $\{s,t\}$,
\begin{equation}\label{eq:ijst3}
    |\{i,j\}\cap\{s,t\}|=1,
\end{equation}
and show that these correspond bijectively. Call these sets of defects $\D(\pi)$ and $\D(\hat\pi)$, respectively,
and define four subsets of each:
%as follows.
\begin{equation}\label{eq:ABCC}
    \begin{aligned}
        \A &\defeq \{ (\pi_i,\pi_j,k) \in \D(\pi) \,|\, \pi_j \prec \pi_t \}, \qquad
        \A' \defeq \{ (\hat \pi_i,\hat \pi_j,k) \in \D(\hat \pi) \,|\, \hat \pi_j \prec \hat \pi_s \},\\
        \B &\defeq \{ (\pi_i,\pi_j,k) \in \D(\pi) \,|\, \pi_s \prec \pi_i \}, \qquad
        \B' \defeq \{ (\hat\pi_i,\hat\pi_j,k) \in \D(\hat\pi) \,|\, \hat\pi_t \prec \hat\pi_i \},\\
        \C_1 &\defeq \{ (\pi_i,\pi_j,k) \in \D(\pi) \,|\, \pi_t\precsim\pi_j\prec \pi_i=\pi_s \text{ and } \pi_t\neq \pi_j \},\\
        &\qquad\qquad\qquad 
        \C'_1 \defeq \{ (\hat\pi_i,\hat\pi_j,k) \in \D(\hat\pi) \,|\, \hat\pi_s\precsim\hat\pi_j\prec \hat\pi_i=\hat\pi_t \text{ and } \hat\pi_s\neq \hat\pi_j \},\\
        \C_2 &\defeq \{ (\pi_i,\pi_j,k) \in \D(\pi) \,|\, \pi_t=\pi_j\prec \pi_i\precsim \pi_s \text{ and } \pi_i\neq \pi_s \},\\
        &\qquad\qquad\qquad 
        \C'_2 \defeq \{ (\hat\pi_i,\hat\pi_j,k) \in \D(\hat\pi) \,|\, \hat\pi_s=\hat\pi_j\prec \hat\pi_i\precsim \hat\pi_t \text{ and } \hat\pi_i\neq \hat\pi_t \}.
%        \pi_t\precsim\pi_j\prec \pi_i=\pi_s \text{ and } \pi_t\neq \pi_j \},\\
    \end{aligned}
\end{equation}
\begin{lem}\label{l:ABCD}
    The sets \eqref{eq:ABCC} partition $\D(\pi)$ and $\D(\hat\pi)$ into disjoint blocks:
    %.  In particular we have
    \begin{equation}\label{eq:blocks}
        \D(\pi) = \A \uplus \B \uplus \C_1 \uplus \C_2,
        \qquad
        \D(\hat\pi) = \A' \uplus \B' \uplus \C'_1 \uplus \C'_2.
    \end{equation}
\end{lem}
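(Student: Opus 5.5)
The proof is a direct case analysis at the single vertex $\ctr k$. It uses only the order facts \eqref{eq:st}, Lemma~\ref{l: alt defect defn}, and the properties of $\prec$, $\sim$, $\precsim$ in the Observation of Section~\ref{s:hnqplanar}. By definition each of $\A,\B,\C_1,\C_2$ is a subset of $\D(\pi)$. So it suffices to show that every element of $\D(\pi)$ lies in at least one block (exhaustiveness) and in at most one block (disjointness). The primed statement for $\D(\hat\pi)$ is the same argument with the roles of $s$ and $t$ exchanged. This works because \eqref{eq:st} gives $\hat\pi_s \prec \hat\pi_t$, the exact mirror of $\pi_t \prec \pi_s$, and the primed blocks are obtained from the unprimed ones by this exchange.

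\emph{Exhaustiveness.} Take $(\pi_i,\pi_j,k) \in \D(\pi)$. By Lemma~\ref{l: alt defect defn} we have $i<j$ and $\pi_j \prec \pi_i$, and by \eqref{eq:ijst3} exactly one of $i,j$ lies in $\{s,t\}$. I split into four cases according to which index it is, using property (3) of the Observation (either $\pi_a \prec \pi_b$ or $\pi_b \precsim \pi_a$).
\begin{itemize}
\item If $i=s$, then $\pi_j \prec \pi_s$. Either $\pi_j \prec \pi_t$, giving $\A$, or $\pi_t \precsim \pi_j$ with $j \neq t$, giving $\C_1$.
\item If $j=t$, then $\pi_t \prec \pi_i$. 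Either $\pi_s \prec \pi_i$, giving $\B$, or $\pi_i \precsim \pi_s$ with $i\neq s$, giving $\C_2$.
\item If $i=t$, then $\pi_j \prec \pi_t$, giving $\A$.
\item If $j=s$, then $\pi_s \prec \pi_i$, giving $\B$.
\end{itemize}
The relations $i \neq s$ and $j \neq t$ used above follow from \eqref{eq:ijst3}.

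\emph{Disjointness.} I check the six pairs of blocks, each time deriving a contradiction with irreflexivity of $\prec$, with $\pi_t \prec \pi_s$, or with \eqref{eq:ijst3}.
\begin{itemize}
\item $\A\cap\B$. In each of the four cases one condition fails. If $i=s$, then $\B$ requires $\pi_s\prec\pi_s$. If $j=t$, then $\A$ requires $\pi_t\prec\pi_t$. If $i=t$, then $\B$ requires $\pi_s\prec\pi_t$, contradicting $\pi_t\prec\pi_s$. If $j=s$, then $\A$ requires $\pi_s \prec \pi_t$, again a contradiction.
\item $\A\cap\C_1$. These require $\pi_j\prec\pi_t$ and $\pi_t\precsim\pi_j$, which is incompatible by property (3).
\item $\A\cap\C_2$. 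Here $j=t$, so $\A$ would require $\pi_t\prec\pi_t$.
\item $\B\cap\C_1$. Here $i=s$, so $\B$ would require $\pi_s\prec\pi_s$.
\item $\B\cap\C_2$. These require $\pi_s \prec \pi_i$ and $\pi_i\precsim\pi_s$, which contradicts property (3).
\item $\C_1\cap\C_2$. This would force both $i=s$ and $j=t$, contradicting \eqref{eq:ijst3}.
\end{itemize}

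There is no deep obstacle here; the main care point is bookkeeping. In particular, $\prec$ is only a strict partial order with $\sim$-ties. One must therefore use property (3) and the mixed transitivity rules (1)--(2), rather than assuming a total order on paths entering $\ctr k$. One must also remember that the conditions $\pi_t\neq\pi_j$ and $\pi_i\neq\pi_s$ in $\C_1$, $\C_2$ are exactly what \eqref{eq:ijst3} supplies. Note that the extremal choices of $r,s,t$ in Algorithm~\ref{a:remove_early_defect_random_k} are not needed for this lemma. They will matter only later, when the blocks are matched bijectively.
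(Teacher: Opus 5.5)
Your proof is correct and is essentially the paper's argument. Both are a direct case analysis at $\ctr k$ using only \eqref{eq:st}, \eqref{eq:ijst3}, Lemma~\ref{l: alt defect defn} and the trichotomy of $\prec$/$\precsim$, with the primed case obtained by the symmetry $s \leftrightarrow t$. The difference is organizational: the paper splits first on whether $\pi_j \prec \pi_t$ or $\pi_s \prec \pi_i$ and invokes \eqref{eq:ijst3} only in the residual case $\pi_t \precsim \pi_j \prec \pi_i \precsim \pi_s$, whereas you split first on which index lies in $\{s,t\}$ and then check all six pairwise intersections explicitly, which the paper leaves implicit except for $\A \cap \B$.
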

\begin{proof}
    % ($\star$ Briefly explain this, probably using some of the following and a few more words.  In particular, explain where the two subcases of $c$ come from.)
    %\end{proof}
%By \eqref{eq:st}, 

To see the first equality of \eqref{eq:blocks}, observe that by definition of $\D(\pi)$, we have 
\begin{equation}\label{eq:ij}
    i<j, \qquad \pi_j\prec \pi_i. 
\end{equation}
% We claim that we cannot have $\pi_j\prec \pi_t$ and $\pi_s\prec \pi_i$ simultaneously: by \eqref{eq:st} and \eqref{eq:ij}, this implies $\pi_j\prec \pi_t\prec\pi_s\prec \pi_i$, contradicting \eqref{eq:ijst3}. Thus we have $\A\cap \B=\emptyset$.
% Combining \eqref{eq:st}, \eqref{eq:ijst3} and \eqref{eq:ij}, we observe we cannot have $\pi_j\prec \pi_t$ and $\pi_s\prec \pi_i$ simultaneously. This would imply that we have $\pi_j\neq \pi_t$ and $\pi_s\neq \pi_i$ simultaneously. 
Consider a defect $(\pi_i, \pi_j, k) \in \D(\pi)$. If $\pi_j\prec \pi_t$, the defect belongs to $\A$. If $\pi_s\prec\pi_i$, it belongs to $\B$. On the other hand, we cannot have $\pi_j\prec \pi_t$ and $\pi_s\prec \pi_i$ simultaneously: by \eqref{eq:st} and \eqref{eq:ij}, this implies $\pi_j\prec \pi_t\prec\pi_s\prec \pi_i$, contradicting \eqref{eq:ijst3}. Thus we have $\A\cap \B=\emptyset$. Now suppose we have $\pi_j\not\prec\pi_t$ and $\pi_s\not\prec\pi_i$, i.e., $\pi_t\precsim \pi_j\prec\pi_i\precsim \pi_s$. By \eqref{eq:ijst3}, this implies that either $\pi_t\precsim\pi_j\prec \pi_i=\pi_s \text{ and } \pi_t\neq \pi_j$, or $\pi_t=\pi_j\prec \pi_i\precsim \pi_s \text{ and } \pi_i\neq \pi_s$, and the defect belongs to $\C_1$ or $\C_2$, respectively.

The second equality of \eqref{eq:blocks} is similar.
\end{proof}

To demonstrate the claimed bijection between $\D(\pi)$ and $\D(\hat\pi)$, we define the map
\begin{equation}\label{eq:psidef}
\begin{aligned}
\psi: \D(\pi)&\to \D(\hat\pi),\\
(\pi_i,\pi_j,k)&\mapsto 
\begin{cases} 
(\hat\pi_i, \hat\pi_j,k),\text{ if } (\pi_i,\pi_j,k) \in \A \cup \B,\\
%Case (1) or (2), } \text{ or if } \pi_i, \pi_j, \pi_s, \pi_t \text{ satisfy } (a) \text{ or } (b),\\ 
(\hat\pi_t, \hat\pi_j,k), \text{ if } (\pi_i, \pi_j, k) = (\pi_s, \pi_j, k) \in \C_1,\\
%\pi_s, \pi_t \text{ satisfy } (c$\,-\,$i) ,\\ 
(\hat\pi_i,\hat\pi_s,k), \text{ if } (\pi_i, \pi_j, k) = (\pi_i, \pi_t, k) \in \C_2,
%\pi_s, \pi_t \text{ satisfy } (c$\,-\,$ii)
\end{cases}
\end{aligned}
\end{equation}
and prove that it is bijective.
%is a bijection.

\begin{lem}\label{l: well defined}
The map $\psi$ is well defined: for $(\pi_i, \pi_j, k)$ defective, we have that
$\psi(\pi_i, \pi_j, k)$ is also defective.  Furthermore, we have
%\begin{equation*}
    $\psi(\A) \subseteq \A'$,
%    \qquad
    $\psi(\B) \subseteq \B'$,
%    \qquad
    $\psi(\C_1) \subseteq \C'_1$,
%    \qquad
    $\psi(\C_2) \subseteq \C'_2$.
%\end{equation*}
\end{lem}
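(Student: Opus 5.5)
The plan is to reduce everything to the alternative description of defects in Lemma~\ref{l: alt defect defn}: a triple is defective exactly when the smaller-indexed path enters $\ctr k$ strictly above the larger-indexed one. So I first record how $\hat\pi$ enters $\ctr k$ compared with $\pi$. By Algorithm~\ref{a:remove_early_defect_random_k}, $\hat\pi_i=\pi_i$ for $i\notin\{s,t\}$. The paths $\hat\pi_s$ and $\hat\pi_t$ use the $\ctr\ell$-to-$\ctr k$ subpaths of $\pi_t$ and $\pi_s$, respectively. Hence $\hat\pi_s$ enters $\ctr k$ on the same edge as $\pi_t$, and $\hat\pi_t$ on the same edge as $\pi_s$. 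Writing $\approx$ for "enters $\ctr k$ on the same edge", every comparison $\hat\pi_a\prec\hat\pi_b$ can be translated into a comparison among the $\pi$'s. I will also use \eqref{eq:st}, namely $s<t$ and $\pi_t\prec\pi_s$, together with the facts that $r\le s$ and $\pi_r\sim\pi_s$. Finally, I will use the two extremal choices in the algorithm: $(r,t)$ is lexicographically least, and $s$ is maximal. In each case below the set of indices $\{i,j\}$ is unchanged or swaps $s\leftrightarrow t$, so condition \eqref{eq:ijst3} is preserved automatically.

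Case $\A$. Here $\pi_j\prec\pi_t\prec\pi_s$, so $j\notin\{s,t\}$ and therefore $i\in\{s,t\}$. Then $\hat\pi_j=\pi_j$. The path $\hat\pi_i$ enters on the edge of $\pi_t$ or of $\pi_s$, both of which lie strictly above $\pi_j$. So $\hat\pi_j\prec\hat\pi_i$ with $i<j$, and $(\hat\pi_i,\hat\pi_j,k)$ is a defect; moreover $\hat\pi_j=\pi_j\prec\pi_t\approx\hat\pi_s$, so the image lies in $\A'$.

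Case $\B$. This is symmetric: $\pi_t\prec\pi_s\prec\pi_i$ forces $i\notin\{s,t\}$ and $j\in\{s,t\}$. Then $\hat\pi_j$ enters at or below $\pi_s\prec\pi_i=\hat\pi_i$, and $\hat\pi_t\approx\pi_s\prec\hat\pi_i$, so the image lies in $\B'$.

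For the $\C$ blocks the images swap indices, so the main work is checking the index inequality; this is where the extremal choices in the algorithm enter.

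Case $\C_1$. We have $i=s<j$ and $\pi_t\precsim\pi_j\prec\pi_s\sim\pi_r$, with $j\ne t$. Since $r\le s<j$ and $\pi_j\prec\pi_r$, the triple $(\pi_r,\pi_j,k)$ is a defect. Lexicographic minimality of $(r,t)$ then gives $t\le j$, hence $t<j$. Now $\hat\pi_j=\pi_j\prec\pi_s\approx\hat\pi_t$, so $(\hat\pi_t,\hat\pi_j,k)$ is a defect. Also $\hat\pi_s\approx\pi_t\precsim\hat\pi_j$, so the image lies in $\C_1'$.

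Case $\C_2$. We have $j=t$, $i<t$ and $\pi_t\prec\pi_i\precsim\pi_s$, with $i\ne s$. I must show $i<s$. Suppose instead $i>s$. If $\pi_i\sim\pi_s$, then $(\pi_i,\pi_t,k)$ is a defect whose path enters on the edge of $\pi_r$, and $i>s$ contradicts the maximality of $s$. If $\pi_i\prec\pi_s\sim\pi_r$, then $r\le s<i$ makes $(\pi_r,\pi_i,k)$ a defect. Since $i<t$, this pair is lexicographically smaller than $(r,t)$, a contradiction. So $i<s$. Then $\hat\pi_s\approx\pi_t\prec\pi_i=\hat\pi_i$ makes $(\hat\pi_i,\hat\pi_s,k)$ a defect, and $\hat\pi_i\precsim\pi_s\approx\hat\pi_t$ places the image in $\C_2'$.

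I expect the $\C_2$ index inequality to be the main obstacle, since it is the only point needing both extremal choices of the algorithm. Everything else is transcription through $\approx$ together with Lemma~\ref{l: alt defect defn} and Lemma~\ref{l:ABCD}.
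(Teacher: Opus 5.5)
Your proposal is correct and follows the paper's proof essentially step for step. It uses the same transfer of entry edges at $\ctr k$ for $\A$ and $\B$, and the same lexicographic minimality of $(r,t)$ to get $t<j$ in $\C_1$. For $\C_2$ it uses the same two subcases $\pi_i\prec\pi_s\sim\pi_r$ and $\pi_i\sim\pi_s\sim\pi_r$, invoking lex-minimality and the maximality of $s$ to obtain $i<s$; your argument by contradiction there is only a cosmetic variant of the paper's direct one.
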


\begin{proof}
%To see that $\phi(\pi_i,\pi_j,k)$ is defective 
%%belongs to $\D(\hat\pi)$ 
%when $(\pi_i,\pi_j,k)$ is,
%Consider the 
%four 
%blocks 
%$\A$ \eqref{eq:ABCC} -- \eqref{eq:blocks} 
%of $\D(\pi)$.
For $(\pi_i,\pi_j,k) \in \A$, the conditions \eqref{eq:st} imply that we have
% \begin{enumerate}[(a)]
    % \item 
%    $(a)$ The conditions $(\pi_i,\pi_j,k)\in\D(\pi)$ and \eqref{eq:st} imply that 
$\pi_j\prec \pi_t\prec \pi_s$ and $i\in\{s,t\}$. 
By Algorithm~\ref{a:remove_early_defect_random_k}, the edges on which $\hat\pi_s$, $\hat\pi_t$ enter $x_k$ are those on which
$\pi_t$, $\pi_s$ enter $x_k$, respectively. 
Thus we have $\hat\pi_j\prec \hat\pi_s\prec \hat\pi_t$, implying that $\hat\pi_j\prec \hat\pi_i$, and
%It follows that
$\psi(\pi_i,\pi_j,k) = (\hat\pi_i,\hat\pi_j,k)$ is defective.
%and
%It follows that 
%$(\hat\pi_i, \hat\pi_j,k) \in 
%\A' \subseteq 
%\D(\hat\pi)$. 
In particular, we have
$(\hat\pi_i, \hat\pi_j,k) \in 
\A'$.
%    $(b)$ Similar. 
Similarly, for $(\pi_i,\pi_j,k) \in \B$ we have that $\psi(\pi_i,\pi_j,k) = (\hat\pi_i,\hat\pi_j,k)$ 
is defective and 
belongs to $\B'$.

Now let $r$ be the number computed in step $1$ of Algorithm~\ref{a:remove_early_defect_random_k}, satisfying
\begin{equation}\label{eq:rst}
    r \leq s < t, \qquad \pi_t \prec \pi_r, \pi_s, \qquad \pi_r \sim \pi_s.
\end{equation}
%belongs to $\D(\pi)$,
%($\star$ Now 
%put $r$ into the $s$-$t$ equation: 
%$r \leq s < t$ and 
%\qquad
%$\pi_t \prec \pi_r, \pi_s$, and 
%clean up the other two cases.)

For $(\pi_i, \pi_j, k) \in \C_1$, 
%$(c\,$-$\,i)$ We 
the conditions 
%we have 
$\pi_t\precsim \pi_j\prec \pi_i=\pi_s$ and $\pi_t\neq \pi_j$ 
%guarantee that 
and Algorithm~\ref{a:remove_early_defect_random_k} 
%produces $\hat\pi$ with 
%we see that 
guarantee that
we have
$\hat\pi_s \precsim \hat\pi_j\prec \hat\pi_t$. 
%We claim that $t < j$ and
To see that $\psi(\pi_i,\pi_j,k) = (\hat\pi_t, \hat\pi_j,k)$ is defective and belongs to $\C'_1$, it remains to show that $t<j$ and 
%that 
$\hat \pi_s \neq \hat \pi_j$. 
Since we have $\pi_j \prec \pi_i=\pi_s\sim\pi_r$ with $r\leq s = i < j$, we know that $(\pi_r,\pi_j,k)$ is defective and that $\hat \pi_s \neq \hat \pi_j$. 
%($\star$ Is another fact missing here?  Are hats missing on the paths in some triples?) 
By our choice of $r$ in step $1$ of Algorithm~\ref{a:remove_early_defect_random_k}, we must have $(r, t)\leq_{\mathrm{lex}}(r,j)$ and therefore $t\leq j$. Since $\pi_t \neq \pi_j$, we 
%do indeed 
have $t<j$, as desired. 
%($\star$ what reason?) we conclude that $(\hat\pi_t, \hat\pi_j,k)$ belongs to $\C'_1$. ($\star$ by definition of $\C'_1$ it contains defects where $i=t$)

%    $(c\,$-$\,ii)$ 
    
 For $(\pi_i, \pi_j, k) \in \C_2$, the conditions
 %We have that 
 $\pi_t=\pi_j\prec \pi_i\precsim \pi_s$ and $\pi_i\neq \pi_s$ 
and 
%guarantee that 
Algorithm~\ref{a:remove_early_defect_random_k} 
%produces $\hat\pi$ with 
guarantee that
 we have $\hat\pi_s\prec \hat\pi_i \precsim \hat\pi_t$ and $\hat\pi_i \neq \hat\pi_t$. 
 To see that $\psi(\pi_i, \pi_j, k) = (\hat\pi_i, \hat\pi_s,k)$ is defective and belongs to $\C'_2$,
 %$\in\D(\hat\pi)$, 
it remains to show that $i<s$.
%and $\hat \pi_i \neq \hat\pi_t$. 
Since $\pi_i \precsim \pi_s$ and by \eqref{eq:rst}, $\pi_i, \pi_s, \pi_r$ are related by $\pi_i \prec \pi_s \sim \pi_r$ or $\pi_i \sim \pi_s \sim \pi_r$.
%We consider the two cases of $\pi_i \precsim \pi_s$.
%($\star$ Polish the rest.) 
%First consider the case that $\pi_i\prec \pi_s\sim \pi_r$.
Consider the first case. We have $i < j = t$ so $(r, i) <_{\mathrm{lex}} (r, t)$, and by step 1 of Algorithm~\ref{a:remove_early_defect_random_k} $(\pi_r, \pi_i, k)$ is not defective. We know that $\pi_i \prec \pi_r$ so $i < r$. By \eqref{eq:rst} we also have $r \leq s$, and thus $i < s$.
% Suppose that $s < i$ in the first case. By \eqref{eq:rst} we have $r < i$, so $(\pi_r, \pi_i, k)$ is defective. But $i < j = t$, so $(r, i) <_{\mathrm{lex}} (r, t)$, and by step 1 of Algorithm~\ref{a:remove_early_defect_random_k} $(\pi_r, \pi_i, k)$ cannot be defective. Thus $i < s$.
%Suppose $r\leq s<i$, we have that $(\pi_r, \pi_i,k)$ is defective. But $i<j$, then $r\leq s<i<j=t$, implying $(\pi_r, \pi_i,k)$ to be defective. But $(r,i)<(r,t)$, contradicting to the fact that $(r,t)$ is the lex-least pair. Therefore, we cannot have $i>s$ with $\pi_i\prec \pi_s$. 
Now consider the second case. Since $\pi_i \sim \pi_r$ and $(\pi_i, \pi_t,k)$ is defective, step 2 of Algorithm~\ref{a:remove_early_defect_random_k} gives $i < s$.
\end{proof}
% \end{enumerate}
%Since $(\pi_i,\pi_j,k)=(\pi_i, \pi_t,k)$ is defective. By Algorithm~\ref{a:remove_early_defect_random_k}, we have $i<s$.

\begin{lem}\label{lem: injective}
The map $\psi$ is injective. 
\end{lem}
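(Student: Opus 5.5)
By Lemma~\ref{l: well defined}, $\psi$ maps each block of the partition in Lemma~\ref{l:ABCD} into the corresponding block of $\D(\hat\pi)$:
\begin{equation*}
\psi(\A)\subseteq\A',\quad \psi(\B)\subseteq\B',\quad \psi(\C_1)\subseteq\C'_1,\quad \psi(\C_2)\subseteq\C'_2 .
\end{equation*}
By Lemma~\ref{l:ABCD}, the primed blocks are pairwise disjoint. So two defects with equal images must lie in the same block of $\D(\pi)$, and it suffices to show that the restriction of $\psi$ to each of $\A$, $\B$, $\C_1$, $\C_2$ is injective.

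On $\A\cup\B$, the map is $(\pi_i,\pi_j,k)\mapsto(\hat\pi_i,\hat\pi_j,k)$. The image determines the index pair $(i,j)$, and a defect at $x_k$ is determined by its index pair. Hence $\psi$ is injective there. Here I will use that a triple is identified with its index pair and vertex, and not with the path objects themselves. Otherwise coincidences such as $\hat\pi_s=\pi_t$ as paths through $x_k$ could cause confusion, so I will state this convention explicitly.

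On $\C_1$, every element has the form $(\pi_s,\pi_j,k)$ with $j\notin\{s,t\}$ (by \eqref{eq:ijst3}, since $\pi_t\neq\pi_j$). Its image is $(\hat\pi_t,\hat\pi_j,k)$, whose second index recovers $j$, so $\psi$ is injective on $\C_1$. Similarly, every element of $\C_2$ has the form $(\pi_i,\pi_t,k)$ with $i\notin\{s,t\}$. Its image $(\hat\pi_i,\hat\pi_s,k)$ recovers $i$ from the first coordinate, so $\psi$ is injective on $\C_2$.

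The only subtle point is the first step, namely that images of different blocks cannot coincide. This rests entirely on the disjointness of $\A',\B',\C'_1,\C'_2$ from Lemma~\ref{l:ABCD}, together with the block-preservation statement of Lemma~\ref{l: well defined}. So I expect no real obstacle beyond checking carefully that those two lemmas are applied with consistent index conventions, that is, that membership in $\C'_1$ and $\C'_2$ is decided by index equalities such as $i=t$ or $j=s$. That convention is what makes the blocks genuinely disjoint.
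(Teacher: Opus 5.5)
Your proposal is correct and follows essentially the same argument as the paper's proof. You use the block preservation of Lemma~\ref{l: well defined} and the disjointness in Lemma~\ref{l:ABCD} to reduce to pairs of defects in the same block. You then observe that $\psi$ keeps the index pair on $\A\cup\B$, keeps $j$ on $\C_1$ (where $i=s$ is fixed), and keeps $i$ on $\C_2$ (where $j=t$ is fixed).
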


\begin{proof}
%To see that $\psi$ is injective, let $(\pi_{i_1}, \pi_{j_1}, k), (\pi_{i_2}, \pi_{j_2}, k)$ be distinct defects in $\D(\pi)$, and consider all combinations of the two defects falling into cases $(a)$ -- $(c\,$-$\,ii)$.
% ($a$) 
By Lemmas~\ref{l:ABCD} -- \ref{l: well defined}, we consider pairs $(\pi_{i_1}, \pi_{j_1}, k) , (\pi_{i_2}, \pi_{j_2}, k)$ of distinct defects in $\A$, $\B$, $\C_1$, $\C_2$ separately.

%Consider distinct 
%For \in \A$, the map
%fall into case $(a)$. For $(\pi_{i_2}, \pi_{j_2}, k)$ in case $(a)$ or $(b)$, 
If the two defects belong to $\A \cup \B$, then $\psi$ clearly maps them 
%two defects 
to $(\hat\pi_{i_1}, \hat\pi_{j_1}, k) \neq (\hat\pi_{i_2}, \hat\pi_{j_2}, k)$. 
%If the second triple in $\pi$ falls in case $(c$\,-\,$i)$ instead, $\psi$ maps it to $(\hat\pi_t, \hat\pi_{j_2}, k)$, which differs from $(\hat\pi_{i_1}, \hat\pi_{j_1}, k)$ since $\pi_{j_1} \prec \pi_t \precsim \pi_{j_2}$ so $j_1 \neq j_2$. If the second triple in $\pi$ falls in case $(c$\,-\,$ii)$ instead, $\psi$ maps it to $(\hat\pi_{i_2}, \hat\pi_s, k)$, which differs from $(\hat\pi_{i_1}, \hat\pi_{j_1}, k)$ since $\pi_{j_1} \prec \pi_t \prec \pi_s$, so $j_1 \neq s$.
%Let $(\pi_{i_1}, \pi_{j_1}, k)$ fall into case $(b)$. Again, $\psi$ maps $(\pi_{i_1}, \pi_{j_1}, k)$ to $(\hat\pi_{i_1}, \hat\pi_{j_1}, k)$. Now consider the second triple $(\pi_{i_2}, \pi_{j_2}, k)$ falls in case $(b)$. Then $\psi$ maps it to $(\hat\pi_{i_2}, \hat\pi_{j_2}, k)$, which differs from $(\hat\pi_{i_1}, \hat\pi_{j_1}, k)$. Next, consider the second triple $(\pi_{i_2}, \pi_{j_2}, k)$ falls in case $(c$\,-\,$i)$. It gets mapped to $(\hat\pi_t, \hat\pi_{j_2}, k)$, different from $(\hat\pi_{i_1}, \hat\pi_{j_1}, k)$ since $\pi_t \prec \pi_s \prec \pi_{i_1}$ so $i_1 \neq t$. Finally, consider that the second triple falls in $(c$\,-\,$ii)$. It gets mapped to $(\hat\pi_{i_2}, \hat\pi_s, k)$, which is different from $(\hat\pi_{i_1}, \hat\pi_{j_1}, k)$ since $\pi_{i_2} \precsim \pi_s \prec \pi_{i_1}$ so $i_1 \neq i_2$.
If the two defects belong to $\C_1$, then we have
%
 % Let $(\pi_{i_1}, \pi_{j_1}, k)$ fall into case $(c\,$-$\,i)$. If $(\pi_{i_2}, \pi_{j_2}, k)$ also falls in $(c$\,-\,$i)$, we see that
 $i_1 = i_2 = s$, so $j_1 \neq j_2$. Thus $\psi$ maps the two defects to %and 
 $(\hat\pi_t, \hat\pi_{j_1}, k) \neq (\hat\pi_t, \hat\pi_{j_2}, k)$. 
% If instead the second triple falls in $(c$\,-\,$ii)$, we see that $(\hat\pi_t, \hat\pi_{j_1}, k) \neq (\hat\pi_{i_2}, \hat\pi_s, k)$ since $\pi_t \prec \pi_{i_2}$ so $t \neq i_2$.
 Finally, if the two defects belong to $\C_2$, 
 then we have
% consider $(\pi_{i_1}, \pi_{j_1}, k)$, $(\pi_{i_2}, \pi_{j_2}, k)$ both in case $(c$\,-\,$ii)$. Since 
$j_1 = j_2 = t$, so $i_1 \neq i_2$.  Thus $\psi$ maps the two defects to $(\hat\pi_{i_1}, \hat\pi_s, k) \neq (\hat\pi_{i_2}, \hat\pi_s, k)$. 
%Then $\psi$ is injective. 
\end{proof}

\begin{lem}\label{l: surjective}
    The map $\psi$ is surjective.  
    %In particular, we have
    %$\psi(\A) = \A'$, $\psi(\B) = \B'$, $\psi(\C_1) = \C'_1$,
    %$\psi(\C_2) = \C'_2$.
\end{lem}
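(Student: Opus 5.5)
The plan is to prove surjectivity directly, block by block, using the partition $\D(\hat\pi) = \A' \uplus \B' \uplus \C'_1 \uplus \C'_2$ of Lemma~\ref{l:ABCD}. For each defect in one of these four blocks I will write down an explicit triple in $\D(\pi)$ and check that $\psi$ sends it there. The only facts needed are these:
\begin{itemize}
\item $\hat\pi_i = \pi_i$ for $i \notin \{s,t\}$;
\item by Algorithm~\ref{a:remove_early_defect_random_k}, $\hat\pi_s$ enters $\ctr k$ on the edge of $\pi_t$, and $\hat\pi_t$ enters on the edge of $\pi_s$;
\item the relations \eqref{eq:st};
\item the characterization of defects in Lemma~\ref{l: alt defect defn}.
\end{itemize}
Together these say that swapping the roles of $s$ and $t$ translates every $\prec$, $\sim$, $\precsim$ relation at $\ctr k$ between $\hat\pi$ and $\pi$.

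For $(\hat\pi_i,\hat\pi_j,k) \in \A'$ we have $\hat\pi_j \prec \hat\pi_s \prec \hat\pi_t$, so $j \notin \{s,t\}$. By \eqref{eq:ijst3} this forces $i \in \{s,t\}$. Translating back gives $\pi_j \prec \pi_t \prec \pi_s$, hence $\pi_j \prec \pi_i$. Since $i<j$, the triple $(\pi_i,\pi_j,k)$ is a defect in $\A$, and $\psi$ maps it to $(\hat\pi_i,\hat\pi_j,k)$. The block $\B'$ is symmetric: $\hat\pi_t \prec \hat\pi_i$ forces $i \notin\{s,t\}$ and $j \in \{s,t\}$, which gives $\pi_j \precsim \pi_s \prec \pi_i$, so $(\pi_i,\pi_j,k) \in \B$.

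For $(\hat\pi_t,\hat\pi_j,k) \in \C'_1$ we have $t<j$ and $\hat\pi_s \precsim \hat\pi_j \prec \hat\pi_t$ with $\hat\pi_s \neq \hat\pi_j$, so $j \notin \{s,t\}$. I take the preimage to be $(\pi_s,\pi_j,k)$. Indeed $s<t<j$, and translating gives $\pi_t \precsim \pi_j \prec \pi_s$ with $\pi_t \neq \pi_j$. So this triple is defective, satisfies \eqref{eq:ijst3}, lies in $\C_1$, and $\psi$ maps it to $(\hat\pi_t,\hat\pi_j,k)$.

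For $(\hat\pi_i,\hat\pi_s,k) \in \C'_2$ we have $\hat\pi_s \prec \hat\pi_i \precsim \hat\pi_t$, $\hat\pi_i\neq\hat\pi_t$ and $i<s$, so $i \notin\{s,t\}$. I take the preimage to be $(\pi_i,\pi_t,k)$. Here $i<s<t$ and $\pi_t \prec \pi_i \precsim \pi_s$ with $\pi_i \neq \pi_s$, so the triple lies in $\C_2$ and maps to $(\hat\pi_i,\hat\pi_s,k)$.

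I expect no single step to be hard. The main point of care is the bookkeeping in the $\C'$ cases: one must confirm that the index inequalities built into the definitions of $\C'_1$ and $\C'_2$ ($t<j$ and $i<s$, respectively) are exactly what is needed for the proposed preimages to satisfy $i<j$. One must also check that the "$\neq$" clauses exclude the index coincidences that would violate \eqref{eq:ijst3}.
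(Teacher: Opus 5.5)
Your proposal is correct and follows the paper's proof. Both go block by block through $\A',\B',\C'_1,\C'_2$: the first two blocks are handled via $(\pi_i,\pi_j,k)$ itself, and the last two via the preimages $(\pi_s,\pi_j,k)$ and $(\pi_i,\pi_t,k)$, with relations at $\ctr k$ translated by exchanging the entry edges of $s$ and $t$. Your explicit deduction that $j\notin\{s,t\}$ (respectively $i\notin\{s,t\}$) before invoking \eqref{eq:ijst3} is slightly more careful than the paper, which simply asserts, for example, $i\in\{s,t\}$ in the $\A'$ case.
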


\begin{proof}
%Consider a defect $(\hat\pi_i, \hat\pi_j, k) \in \D(\hat\pi)$.
For $(\hat\pi_i, \hat\pi_j, k) \in \A'$
%, then 
%($a'$) We 
we have $\hat \pi_j \prec \hat \pi_s \prec \hat \pi_t$, $i \in \{s, t\}$, so $\pi_j \prec \pi_t \prec \pi_s$. Since $i<j$, it follows that $(\pi_i, \pi_j, k)$ is defective, belongs to $\A$, and satisfies
%$, which is in case $(a)$, so 
$\psi((\pi_i, \pi_j, k)) = (\hat\pi_i, \hat\pi_j, k)$. 
Similarly, for 
$(\hat\pi_i, \hat\pi_j, k) \in \B'$
we have that
$(\pi_i, \pi_j, k)$ is defective, belongs to $\B$, and  satisfies
%$, which is in case $(a)$, so 
$\psi((\pi_i, \pi_j, k)) = (\hat\pi_i, \hat\pi_j, k)$.
%($b'$) Similar.

% ($\star$ Finish polishing the following.)

For $(\hat\pi_i, \hat\pi_j, k) \in \C'_1$ we have
%($c'\,$-$\,i$) We have 
$\hat \pi_s \precsim \hat \pi_j \prec \hat \pi_i = \hat \pi_t$ with $\hat \pi_s \neq \hat \pi_j$.
%$ and $s < t = i < j$.
Consider the triple $(\pi_s, \pi_j, k)$. 
We have that $s < t = i < j$. 
This means that $\hat \pi_j = \pi_j$ so $\pi_t \precsim \pi_j \prec \pi_s$. Since $s < j$ and $\pi_j \prec \pi_s$, we have that $(\pi_s, \pi_j, k)$ is a defect. Furthermore, these conditions imply that we have $(\pi_s, \pi_j, k) \in \C_1$.
% $j \notin \{s, t\}$ so $(\pi_s, \pi_j, k) \in \D(\pi)$ and in case $(c$\,-\,$i)$. 
By \eqref{eq:psidef}, $\psi((\pi_s, \pi_j, k)) = (\hat\pi_t, \hat\pi_j, k) = (\hat\pi_i, \hat\pi_j, k)$. 

For $(\hat\pi_i, \hat\pi_j, k) \in \C'_2$ we have
$\hat \pi_s = \hat \pi_j \prec \hat \pi_i \precsim \hat \pi_t$ with $\hat \pi_t \neq \hat \pi_i$. Consider the triple $(\pi_i, \pi_t, k)$. We have that $i < j = s < t$. This means that $\hat \pi_i = \pi_i$ so $\pi_t \prec \pi_i \precsim \pi_s$. Since $i<t$ and $\pi_t\prec \pi_i$, we have that $(\pi_i, \pi_t, k)$ is a defect. Furthermore, these conditions imply that we have $(\pi_i, \pi_t, k) \in \C_2$. By \eqref{eq:psidef}, $\psi((\pi_i, \pi_t, k)) = (\hat\pi_i, \hat\pi_s, k) = (\hat\pi_i, \hat\pi_j, k)$. 
% Since $i \notin \{s, t\}$, $(\pi_i, \pi_t, k) \in \D(\pi)$ in case $(c$\,-\,$ii)$, so $\psi((\pi_i, \pi_t, k)) = (\hat\pi_i, \hat\pi_s, k) = (\hat\pi_i, \hat\pi_j, k)$. Then $\psi$ is surjective.
\end{proof}

Given the bijectivity of $\psi$, we can now show that the map $\phi_k$
\eqref{eq:phidef}, designed to modify a path family by removing a defect at
a specified vertex $x_k$, indeed does so.
    \begin{prop}\label{p:remove one defect}
        Given star network 
    $F$ as in \eqref{eq:starnetwork}, 
    %$w \in \sn$, $d \geq 1$ 
    path family $\pi \in \Pi(F)$,
    and index $k$ such that $\pi$ has a defect at $\ctr k$,  we have that $\phi_k(\pi)=\hat\pi$ satisfies
         %the number of defects 
        \begin{equation*}\#\{(i,j) \,|\, (\hat\pi_i, \hat\pi_j, k) \text{ defective}\} = \#\{(i,j) \,|\, (\pi_i, \pi_j, k) \text{ defective}\} - 1.\end{equation*} 
        %is exactly one less than the number of defects $(\pi_i, \pi_j, k)$.
%        the number of defects $(\hat\pi_i, \hat\pi_j,p)$ with $p > k$ 
%        is exactly the number of defects $(\pi_i, \pi_j, p)$ with $p > k$.
\end{prop}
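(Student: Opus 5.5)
We will partition the defects of $\pi$ and of $\hat\pi$ at $\ctr k$ according to how their two path indices meet the set $\{s,t\}$, where $s$, $t$ are the indices computed in steps 1 -- 2 of Algorithm~\ref{a:remove_early_defect_random_k}. Each triple $(\pi_i,\pi_j,k)$ with $i<j$ falls into exactly one of three classes: $\{i,j\}=\{s,t\}$, or $\{i,j\}\cap\{s,t\}=\emptyset$, or $|\{i,j\}\cap\{s,t\}|=1$. The same holds for triples of $\hat\pi$. We will count the defects in each class separately and compare.

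\emph{First class.} By \eqref{eq:st} we have $s<t$ and $\pi_t\prec\pi_s$. By Lemma~\ref{l: alt defect defn}, the triple $(\pi_s,\pi_t,k)$ is therefore a defect. The swap in step 4 exchanges the edges on which $\pi_s$ and $\pi_t$ enter $\ctr k$, so $\hat\pi_s\prec\hat\pi_t$. Again by Lemma~\ref{l: alt defect defn}, the triple $(\hat\pi_s,\hat\pi_t,k)$ is not a defect. This class thus contributes exactly one more defect to $\pi$ than to $\hat\pi$.

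\emph{Second class.} For $i,j\notin\{s,t\}$ we have $\hat\pi_i=\pi_i$ and $\hat\pi_j=\pi_j$, and both paths enter $\ctr k$ on the same edges as before. Hence $\pi_j\prec_k\pi_i$ if and only if $\hat\pi_j\prec_k\hat\pi_i$, and these defects correspond identically.

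\emph{Third class.} These are exactly the sets $\D(\pi)$ and $\D(\hat\pi)$ defined before Lemma~\ref{l:ABCD}. Lemmas~\ref{l:ABCD}, \ref{l: well defined}, \ref{lem: injective} and \ref{l: surjective} together show that $\psi$ is a well-defined bijection $\D(\pi)\to\D(\hat\pi)$. Hence $|\D(\pi)|=|\D(\hat\pi)|$.

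Summing over the three classes gives the claimed difference of exactly one. The only point needing care is that the three classes really do exhaust the defective pairs at $\ctr k$ for both families, and that $\D$ is defined by the same condition \eqref{eq:ijst3} for $\pi$ and $\hat\pi$. This is immediate because the labels $s,t$ are fixed indices and are not recomputed for $\hat\pi$. The substantive work was already done in establishing bijectivity of $\psi$, so no step should present a serious obstacle.
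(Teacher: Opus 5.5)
Your proposal is correct and is essentially the paper's own proof. The paper likewise splits pairs $(i,j)$ with $i<j$ by $|\{i,j\}\cap\{s,t\}|\in\{0,1,2\}$. In the disjoint case it uses $\hat\pi_i=\pi_i$ and $\hat\pi_j=\pi_j$; in the middle case it uses the bijection $\psi$ of Lemmas~\ref{l: well defined}--\ref{l: surjective}; and it applies Lemma~\ref{l: alt defect defn} to see that $(\pi_s,\pi_t,k)$ is defective while $(\hat\pi_s,\hat\pi_t,k)$ is not.
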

\begin{proof}
By Definition~\ref{d:defect} we consider only pairs $(i,j)$ with $i<j$.
%$i > j$ then $(\pi_i, \pi_j, i)$ and $(\hat\pi_i, \hat\pi_j, k)$ are not defective.
Let $s$, $t$
be the numbers computed in steps 1 -- 2 of 
Algorithm~$\ref{a:remove_early_defect_random_k}$,
and consider three cases of pairs $(i,j)$.

If $|\{i,j\} \cap \{s,t\}| = 0$, then  
by the discussion between \eqref{eq:st} and \eqref{eq:ijst3},
%by the cardinalities of their intersections
%\begin{equation}\label{eq:ijstcard}
%|\{i,j\} \cap \{s,t\}| \in \{0,1,2\}.
%\end{equation}
%When this cardinality is $0$, we have
%a cardinality 
%(\ref{eq:ijstcard}) 
%of $0$ implies that 
we have that $(\pi_i, \pi_j, k)$ is defective 
if and only if $(\hat\pi_i, \hat\pi_j, k)$ is defective.

If $|\{i,j\} \cap \{s,t\}| = 1$,
then by Lemmas~\ref{l: well defined} -- \ref{l: surjective},
we have a bijection between the corresponding sets $\D(\pi)$, $\D(\hat\pi)$
of defects.
%$(\pi_i, \pi_j, k)$
%correspond bijectively to

Finally, if $|\{i,j\} \cap \{s,t\}| = 2$,
%$i=s$ and $j=t$,
%while a cardinality of $2$ implies
%Similarly when the cardinality (\ref{eq:ijstcard}) is $2$,
%that 
then by Lemma~\ref{l: alt defect defn} we have that
$(\pi_i, \pi_j, k)$ 
is defective while $(\hat \pi_i, \hat \pi_j, k)$ is not.
\end{proof}

% ($\star$ Can we remove this lemma entirely? It's somewhat needed for Lemma $\ref{l: zero defects condensed}$, but seems too trivial.)
For $F$ as in \eqref{eq:starnetwork},
repeated application of the map $\phi_k$ to a path family $\pi \in \Pi_w(F)$
produces a new path family in $\Pi_w(F)$ 
having defects which match those of $\pi$ at vertices $x_{k+1}, \dotsc, x_m$,
having no defects at the vertex $x_k$,
and possibly having more defects than $\pi$ has at vertices $x_1, \dotsc, x_{k-1}$. 
%may create new defects to the left of the vertex $x_k$,
\begin{prop}\label{p: d less defects condensed}
    Fix $F$ as in \eqref{eq:starnetwork}, $v \in \sn$, $\pi \in \Pi_v(F)$, and $k \in [m]$. 
    %Suppose there exist $d$ defects of the form $(\pi_i, \pi_j, k)$.
    %Applying Algorithm~\ref{a:remove_early_defect_random_k} a total of $d$ times gives $\sigma = \phi^d_k(\pi)$ satisfying
    Then there exists $\sigma \in \Pi_v(F)$ satisfying
    \begin{enumerate}
        \item for each $p > k$, $\{(i,j) \,|\, (\sigma_i, \sigma_j, p) \text{ defective}\} = \{(i,j) \,|\, (\pi_i, \pi_j, p) \text{ defective}\}$,      
        \item $\#\{(i,j) \,|\, (\sigma_i, \sigma_j, k) \text{ defective}\} = 0$.
    \end{enumerate}
\end{prop}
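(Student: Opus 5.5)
We will prove the statement by induction on the number $d$ of defects of $\pi$ at $\ctr k$, iterating the map $\phi_k$ of Algorithm~\ref{a:remove_early_defect_random_k}. Let $d = \#\{(i,j) \,|\, (\pi_i,\pi_j,k) \text{ defective}\}$. If $d = 0$, take $\sigma = \pi$; both conditions hold trivially. This covers in particular the case $k = 1$, since by Observation~\ref{o:defect} no path family has a defect at $\ctr 1$.

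Now suppose $d \geq 1$, so that necessarily $k \geq 2$ and $\pi$ lies in the domain \eqref{eq:phidef} of $\phi_k$. We set $\pi^{(0)} = \pi$ and $\pi^{(h)} = \phi_k(\pi^{(h-1)})$ for $h = 1, \dotsc, d$. We check by induction on $h$ that this is legitimate and that:
\begin{enumerate}
\item $\pi^{(h)} \in \Pi_v(F)$;
\item $\pi^{(h)}$ has exactly $d-h$ defects at $\ctr k$;
\item $\pi^{(h)}$ has the same defective pairs $(i,j)$ as $\pi$ at each $\ctr p$ with $p > k$.
\end{enumerate}
For the inductive step, note that $d - (h-1) \geq 1$ for $h \leq d$. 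Hence $\pi^{(h-1)}$ has a defect at $\ctr k$ and $\phi_k$ may be applied to it. Proposition~\ref{p: one less defect condensed-adjusted}(1) gives $\type(\pi^{(h)}) = \type(\pi^{(h-1)}) = v$. The family $\pi^{(h)}$ still covers $F$, since swapping two $\ctr \ell$-to-$\ctr k$ subpaths preserves the multiset of edges used. So $\pi^{(h)} \in \Pi_v(F)$. Proposition~\ref{p:remove one defect} lowers the defect count at $\ctr k$ by exactly one. Proposition~\ref{p: one less defect condensed-adjusted}(2) shows that the sets of defective pairs at each $\ctr p$ with $p > k$ are unchanged. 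Chaining these equalities back to $\pi$ completes the induction.

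Setting $\sigma = \pi^{(d)}$ then gives conditions (1) and (2) of the statement directly. The only point needing care is that $\phi_k$ is applied only while a defect at $\ctr k$ remains, which is guaranteed because the count drops by exactly one at each step. Since all the substantive work is in Propositions~\ref{p: one less defect condensed-adjusted} and \ref{p:remove one defect}, we expect no real obstacle here. The one small check is that $\hat\pi$ is again a covering family, so that $\phi_k$ indeed maps into $\Pi(F)$ as asserted in \eqref{eq:phidef}.
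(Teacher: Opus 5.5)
Your proof is correct and follows the paper's argument: the paper also sets $\sigma = \phi_k^d(\pi)$, with $d$ the number of defects at $\ctr k$, and cites Proposition~\ref{p: one less defect condensed-adjusted} for the type and for the defects at $\ctr p$, $p>k$, and Proposition~\ref{p:remove one defect} for the count at $\ctr k$. Your explicit induction, which checks that a defect at $\ctr k$ remains before each application of $\phi_k$ and treats the case $d=0$ separately, spells out what the paper compresses into ``these $d$ iterations.''
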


\begin{proof}
    Let $d = \#\{(\pi_i, \pi_j, j) \,|\, (\pi_i, \pi_j, j) \text{ defective}\}$. Let $\sigma = \phi_k^d(\pi)$. By Proposition ~\ref{p: one less defect condensed-adjusted}, these $d$ iterations of Algorithm \ref{a:remove_early_defect_random_k} guarantee that we have $\sigma \in \Pi_v(F)$ and $(1)$. Then by Proposition \ref{p:remove one defect} these $d$ iterations give $(2)$.
\end{proof}

% \begin{proof}
%     ($\star$ Maybe mention that we can even apply $\phi_k^d$ to $\pi$, since there are $d$ defects of the form $(\pi_i, \pi_j, k)$)

%     ($\star$ Or maybe redefine $\phi_k$ so that $\phi_k(\pi) = \pi$ if there are no defects of the form $(\pi_i, \pi_j, k)$)

%     We use $\phi_k^d(\pi)$ to denote applying Algorithm~\ref{a:remove_early_defect_random_k} repeatedly $d$ times to $\pi$.
%     \begin{enumerate}
%         \item Since $\pi \in \Pi_w(F)$, then by Proposition $\ref{p: one less defect condensed-adjusted}$, we have $\phi_k(\pi), \phi_k^2(\pi), \dots, \phi_k^d(\pi) \in \Pi_w(F)$.
%         \item It is clear by Proposition $\ref{p: one less defect condensed-adjusted}$, that for each $p > k$, $\{(i,j) \,|\, (\pi_i, \pi_j, p) \text{ defective}\} = \{(i,j) \,|\, (\phi_k(\pi)_i, \phi_k(\pi)_j, p) \text{ defective}\} = \cdots = \{(i,j) \,|\, (\phi_k^d(\pi)_i, \phi_k^d(\pi)_j, p) \text{ defective}\}$.
%         ($\star$ Rewrite this?)
%         \item Since each application of $\phi_k$ decreases the number of defects at $\ctr k$ by exactly one, we clearly have that $d$ applications of $\phi_k$ drops this value to $0$. 
%     \end{enumerate}\end{proof}

We can now show, as claimed before (\ref{eq:2by2ex}), that if a star network can be covered by a path family having
$d$ defects, then it can be covered by another path family of the same
type having $d-1$ defects.
%$F = F_{[a_1,b_1]} \bullet \cdots \bullet F_{[a_m,b_m]}$,
%and path family $\pi \in \Pi_{w,d}(F)$, 
This second path family is obtained from the first by judicious applications of the maps $\phi_1,\dotsc,\phi_m$.

\begin{thm}\label{t:decrease defects by 1}
    Fix star network $F$ as in \eqref{eq:starnetwork}.  If for some $v \in \sn$ and $d \geq 1$ the set $\Pi_{v,d}(F)$ is nonempty, then the sets $\Pi_{v,0}(F),\dotsc, \Pi_{v,d-1}(F)$ are also nonempty.
\end{thm}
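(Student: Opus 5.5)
By induction it suffices to prove the single step: if $\Pi_{v,d}(F)$ is nonempty with $d \geq 1$, then $\Pi_{v,d-1}(F)$ is nonempty. Repeating this step then fills in $d-2, \dotsc, 0$.

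Fix $\pi \in \Pi_{v,d}(F)$. Since $d \geq 1$, the family $\pi$ has at least one defect. Let $k$ be the \emph{largest} index such that $\pi$ has a defect at $\ctr k$. By Observation~\ref{o:defect} we have $k \geq 2$, so $\phi_k$ applies.

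Set $\hat\pi = \phi_k(\pi)$. We compare defects of $\hat\pi$ and $\pi$ vertex by vertex:
\begin{enumerate}
\item By Proposition~\ref{p: one less defect condensed-adjusted}, $\hat\pi \in \Pi_v(F)$, and its defects at each $\ctr p$ with $p>k$ coincide with those of $\pi$. There are none, by maximality of $k$.
\item By Proposition~\ref{p:remove one defect}, $\hat\pi$ has exactly one fewer defect at $\ctr k$ than $\pi$.
\item At vertices $\ctr 1,\dotsc,\ctr{k-1}$ the count may go up or down.
\end{enumerate}
So $\phi_k$ alone need not produce $d-1$ defects; the example $\sigma = \phi_4(\pi)$ in \eqref{eq:pi sigma} shows the total can even increase. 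A different strategy is needed.

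The idea is to locate the $d-1$ family by moving between two extremes one move at a time. For each $j \leq d$, write $\pi^{(j)}$ for the family obtained from $\pi$ by applying $\phi_k$ $j$ times. By step~2 this is possible whenever $j$ is at most the number $c$ of defects of $\pi$ at $\ctr k$. The trouble is that the earlier vertices are uncontrolled. So I would argue by induction on $m$, the number of internal vertices, with the following stronger hypothesis.
\begin{quote}
For every star network with fewer internal vertices, and for each $v$, the set $\{\dfct(\pi) \,|\, \pi \in \Pi_v\}$ is an interval of integers containing $0$.
\end{quote}

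To run the induction, truncate $F$ at $\ctr{k-1}$. Concretely, split a path family into
\begin{enumerate}
\item its restriction to the initial network $F_{[a_1,b_1]}\cdots F_{[a_{k-1},b_{k-1}]}$, and
\item the behavior at $\ctr k,\dotsc,\ctr m$.
\end{enumerate}
Defects at $\ctr p$ depend only on the entry order at $\ctr p$, by Lemma~\ref{l: alt defect defn}, and that order is determined by the crossing pattern up to $\ctr p$. Hence, once the restricted type $u$ on the initial network is fixed, the defects at $\ctr k$ and later are determined by the choices made from $\ctr k$ onward.

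Now fix $\pi$ with initial restricted type $u$. Apply the induction hypothesis to the initial network with type $u$. Changing only the initial piece within $\Pi_u$ leaves the defect count at $\ctr k,\dotsc,\ctr m$ unchanged, call it $c$. By hypothesis, the initial piece realizes every defect count in $[0, d-c]$, where $d-c$ is the count realized by $\pi$ itself. In particular it realizes $d-c-1$ whenever $d-c \geq 1$, which gives a family in $\Pi_{v,d-1}(F)$.

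The remaining case is $d = c$: all defects of $\pi$ lie at $\ctr k$. Apply Proposition~\ref{p: d less defects condensed}-style iteration, but only once. The family $\hat\pi = \phi_k(\pi)$ has
\begin{itemize}
\item no defects after $\ctr k$,
\item exactly $c-1$ defects at $\ctr k$, and
\item some number $e \geq 0$ of defects on its initial piece, whose type $u'$ may differ from $u$.
\end{itemize}
The induction hypothesis applied to the initial network with type $u'$ lets us replace the initial piece by one with $0$ defects and the same $u'$. This leaves $c-1 = d-1$ defects in total.

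The main obstacle is justifying that replacing the initial piece keeps the later defect counts fixed. That is, I must check that entry orders at $\ctr k,\dotsc,\ctr m$, including the $\sim$ relations coming from condensed edges, depend only on which source enters which sink of the truncated network. Once this is established the induction closes. The base case, star networks with at most one internal vertex, is immediate because such networks have no defects.
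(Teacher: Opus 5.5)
Your argument is correct, and it differs from the paper's in how it disposes of the defects that $\phi_k$ may create at $\ctr 2,\dotsc,\ctr{k-1}$.

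The paper takes $k$ to be the \emph{smallest} index carrying a defect and applies $\phi_k$ once. It then sweeps backward, applying Proposition~\ref{p: d less defects condensed} at $\ctr{k-1},\dotsc,\ctr 2$ in turn. Each step preserves all later defects, and $\pi$ had none before $\ctr k$, so the result has exactly $d-1$ defects. Everything happens inside $F$ by swapping subpaths, so the network is never decomposed and condensed edges cost nothing.

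You instead induct on the number of internal vertices, using a cut-and-reglue principle. For a fixed cut type $u$, the defect count splits as the count of the initial piece in $\Pi_u(F')$ plus a count determined by the later part, and the inductive hypothesis on $F'$ replaces the backward sweep. This isolates a reusable additivity statement, but it obliges you to prove the gluing step you defer. Your choice of $k$ largest is not actually needed (in the case $d=c$, $\phi_k$ already lowers the count on $\ctr k,\dotsc,\ctr m$ by one), and it is what creates the extra case $d-c\ge 1$. With $k$ smallest only the case $d=c$ occurs, and your proof becomes the paper's with induction in place of Proposition~\ref{p: d less defects condensed}.

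The deferred step does hold, with one adjustment to how you phrase it. Suppose an edge of multiplicity $p>1$ joins $\ctr a$ to $\ctr b$ with $a<k\le b$. Then the restriction of $\pi$ to $F'$ is not determined by $\pi$: the $p$ paths on that edge can be sent to the corresponding $p$ sinks of $F'$ in $p!$ ways. Fix any one assignment, and with it $u$. Reglue a new initial piece of type $u$ back through the same condensed edge. Every path crosses the cut on exactly one edge, so the edge on which each path enters $\ctr b$ ($b\ge k$), including the $\sim$ relations, is a function of $u$ and the later part alone. By Lemma~\ref{l: alt defect defn}, that is all your argument needs.
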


\begin{proof}
%\begin{proof}
    It suffices to show that for $\Pi_{v,d}(F)$ nonempty, we also have $\Pi_{v,d-1}(F)$ nonempty. Take $\pi \in \Pi_{v,d}(F)$. Suppose $k$ is the smallest value such that there exists some defect of the form $(\pi_i, \pi_j, k)$. By Proposition $\ref{p: one less defect condensed-adjusted}$, we have that $\hat\pi = \phi_k(\pi)\in \Pi_{v}(F)$ satisfies
    \begin{enumerate}[(i)]
        \item $\hat\pi \in \Pi_v(F)$,
        \item $\hat\pi$ and $\pi$ have the same numbers of defects at 
        $x_{k+1}, \dotsc, x_m$,
        \item $\hat\pi$ has one fewer defect at $x_k$ than $\pi$ has.
        \end{enumerate}
%        for each $p > k$, $\{(i,j) \,|\, (\hat\pi_i, \hat\pi_j, p) \text{ defective}\} = \{(i,j) \,|\, (\pi_i, \pi_j, p) \text{ defective}\}$,      
%        \item $\#\{(i,j) \,|\, (\hat\pi_i, \hat\pi_j, k) \text{ defective}\} = \#\{(i,j) \,|\, (\pi_i, \pi_j, k) \text{ defective}\} - 1$. 
%    \end{enumerate}
While $\hat \pi$ may have defects at $x_2,\dotsc,x_{k-1}$,
we create a new path family by removing these defects from $\hat \pi$, one vertex at a time.
%as follows.
Let $\pi^{(k)} = \hat \pi$ and construct path families 
\begin{equation*}
    \pi^{(k-1)},\, \pi^{(k-2)},\, \dots, \pi^{(2)} = \sigma
\end{equation*}
as follows. For $h = k-1, \dots, 2$, do
\begin{enumerate}
    \item Let $d_h$ be the number of defects of the form $(\pi_i^{(h+1)},\, \pi_j^{(h+1)},\, h)$ in $\pi^{(h+1)}$.
    \item Define $\pi^{(h)} = \phi_h^{d_h}(\pi^{(h+1)})$.
\end{enumerate}
By Proposition \ref{p: d less defects condensed}, each path family $\pi^{(h)}, h = k-1, \dots, 2$ has type $v$, has no defects at $\ctr h$, and has the same number of defects at $\ctr{h+1}, \dots, \ctr m$ as $\pi^{(h+1)}$. It follows that $\sigma = \pi^{(2)}$, like $\pi$, has no defects at $\ctr 2, \dots, \ctr{k - 1}$. By Observation \ref{o:defect}, neither $\sigma$ nor $\pi$ has a defect at $\ctr 1$. Furthermore, $\sigma$ has one fewer defect at $\ctr k$ than $\pi$ has, and has exactly the same number of defects at $\ctr{k+1}, \dots, \ctr{m}$ as $\pi$ has. Thus we have $\sigma \in \Pi_{v, d-1}(F)$, as desired.
% Let $\pi^{(k)} = \hat\pi$ and
% let $d_{k-1}$ be the number of defects of the form $(\pi^{(k)}_i, \pi^{(k)}_j, k-1)$ in $\pi^{(k)}$.
% Remove all of these defects by applying $\phi_{k-1}^{d_{k-1}}$ to $\pi^{(k)}$,
% and call the resulting path family $\pi^{(k-1)}$. By Proposition \ref{p: d less defects condensed}, $\pi^{(k-1)}$ has type $w$, has no defects at $\ctr{k-1}$, and has the same number of defects at $\ctr k, \dots, \ctr m$ as $\hat\pi$ has. 
% Continue this way, removing defects at vertices $x_{k-2}, \dotsc, x_2$ until
% producing a path family $\sigma$ having one fewer defect at $x_k$ than $\pi$ has,
% and the same number of defects at $x_{k+1},\dotsc, x_m$ as $\pi$ has.
\end{proof}

If a star network $F$ can be covered by a path family of type $w$ having 0 defects, then this path family is unique.
% Such a path family is in fact unique.

\begin{cor}\label{c: unique zero defect-condensed}
    For any fixed $v \in \sn$ and star network $F$ as in \eqref{eq:starnetwork}, 
    % we have $|\Pi_{w,0}(F)| \leq 1$. 
    the number of path families of type $v$ and having $0$ defects is at most one. If $v = e$, then this number is exactly one.
\end{cor}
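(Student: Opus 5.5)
The plan is to treat the two assertions separately: existence for $v=e$ is immediate, and uniqueness goes by induction on the number $m$ of simple star networks, after first reducing to ordinary (uncondensed) concatenation.

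For $v=e$, I take the unique noncrossing path family covering $F$, noted at the end of Section~\ref{s:snplanar}; it has type $e$. Since no two of its paths ever cross, no pair has crossed an odd number of times before meeting, so Definition~\ref{d:defect} gives it no defects. Together with the uniqueness below, this shows the count for $v=e$ is exactly one.

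\emph{Reduction to ordinary concatenation.} Write $F = F_{[a_1,b_1]} \cdots F_{[a_m,b_m]}$ and let $G = F_{[a_1,b_1]} \circ \cdots \circ F_{[a_m,b_m]}$. Repeated use of the map $\theta$ from the proof of Proposition~\ref{p:qfact} gives a surjection $\Pi(G)\to\Pi(F)$ that preserves type. By \eqref{eq:preimage}, each preimage $\theta^{-1}(\sigma)$ satisfies $\sum q^{\dfct(\pi)} = [p]_q!\,q^{\dfct(\sigma)}$, and $[p]_q!$ has constant term $1$. Hence each $\sigma\in\Pi_{v,0}(F)$ has exactly one $0$-defect preimage in $\Pi_{v,0}(G)$, so $|\Pi_{v,0}(F)|\le|\Pi_{v,0}(G)|$. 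It therefore suffices to prove $|\Pi_{v,0}(G)|\le 1$, where every edge has multiplicity $1$.

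\emph{Induction on $m$.} For $m\le 1$ there are no defects by Observation~\ref{o:defect}, and a path family is determined by its type, so the claim holds. For $m\ge 2$, fix $\pi\in\Pi_{v,0}(G)$ and split $G$ just before $x_m$, so that $G = G'\circ F_{[a_m,b_m]}$ with $G' = F_{[a_1,b_1]}\circ\cdots\circ F_{[a_{m-1},b_{m-1}]}$. Since all edges have multiplicity $1$, the paths of $\pi$ entering $x_m$ use distinct incoming edges, and these edges correspond to sinks $a_m,\dots,b_m$ of $G'$. By Lemma~\ref{l: alt defect defn}, having no defect at $x_m$ means that for $i<j$ both passing through $x_m$ we have $\pi_i\prec_m\pi_j$. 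Thus the paths through $x_m$ enter it in increasing order of index.
\begin{itemize}
\item The set of indices $i$ with $\pi_i$ through $x_m$ is $\{i : v_i\in[a_m,b_m]\}$, which depends only on $v$.
\item The remaining paths go straight to sink $v_i$ through the last layer.
\end{itemize}
Hence the restriction $\pi'$ of $\pi$ to $G'$ has a type $u$ determined by $v$ alone: the $i$ with $v_i\in[a_m,b_m]$ are sent to $a_m,\dots,b_m$ in increasing order, and all other $i$ are sent to $v_i$. Moreover, the restriction $\pi'$ covers $G'$ and has no defects, since defects at $x_1,\dots,x_{m-1}$ depend only on the portion of the paths to the left of those vertices.

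By induction $\pi'$ is the unique element of $\Pi_{u,0}(G')$. Then $\pi$ is recovered from $\pi'$: through $x_m$, each path exits to its prescribed sink $v_i$. So $\pi$ is unique.

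The main obstacle is the reduction step: checking that $\theta$ respects the truncation bookkeeping, and that \eqref{eq:preimage} really forces a unique $0$-defect lift. If this proves delicate, I would instead run the same right-to-left induction directly on $F$. There, at a multiplicity-$p$ edge into $x_m$ the $p$ paths are interchangeable, so one must argue that sink labels of the truncated network refer to edges rather than individual paths.
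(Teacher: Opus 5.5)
Your proof is correct, and it takes a genuinely different route from the paper's.

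\textbf{The paper's route.} The paper works directly in the possibly condensed network $F$ and inducts on $\ell(v)$. For $\pi\in\Pi_{v,0}(F)$ and a left descent $s_i$ of $v$, the paths $\pi_i,\pi_{i+1}$ cross exactly once. Swapping their tails after that crossing gives a $0$-defect family of type $s_iv$; this is checked by a case analysis of triples involving $\pi'_i$ or $\pi'_{i+1}$. The map is invertible by re-crossing the rightmost intersection component, so uniqueness propagates down to $v=e$. There any crossing pair would have to recross and create a defect. The paper also cites Theorem~\ref{t:decrease defects by 1}, but only for existence, which the statement does not need.

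\textbf{Your route.} You pass once to the uncondensed network $G$. Because $[p]_q!$ has constant term $1$ in \eqref{eq:preimage}, this gives an injection, in fact a bijection, $\Pi_{v,0}(F)\to\Pi_{v,0}(G)$. You then peel stars off the right: with all edges of multiplicity $1$, absence of defects at $x_m$ forces the paths through $x_m$ to enter in increasing index order. Hence the type of the truncation is determined by $v$.

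\textbf{Comparison.} Your approach avoids the paper's case analysis of whether uncrossing creates new defects. It also essentially gives an explicit right-to-left sorting procedure producing the only candidate $0$-defect family. Existence is then decided by whether that procedure reaches the identity. The price is the reduction step. In a condensed network, paths sharing a multiplicity-$p$ edge into $x_m$ are $\sim$-equivalent, so neither the ordering argument nor truncation at the sinks of the left factor makes sense. The paper's tail swap never truncates, so it needs no reduction.

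\textbf{Your stated obstacle.} The obstacle you flag at the end is not really one. The reduction is applied once, globally, before the induction starts, so $\theta$ never interacts with truncation, and your fallback is unnecessary. The only technicality is that condensing one parallel class at a time may pass through networks not literally built from $\circ$ and $\bullet$. This is harmless: the proof of \eqref{eq:preimage} is local, comparing entry orders at each internal vertex, so it applies verbatim. Alternatively, condense all classes simultaneously and obtain a product of $q$-factorials, which still has constant term $1$.
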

% \begin{proof}
% % ($\star$ Update this proof so that it applies to condensed concatenations.) 
% Let $F=F_{[a_1,b_1]}\bullet\cdots \bullet F_{[a_m,b_m]}$. If $\Pi_w(F)$ is nonempty, then by Theorem~\ref{t:decrease defects by 1}
% %Lemma~\ref{l: zero defect} 
% we can choose $\pi \in \Pi_{w,0}(F)$. Since $\pi$ is a path family of type $w$, each path $\pi_i$ terminates at $\snk i m (F)$. All paths which don't enter the internal vertex of $F_j$, for any $1 \leq j \leq m$, clearly enter and terminate at the same source and sink vertex of $F_j$. Those which do enter the internal vertex of $F_j$ must enter through the source vertices of $F_j$ in sorted order (including those share an edge?), otherwise, some pair which enters the vertex out of order would create a defect, which $\pi$ has none. Since every path $\pi_i$ terminates at $\snk i m$, there's only one source vertex it could have entered from to ensure $0$ defects, as described by the sorted property above (Lemma~\ref{l:condensed zero defects}?). Then $\snk i {m-1}$ is set, and so must be $\src i {m-1}$. Applying this property backwards all the way to $F_1$, we see that $\pi$ is uniquely determined by its type $w$ and its planar network $F$, thus $|\Pi_{w,0}(F)| \leq 1$.

% It is clear that $\Pi(F)$ contains exactly one path family with no crossings. This path family has type $e$ and no defects.
% \end{proof}

% ($\star$ Look over text from previous meeting and decide structure of lemmas/claims)
\begin{proof}
If $\Pi_v(F)$ is empty, then clearly no path family of type $v$ in $F$ has $0$ defects.  Assume therefore that $\Pi_v(F)$ is nonempty.  By Theorem~\ref{t:decrease defects by 1} this set contains at least one path family having $0$ defects.

Consider the case $v=e$.  If $\pi \in \Pi_e(F)$ has two paths which cross, then those paths must later cross again, creating a defect.  Thus there is a unique path family in $\Pi_e(F)$ having no defects: the path family having no crossings.  

Suppose now that the claim is true for 
%path families in $\Pi_w(F)$, where 
$v$ of length $0,\dotsc, \ell-1$ and consider $v$ of length $\ell$.
%with a crossing must have acontains exactly one path family with no crossings and therefore no defects.  Any two paths 
%Suppose we have $\pi$, $\sigma$ in $\Pi_e(F)$ both with zero defects.
Let $s_i$ be a left descent of $v$, i.e.,
$s_iv < v$, equivalently, $v_i > v_{i+1}$. 
Choose
$\pi \in \Pi_{v, 0}(F)$ 
and let $(\ctr{p_1}, \dots, \ctr{p_2})$ be the unique crossing of 
$\pi_i, \pi_{i+1}$. 
Define $\pi' \in \Pi_{s_iv}(F)$ 
by swapping $\ctr{p_2}$-to-sink subpaths of $\pi_i, \pi_{i+1}$.  
We will show that the map $\pi \mapsto \pi'$ is a bijection from $\Pi_{v, 0}(F)$ to $\Pi_{s_iv, 0}(F)$. By induction, the uniqueness of $\pi \in \Pi_{v, 0}(F)$ will follow.
% We claim that $\pi'$ has $0$ defects. 

    First we claim that $\dfct(\pi') = 0$. To see this, suppose  that $(\pi'_h, \pi'_j, q)$ is a defect of $\pi'$ and consider the sets $\{h, j\}, \{i, i+1\}$.
    The sets cannot be equal since $\pi'_i, \pi'_{i+1}$ do not cross. 
    On the other hand, the sets cannot be disjoint, since the equalities $\pi_h = \pi'_h$ and $\pi_j = \pi'_j$ would imply a defect in $\pi$. We therefore have
    \begin{equation}\label{eq: hiji+1}
        h < i \text{ and } j \in \{i, i+1\} \quad \text{or} \quad h \in \{i, i+1\} \text{ and } i+1 < j.
    \end{equation}
    % Consider the case where $h < i$ and $j \in \{i, i+1\}$.
    
    If $h < i = j$, then $(\pi_h', \pi_i', q)$ is a defect of $\pi'$. By Lemma~\ref{l: alt defect defn} we have $\pi'_i \prec \pi'_h$. If $q \leq p_2$ then this defect in $\pi'$ implies the triple $(\pi_h, \pi_i, q)$ to be defective in $\pi$;
    if $q > p_2$, then 
    %the triple
    %We must have $q > p_2$,
%    can't have $q \leq p_2$ 
%because otherwise this would imply the existence of a defect in $\pi$.  
it follows that $\pi_{i+1} \prec \pi_h$ and $(\pi_h, \pi_{i+1}, q)$ is defective in $\pi$.
If on the other hand $h < i$ and $i+1 = j$, then $(\pi_h', \pi_{i+1}', q)$ is a defect of $\pi'$ and $\pi'_{i+1} \prec \pi'_h$. Again, we have $q > p_2$ and it follows that $\pi_{i} \prec \pi_h$. By Lemma \ref{l: alt defect defn}, $(\pi_h, \pi_{i}, q)$ is a defect of $\pi$. Since $\dfct(\pi) = 0$, all of these cases lead to a contradiction. For the remaining cases in \eqref{eq: hiji+1}, the argument is similar.

   % ($\star$ Do we need to say anything else about other triples $(\pi'_j, \pi'_k, q)$ for $j, k \notin \{i, i+1\}$ or $\{j, k\} = \{i, i+1\}$?)
% \end{proof}

%\begin{lem}
    Now we claim that the map $\pi \mapsto \pi'$ from $\Pi_{v, 0}(F)$ to $\Pi_{s_iv, 0}(F)$ is invertible:
%\end{lem}
%\begin{proof}
    its inverse is given by finding the rightmost component $(\ctr{p_1}, \dots, \ctr{p_2})$ of intersection of $\pi_i', \pi_{i+1}'$ and making it into a crossing. The lack of defects in $\pi$ implies that $(\ctr{p_1}, \dots, \ctr{p_2})$ is the rightmost component of intersection of $\pi_i, \pi_{i+1}$ as well. By the inductive hypothesis, $\pi'$ is the unique path family in $\Pi_{s_iv, 0}(F)$, so $\pi$ is the unique path family in $\Pi_{v, 0}(F)$.
\end{proof}

\begin{cor}
\label{c:onedefect-condensed}
    Fix star network $F$ as in \eqref{eq:starnetwork}.
    %Let $F = F_{[a_1,b_1]} \bullet \cdots \bullet F_{[a_m,b_m]}$.
    If $\Pi_e(F)$ contains more than one path family, then it contains a path family having exactly one defect.
\end{cor}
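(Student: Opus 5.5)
We would argue directly from Corollary~\ref{c: unique zero defect-condensed} and Theorem~\ref{t:decrease defects by 1}. Suppose $\Pi_e(F)$ contains at least two path families. By Corollary~\ref{c: unique zero defect-condensed}, exactly one of them has $0$ defects: the noncrossing family. Hence some family $\pi \in \Pi_e(F)$ has $\dfct(\pi) = d \geq 1$, so $\Pi_{e,d}(F)$ is nonempty for some $d \geq 1$.

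Theorem~\ref{t:decrease defects by 1}, applied with $v = e$ and this $d$, then gives that $\Pi_{e,0}(F), \dotsc, \Pi_{e,d-1}(F)$ are all nonempty. Since $d \geq 1$, this list includes $\Pi_{e,1}(F)$ whenever $d \geq 2$. If $d = 1$, then $\pi$ itself lies in $\Pi_{e,1}(F)$. In either case $\Pi_e(F)$ contains a family with exactly one defect.

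The only point needing care is the claim that a second element of $\Pi_e(F)$ must have positive defect. This is exactly the uniqueness statement of Corollary~\ref{c: unique zero defect-condensed} for $v = e$, namely that at most one family of type $e$ has $0$ defects, so no real obstacle remains. All the substantive work has already been done in Theorem~\ref{t:decrease defects by 1}.
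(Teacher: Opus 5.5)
Your proof is correct and is essentially the paper's own argument: Corollary~\ref{c: unique zero defect-condensed} forces a second element of $\Pi_e(F)$ to have $d \geq 1$ defects, and Theorem~\ref{t:decrease defects by 1} then produces an element of $\Pi_{e,1}(F)$. You also handle the case $d = 1$ explicitly: there the theorem only yields $\Pi_{e,0}(F)$, but the chosen family itself already lies in $\Pi_{e,1}(F)$, a step the paper leaves implicit.
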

\begin{proof}
    Suppose $\Pi_e(F)$ contains at least two path families.
    By Corollary~\ref{c: unique zero defect-condensed}, one element of $\Pi_e(F)$ %of these 
    is the unique 
    %(noncrossing) 
    element of $\Pi_{e,0}(F)$.
    Choose another path family in $\Pi_e(F)$ and let $d \geq 1$ be the 
    number of its defects.  By Theorem~\ref{t:decrease defects by 1},
    the set $\Pi_{e,1}(F)$ is nonempty.
\end{proof}

\section{Main Result}\label{s:nofactor}

Every polynomial in $\mathbb N[q]$ with constant term $1$ arises as a Kazhdan--Lusztig polynomial~\cite{PoloConstrArb}.  Gaetz--Gao~\cite{GGMinPower} studied the sequences of coefficients in these polynomials, especially coefficients equal to %and the appearances of 
$0$ 
%as a coefficient 
between other
%the first and last 
nonzero coefficients.
%($\star$ Mention the connection between Kahzdan--Lusztig polynomials and smoothness in Schubert varieties.)

Define a function $\singdeg: \sn \rightarrow \mathbb N \cup \{\infty\}$ by
\begin{equation}\label{eq:gammadef}
\singdeg(w) = \begin{cases}
    \min\{ k > 0 \,|\, 
    %\text{$q^k$ has nonzero coefficient in $P_{v,w}(q)$}\}
    \text{coefficient of $q^k$ in $P_{e,w}(q)$ is nonzero}\} 
    &\text{if $P_{e,w}(q) \neq 1$},\\
    \infty &\text{if $P_{e,w}(q) = 1$}.
    \end{cases}    
\end{equation}
This is %essentially 
a lower bound on degrees for which
%the first degree in which 
Poincar\'e duality fails in the Schubert variety $X_w$~\cite{KLSchub}, and can be computed in terms of patterns in $w$ and a related definition.
%($\star$ say more about smoothness and singularities and Poincar\'e duality.)
%($\star$ Who first considered this?)
Specifically, given $w \in \sn$
%and $w$ 
not avoiding the pattern $3412$,
%and $u \in \mfs{k}$, 
define the %Cortez 
%$(b,u)$-
3412-{\em gap} of $w$
by
%to be the number
\begin{equation}\label{eq:cortezgap}
\pgap(w) = \min\{ w_{i_1} - w_{i_4} \,|\, \text{subword $w_{i_1} w_{i_2} w_{i_3} w_{i_4}$ 
%a subword of $w$ 
matches the pattern $3412$} \}.
\end{equation}
%($\star$ Improve this definition.)
%($\star$ Was this Cortez's idea?  See reference in Gaetz--Gao bibliography.  At the very least, mention that she and Wu used it.)
For $w$ \avoidingp, we have $\singdeg(w) = \infty$.  Otherwise, we can compute $\singdeg(w)$ in terms of $\pgap(w)$ as follows
%The Gaetz--Gao result
~\cite[Thm.\,1.6]{GGMinPower}.
%relates pattern avoidance in $w$ to the smallest positive power of $q$ that appears in the Kazhdan--Lusztig polynomial $P_{e,w}(q)$, assuming this polynomial is nonconstant.
\begin{thm}\label{t:GaetzGao}
    For $w$ not \avoidingp{}
    %the pattern $3412$ 
    %and satisfying $P_{e,w}(q) \neq 1$, 
    we have
    \begin{equation*}
        \singdeg(w) = \begin{cases}
            \pgap(w) &\text{if $w$ avoids the pattern $4231$},\\
            1 &\text{otherwise}.
            % &\text{if $w$ 
            % does not avoid
            % %contains a subword which matches 
            % the pattern $4231$},\\
            % \pgap(w) &\text{otherwise}.
        \end{cases}
    \end{equation*}
\end{thm}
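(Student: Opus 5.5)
The plan is to prove the two cases of Theorem~\ref{t:GaetzGao} separately, in each case bounding $\singdeg(w)$ from above and from below. Only the lower bound in the $4231$-avoiding case should be hard.

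\emph{Case: $w$ contains $4231$.} Here I would show directly that the coefficient of $q$ in $P_{e,w}(q)$ is positive. I would use the standard formula for the linear coefficient of a Kazhdan--Lusztig polynomial with $v=e$:
\begin{equation*}
[q]\,P_{e,w}(q) = \#\{u \leq w \,|\, \ell(u) = \ell(w)-1\} - \#\{u \leq w \,|\, \ell(u) = 1\}.
\end{equation*}
This is the difference between the number of coatoms and the number of atoms of the Bruhat interval $[e,w]$. It follows from the recursive definition in \cite{KLRepCH} by comparing the $q^{\ell(w)-1}$ terms in the $R$-polynomial identity.

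Next I would invoke the Carrell--Peterson phenomenon: the rank-generating function of $[e,w]$ is palindromic if and only if $w$ avoids $3412$ and $4231$, so that $X_w$ is rationally smooth, which in type $\msfA$ is the same as smooth. Palindromicity alone does not give a strict inequality. Instead I would use the explicit count of coatoms of $[e,w]$: coatoms are the elements $w t_{ab}$ with $w_a > w_b$ and no $c$ with $a<c<b$ and $w_a > w_c > w_b$. The atoms are the simple reflections in the support of $w$. Given an occurrence $w_{i_1}w_{i_2}w_{i_3}w_{i_4}$ of $4231$, I would exhibit strictly more coatoms than atoms. The route is to reduce to the pattern $4231$ itself, whose Kazhdan--Lusztig polynomial is $1+q$. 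Pattern domination (Billey--Braden: $P_{e,w}$ coefficientwise dominates $P_{e,u}$ for a pattern $u$ in $w$, up to the appropriate parabolic factor) then gives $[q]P_{e,w} \geq 1$. Since $\singdeg(w)\geq 1$ always, this yields $\singdeg(w)=1$.

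\emph{Case: $w$ avoids $4231$ but contains $3412$.} Let $g = \pgap(w)$.

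For the upper bound $\singdeg(w)\le g$, I would choose a $3412$-occurrence realizing the gap. The letters of $w$ with values strictly between $w_{i_4}$ and $w_{i_1}$ are forced, by $4231$-avoidance, to sit in positions giving a pattern $u$ of the form
\begin{equation*}
(g+1)\,(g+2)\cdots\, \,1\,2\cdots
\end{equation*}
that is, a Grassmannian-type permutation whose Kazhdan--Lusztig polynomial is $1+q^{g}$. I expect this to be checkable via the Lascoux--Sch\"utzenberger tree formula for covexillary permutations. Billey--Braden domination then gives $[q^g]P_{e,w}\ge 1$.

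For the lower bound, I must show that $[q^k]P_{e,w}=0$ for $1\le k<g$. This is the main obstacle. My first attempt would be to use the fact that for $4231$-avoiding $w$ the singular locus of $X_w$ is described by $3412$-configurations (Cortez, Billey--Warrington, Manivel). I would then take a small resolution, or a Zelevinsky-type resolution, of $X_w$ near the most singular $T$-fixed point $e$. The local intersection cohomology would then be computed as a product or iterated fibration of Grassmannian pieces, each contributing only terms $q^{j}$ with $j\ge$ the corresponding gap.

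Failing that, I would attempt induction on $\ell(w)$. The step would use the recursion
\begin{equation*}
\wtc{s}q\,\wtc{w'}q = \wtc{w}q + \sum_{z} \mu(z,w')\,q^{(\ell(w')-\ell(z)+1)/2}\,\wtc{z}q
\end{equation*}
for a descent $s$ with $w = sw'$. The key point to verify is that each correction term $z$ has $\pgap(z)\ge g$, or that its contribution enters only in degree at least $g$. Together with $\singdeg(w)\le g$ from above, this would prove $\singdeg(w)=g$.
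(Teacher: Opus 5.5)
The paper does not prove this statement; it quotes it from Gaetz--Gao \cite[Thm.\,1.6]{GGMinPower}. Your argument therefore has to stand on its own, and it has a genuine gap.

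\textbf{The domination principle is false.} Both your $4231$ case and your upper bound $\singdeg(w)\le\pgap(w)$ rest on a coefficientwise domination $P_{e,w}(q)\ge P_{e,u}(q)$ whenever $w$ contains the pattern $u$. The Billey--Braden pattern bound is an inequality of values at $q=1$. At $q=1$ it only gives $P_{e,w}\ne 1$, which says nothing about \emph{which} coefficient is nonzero. The paper's own example refutes the graded version: $45312$ contains $4512$, which matches $3412$, yet $P_{e,45312}(q)=1+q^2$ does not dominate $P_{e,3412}(q)=1+q$. Indeed, graded domination would force $\singdeg(w)=1$ for every $w$ containing $3412$, contradicting the case $\singdeg(w)=\pgap(w)>1$ of the very theorem you are proving.

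Consequences for the two cases you treat:
\begin{enumerate}
\item In the $4231$ case you still owe the inequality $f_{\ell(w)-1}>f_1$, where $f_i$ counts the elements of length $i$ in $[e,w]$. Your identity $[q]P_{e,w}=f_{\ell(w)-1}-f_1$ is correct (it follows from palindromicity of $\sum_{v\le w}q^{\ell(v)}P_{v,w}(q)$), but you never exhibit the surplus of coatoms.
\item The upper bound is unsupported for the same reason. The pattern you would use is also misidentified. Minimality of the gap forces the values strictly between $w_{i_4}$ and $w_{i_1}$ to sit, in decreasing order, between positions $i_2$ and $i_3$. For $g=2$ this yields $45312$, which is neither Grassmannian nor covexillary. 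So neither the Grassmannian nor the covexillary formula applies.
\end{enumerate}

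\textbf{The lower bound is left open.} The vanishing $[q^j]P_{e,w}=0$ for $1\le j<\pgap(w)$ is the substance of the theorem. The geometric route is only a hope: you neither construct the resolution nor show that it is small. The inductive route fails as stated. Taking $T_e$-coefficients in your recursion gives $P_{e,w}=P_{e,w'}+qP_{s,w'}-\sum_z\mu(z,w')q^{(\ell(w')-\ell(z)+1)/2}P_{e,z}$. Each correction term already contributes through the constant term of $P_{e,z}$ (in degree $1$ whenever $z$ is a coatom of $w'$ with $sz<z$), whatever $\pgap(z)$ is. What is needed is exact cancellation in low degrees, not a bound on $\pgap(z)$.

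\textbf{A workable reduction.} In the spirit of Bj\"orner--Ekedahl, combine palindromicity of $\sum_{v\le w}q^{\ell(v)}P_{v,w}(q)$ with coefficientwise monotonicity of $P_{v,w}$ in $v$ (Braden--MacPherson). Together these give $\singdeg(w)=\min\{i\ge 1\mid f_i\ne f_{\ell(w)-i}\}$, which turns the theorem into a statement about rank sizes of $[e,w]$. Proving that combinatorial statement, both for $4231$-containing $w$ and for $4231$-avoiding $w$ with $3412$-gap $g$, is exactly the work your proposal does not do.
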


%($\star$ Do we have the following corollary? Or maybe that corollary isn't so much simpler than the theorem.)

%\begin{cor}\label{c:gamma>1}
%    We have $\singdeg(w) > 1$ if $w$ avoids the patterns $4231$, $34125$, $34512$, $53412$, $45123$, $45231$, $14523$, and does not avoid 
%    %contains a subword which matches 
%    the pattern $45312$.
%\end{cor}
For example, consider the permutation $45312 \in \mfs 5$, %and the Kazhdan--Lusztig polynomial $P_{e,45312}(q) = 1+q^2$.
%Since $45312$
which 
avoids the pattern $4231$ and has
only the subword $4512$ matching the pattern $3412$. Since $\pgap(45312)=2$, Theorem~\ref{t:GaetzGao} implies that the coefficient of $q$ in $P_{e,45312}(q)$ is $0$ and that the coefficient of $q^2$ is not.
%and thus we
%have $\pgap(45312) = 4-2 = 2$.
%It follows that $P_{e,45312}(q)$ has the form $1 + 0q + a_2q^2 + \cdots + a_k q^k$ with $a_2 > 0$
%\in \mathbb N[q]$ 
%for some $k \geq 2$.
This is consistent with the fact that $P_{e,45312}(q) = 1+q^2$. (See \cite[p.\,75]{BilleyLak}.)

%Our theorem says that f
For each permutation $w$ having $\singdeg(w) > 1$,
%$w$ satisfying the conditions of Corollary~\ref{c:gamma>1},
the Kazhdan--Lusztig basis element $\smash{\wtc wq}$ has no reversal factorization 
%as in 
\eqref{eq:reversalfactor} and therefore is not graphically representable 
by a star network, in the sense of (\ref{eq:Gtozhnq}).
\begin{thm}\label{t:main}
    For $w \in \sn$ 
    %having $\singdeg(w) > 1$,
    %avoiding the patterns $4231$, $34125$, $34512$, $53412$,
    %and 
    avoiding the pattern $4231$, 
    not avoiding the pattern $3412$,
    and having $\pgap(w) > 1$, 
    the Kazhdan--Lusztig basis element $\wtc wq$ 
    %is not graphically representable by a star network, and therefore 
    has no reversal factorization.
\end{thm}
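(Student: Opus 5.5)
We argue by contradiction: suppose $\wtc wq$ has a reversal factorization \eqref{eq:reversalfactor} for some interval sequence $\mathcal A$. We will produce a path family with exactly one defect, forcing $q^1$ to appear in $P_{e,w}(q)$. This contradicts Theorem~\ref{t:GaetzGao}.

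Set $F = F_{[a_1,b_1]} \bullet \cdots \bullet F_{[a_m,b_m]}$. Combining the identity $\wtc wq = \frac{1}{f_{\mathcal A}(q)}\wtc{s_{[a_1,b_1]}}q \cdots \wtc{s_{[a_m,b_m]}}q$ with \cite[Cor.\,5.3]{CSkanTNNChar} and Proposition~\ref{p:qfact}, as in the proposition following \eqref{eq:reversalfactor}, we obtain
\begin{equation*}
P_{e,w}(q) = \sum_{d\ge 0} |\Pi_{e,d}(F)|\,q^d.
\end{equation*}
Two points need care here. First, we must check that the multiplicities $\mu(x_i,x_j)$ of the condensed network coincide with the cardinalities $|I_{i,j}|$ in \eqref{eq:intervaloverlap}. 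An edge from $x_i$ to $x_j$ in the plain concatenation corresponds to a level lying in both $[a_i,b_i]$ and $[a_j,b_j]$ but in no intermediate interval, so the identification holds. Second, the condensation is applied pair by pair, so Proposition~\ref{p:qfact} must be iterated once for each pair $(x_i,x_j)$. Each application replaces one bundle of parallel edges by a single edge of multiplicity $p$, and the resulting $q$-factorials multiply to give $f_{\mathcal A}(q)$.

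Now use the hypotheses on $w$. Since $w$ avoids $4231$, does not avoid $3412$, and has $\pgap(w)>1$, Theorem~\ref{t:GaetzGao} gives $\singdeg(w) = \pgap(w) \ge 2$. Since $w$ contains $3412$, we also have $P_{e,w}(q) \ne 1$. Two consequences follow.
\begin{enumerate}
\item The coefficient of $q$ in $P_{e,w}(q)$ is $0$, so $\Pi_{e,1}(F) = \emptyset$.
\item Some coefficient of $q^d$ with $d \ge 2$ is nonzero, so $\Pi_{e,d}(F) \ne \emptyset$ for some $d\ge 2$.
\end{enumerate}
By consequence (2), $\Pi_e(F)$ contains a path family other than the unique defect-free one from Corollary~\ref{c: unique zero defect-condensed}. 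Corollary~\ref{c:onedefect-condensed} (equivalently, Theorem~\ref{t:decrease defects by 1} applied with $v=e$) then yields a family in $\Pi_{e,1}(F)$. This contradicts consequence (1), so no reversal factorization exists.

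The substantive work is already done in Theorem~\ref{t:decrease defects by 1}. The main obstacle here is the bookkeeping in the first step, namely identifying $f_{\mathcal A}(q)$ with the product of $q$-factorials $\mu(x_i,x_j)_q!$ for the condensed network, so that $F$ represents $\wtc wq$ exactly. I would handle this by induction on $m$. Adding one interval $[a_m,b_m]$ creates new edges from $x_m$ back to each earlier $x_i$, one for each level in $[a_i,b_i]\cap[a_m,b_m]$ not covered by any later interval. This set is precisely $I_{i,m}$.
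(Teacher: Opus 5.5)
Your argument is correct and is essentially the paper's proof. Assuming a reversal factorization, the condensed star network $F$ represents $\wtc wq$, and Theorem~\ref{t:GaetzGao} forces $\Pi_{e,1}(F)=\emptyset$ while $\Pi_{e,d}(F)\neq\emptyset$ for some $d\ge 2$, contradicting Theorem~\ref{t:decrease defects by 1}, which you invoke through Corollary~\ref{c:onedefect-condensed}; the identification of $f_{\mathcal A}(q)$ with $\prod_{i<j}\mu(x_i,x_j)_q!$ is already handled by the paper's proposition following \eqref{eq:reversalfactor}, so you can cite it rather than reprove it.
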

\begin{proof}
Fix $w$ as above with $k = \pgap(w)$ and suppose that the star network $F$
graphically represents $\smash{\wtc wq}$ as an element of $\hnq$,
\begin{equation}\label{eq:main}
\wtc wq = \sum_{v \in \sn} \sum_{d \geq 0} |\Pi_{v,d}(F)|
%\sum_{\pi \in \Pi_{v,d}(F)} 
q^d T_v = \sum_{v \leq w} P_{v,w}(q) T_v.
\end{equation}
%Then the coefficient of $T_e$ in (\ref{eq:main}) is $P_{e,w}(q)$.
Since the constant term of $P_{e,w}(q)$ is $1$,
%(or by Corollary~\ref{c: unique zero defect}), 
we have $|\Pi_{e,0}(F)| = 1$.
By our definition of $k$
%assumptions on $w$ 
and Theorem~\ref{t:GaetzGao}, the coefficients of $q, \dotsc, q^{k-1}$
in $P_{e,w}(q)$ are $0$, while the coefficient of $q^k$ is positive.
In particular, we have the cardinalities $|\Pi_{e,1}(F)| = \cdots = |\Pi_{e,k-1}(F)| = 0$ and $|\Pi_{e,k}(F)| > 0$, which
% and $|\Pi_e(F)| \geq 2$.
% %which
% But this combination
contradict
Theorem~\ref{t:decrease defects by 1}.
%Corollary~\ref{c:onedefect}.
%    ($\star$ proof by contradiction)
\end{proof}
By Theorem~\ref{t:decrease defects by 1}, reversal factorization of $\wtc wq$ implies that {\em none} of the Kazhdan--Lusztig polynomials $P_{v,w}(q)$ has internal coefficients equal to zero.
\begin{thm}
    If $\wtc wq$ has a reversal factorization,
    %s as in \eqref{eq:reversalfactor},
    then for every $v < w$ there exists $k = k(v)$
    in $\mathbb N$ 
    such that we have $P_{v,w}(q) = 1 + a_1q + \cdots + a_kq^k$ with $a_1,\dotsc, a_k > 0$.
\end{thm}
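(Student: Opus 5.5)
Assume $\wtc wq$ has the reversal factorization \eqref{eq:reversalfactor} for a sequence $\mathcal A$ of intervals, and let $F = F_{[a_1,b_1]} \bullet \cdots \bullet F_{[a_m,b_m]}$ be the condensed star network. The plan is to read off the coefficients of $P_{v,w}(q)$ as numbers of path families and then apply Theorem~\ref{t:decrease defects by 1}, as in the proof of Theorem~\ref{t:main}, but for every $v < w$ instead of only $v = e$.

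First I will check that $F$ graphically represents $\wtc wq$. Since $F$ uses only condensed concatenations, the discussion after \eqref{eq:KLprod} (built on Proposition~\ref{p:qfact}) says $F$ represents the product \eqref{eq:KLprod} divided by $\prod_{i<j} [\mu(x_i,x_j)]_q!$. By \cite[Thm.\,4.3]{SkanNNDCB} this product equals $f_{\mathcal A}(q)$. I will need $\mu(x_i,x_j) = |I_{i,j}|$: the edges joining $x_i$ to $x_j$ in the plain concatenation correspond to the indices lying in $[a_i,b_i]\cap[a_j,b_j]$ and in none of the intermediate intervals. Then the proposition following \eqref{eq:reversalfactor} gives, for every $v$,
\begin{equation*}
P_{v,w}(q) = \sum_{d \geq 0} |\Pi_{v,d}(F)|\, q^d .
\end{equation*}

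Now fix $v < w$. Then $v \leq w$, so $P_{v,w}(q)$ has constant term $1$, which means $|\Pi_{v,0}(F)| = 1$; Corollary~\ref{c: unique zero defect-condensed} confirms this is at most one. Since $P_{v,w}(q)$ is a nonzero polynomial, let $k = k(v) = \deg P_{v,w}(q)$ and write $P_{v,w}(q) = 1 + a_1 q + \cdots + a_k q^k$ with $a_k > 0$. If $k = 0$ there is nothing to prove. If $k \geq 1$, then $\Pi_{v,k}(F)$ is nonempty, so Theorem~\ref{t:decrease defects by 1} makes $\Pi_{v,d}(F)$ nonempty for every $0 \leq d \leq k-1$. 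Hence $a_d = |\Pi_{v,d}(F)| > 0$ for every $d = 1, \dotsc, k$, which is the claim. If $k = 0$ is meant to be excluded, the statement simply reads with an empty list of $a_i$; the degree bound $\deg P_{v,w} < (\ell(w)-\ell(v)-1)/2$ plays no role here.

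The main obstacle is the first step: identifying $F$ with $\wtc wq$ exactly, not just up to scalar. This requires the equality of the normalizing factor $\prod_{i<j}[\mu(x_i,x_j)]_q!$ with $f_{\mathcal A}(q)$. I will cite it from \cite[Thm.\,4.3]{SkanNNDCB}, and the edge-count argument above gives a direct check of it; everything after that is a direct application of Theorem~\ref{t:decrease defects by 1}.
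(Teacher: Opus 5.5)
Your proposal is correct and is essentially the argument the paper intends: express $P_{v,w}(q)$ as the defect generating function $\sum_{d\geq 0}|\Pi_{v,d}(F)|q^d$ of the condensed star network $F$, then apply Theorem~\ref{t:decrease defects by 1} to each $v<w$. The step you flag as the main obstacle is already the unnumbered proposition following \eqref{eq:reversalfactor}, so you can cite it directly; your observation $\mu(x_i,x_j)=|I_{i,j}|$, which gives $\prod_{i<j}[\mu(x_i,x_j)]_q! = f_{\mathcal A}(q)$, is the step that proposition's proof uses without stating it.
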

%\begin{proof}
%    (Condensed concatenation version of previous proofs that a path family with $d$ defects implies the existence of another with $d-1$ defects.)
%\end{proof}
It is easy to show that the inequality $\pgap(w) > 1$ implies that $w$ does not avoid the pattern $45312$. It is also easy to show that no star network graphically represents
%, the impossibility of representing
$\wtc{453129786}q$, even though the subword $9786$ matches the pattern $4231$. Indeed, some limited experimentation~\cite{DatSkan} suggests that avoidance of 
the pattern $45312$ is important and avoidance of the pattern $4231$ is unimportant in the classification of permutations $w$ for which
$\smash{\wtc wq}$ is graphically representable by a star network.
We conjecture the following partial answer to \cite[Quest.\,4.5]{SkanNNDCB}.
%presence 
%Moreover, some experimentation in \cite{DatSkan} suggests that 
%Mention that $P_{e,45312} = 1+q^2$.  (See \cite[p.\,75]{BilleyLak} for %partial table of Kazhdan--Lusztig polynomials when $n = 5$.)
\begin{conj}
    If $w \in \sn$ does not avoid the pattern $45312$, then the Kazhdan--Lusztig basis element $\wtc wq$ has no reversal factorization.
    %\eqref{eq:reversalfactor}.
\end{conj}

For small $n$, it is possible to explicitly enumerate the permutations $w \in \sn$ avoiding the pattern $45312$ and to verify that each corresponding Kazhdan--Lusztig basis element $\wtc wq$ has a reversal factorization~\cite{DatSkan}.
Furthermore, given 
%$w = w_1 \cdots w_n \in \sn$ and 
planar network $F$ which graphically represents
$\wtc wq$, the reflection of $F$ in a horizontal line,
the reflection of $F$ in a vertical line,
and the rotation of $F$ by $180^\circ$ %radians
can graphically represent up to three additional elements 
$\wtc{w_0ww_0}q$, $\wtc{w^{-1}}q$, $\wtc{w_0w^{-1}w_0}q$, 
%$\wtc vq$: $v = w_0ww_0$, $v=w^{-1}$,
%and $v = w_0w^{-1}w_0$, 
respectively, where $w_0$ is the reversal $s_{[1,n]}$.
For example, when $n= 5$, all Kazhdan--Lusztig basis elements except for $\wtc{45312}q$ have reversal factorizations (which need not be unique).
If $w$ avoids the pattern $321$, a factorization of $\wtc wq$ is given by Theorem~\ref{t:BWHex}.
If $w$ \avoidsp, a factorization of $\wtc wq$ is given by
Theorem~\ref{t:SkanSmooth}. For the remaining twenty-three permutations $w$, we have the following 
factorizations and
graphical representations of $\wtc wq$.

\vspace*{8mm}

\hhhsp
\begin{center}
\newcolumntype{R}{>{$}c<{$}}
%\begin{tabularx}{166.5mm}{|R|R|R|R|R|}%
\begin{tabularx}{151.5mm}{|R|R|R|R|}%
\hline
%& & & &\\
& & &\\
& & &\\
w
& \phmat \begin{matrix}
    \phsum \mbox{a reversal} \phsum \\
    \mbox{factorization of } \wtc wq\end{matrix}\phmat
%& \mbox{a factorization of } \wtc wq
& \nTksp \begin{matrix} 
\phsum \mbox{a graphical} \phsum \\  
\quad\mbox{representation
%& \begin{matrix} \nTksp \phsum \nTksp \mbox{a graphical} \phsum \nTksp \\  \mbox{representation}\\ 
of } \wtc wq \end{matrix}
& \begin{matrix} \phint \ntksp \mbox{related permutations} \ntksp \phint \\ w_0ww_0,\ w^{-1}\ntksp,\ w_0w^{-1}w_0 \nTksp \nTksp \phc \nTksp\end{matrix}
%& \theta(C'_w(1)) \in \mathbb N \mathrm{?}  
%& \begin{matrix} \mbox{interpretation of} \\ 
%\mbox{$\theta(C'_w(1))$ as $|R|^{\phantom{\hat T}}\nTksp$ for} \\ 
%\mbox{$w$ avoiding $312^{\phantom{\hat T}}\nTksp$?}\end{matrix} \\
%\mbox{$\theta(C'_w(1))$ as $|R|^{\phantom{\hat T}}\nTksp$} \\ 
%\mbox{for $w^{\phantom{\hat T}}\nTksp$ avoiding} \\
%\mbox{$3412^{\phantom{\hat T}}\nTksp$ and $4231$?}\end{matrix} 
\\
& & &\\
& & &\\
\hline 
& & &\\
& & &\\
42315 & 
\wtc{s_{[1,2]}}q \wtc{s_{[2,4]}}q \wtc{s_{[1,2]}}q
& \Fdbcae
& 15342 \\
& & & \\
& & & \\
 \hline
 & & &\\
 & & &\\
 52314 & 
 \wtc{s_{[1,2]}}q \wtc{s_{[2,4]}}q \wtc{s_{[1,2]}}q \wtc{s_{[4,5]}}q 
 & \Febcad
 & 25341, 42351, 51342 \\
 & & & \\
 & & & \\
\hline
& & &\\
& & &\\
35142 & 
\phm 
\wtc{s_{[2,3]}}q \wtc{s_{[1,2]}}q \wtc{s_{[3,5]}}q \wtc{s_{[2,3]}}q 
\phm
& \Fceadb
& 42513 \\
& & & \\
& & & \\
\hline
& & &\\
& & &\\
35241 & 
\phm 
\wtc{s_{[2,3]}}q \wtc{s_{[3,5]}}q \wtc{s_{[1,3]}}q
%\frac 1{1+q}
%\wtc{s_{[4,5]}}q \wtc{s_{[2,4]}}q \wtc{s_{[1,3]}}q \wtc{s_{[4,5]}}q \phm
& \Fcebda
& 52413, 53142, 42531 \\
& & & \\
& & & \\
\hline
\end{tabularx}
\end{center}
%\vspace{2mm}
\ssp

\newpage
\hhhsp
\begin{center}
\newcolumntype{R}{>{$}c<{$}}
%\begin{tabularx}{166.5mm}{|R|R|R|R|R|}%
\begin{tabularx}{153.5mm}{|R|R|R|R|}%
\hline
%& & & &\\
& & &\\
& & &\\
w
%: \csn \rightarrow \mathbb C 
&  \phmat \begin{matrix}
    \phint \mbox{a reversal} \phint \\
    \phint \mbox{factorization of } \wtc wq\phint \end{matrix}
%& \mbox{a factorization of } \wtc wq
& \nTksp \nTksp \begin{matrix} 
\phsum \mbox{a graphical} \phsum \\  
\phsum \mbox{representation of } \wtc wq \phsum
\end{matrix} \nTksp \nTksp
%\mbox{planar network}
& \begin{matrix} \mbox{related permutations} \\ w_0ww_0,\ w^{-1}\ntksp,\ w_0w^{-1}w_0 \nTksp \nTksp \phc \nTksp \end{matrix}
\\
\hline
& & &\\
& & &\\
35412 & 
\frac 1{1+q}
\wtc{s_{[2,4]}}q \wtc{s_{[3,5]}}q \wtc{s_{[1,2]}}q \wtc{s_{[2,3]}}q 
& \Fcedab
& 45213, 45132, 43512 \\
& & & \\
& & & \\
\hline
& & &\\
& & &\\
52431 & 
\wtc{s_{[1,2]}}q \wtc{s_{[2,5]}}q
\wtc{s_{[1,2]}}q
& \Febdca
& 53241 \\
& & & \\
& & & \\
\hline
& & &\\
& & &\\
53421 & 
\frac 1{1+q}
\wtc{s_{[3,5]}}q \wtc{s_{[1,4]}}q \wtc{s_{[4,5]}}q
& \Fecdba
& 54231 \\
& & & \\
& & & \\
\hline
& & &\\
& & &\\
45231 & 
\wtc{s_{[2,3]}}q \wtc{s_{[3,5]}}q \wtc{s_{[1,3]}}q \wtc{s_{[3,4]}}q 
& \Fdebca
& 53412 \\
& & & \\
& & & \\
\hline
& & &\\
& & &\\
52341 & 
%\phj
\wtc{s_{[1,2]}}q \wtc{s_{[4,5]}}q \wtc{s_{[2,4]}}q \wtc{s_{[1,2]}}q \wtc{s_{[4,5]}}q
%\phj
& \Febcda
& $---$ \\
& & & \\
& & & \\
\hline
\end{tabularx}
\end{center}
%\vspace{2mm}
\ssp

\vspace*{8mm}

Of course, it would be interesting to find larger sets of permutations $w$ in $\sn$ for which $\wtc wq$ has a reversal factorization,
and for which $\wtc wq$ has no reversal factorization.

%$\sum_A$ $\sum_\sum$ $\int$ $\int_A$ $\int_\int$ $\begin{matrix} 1\\1\end{matrix}$
\section{Acknowledgements}
The authors are grateful to Sara Billey, Ashton Datko, and Greg Warrington for help with computations, and to Grant Barkley for helpful conversations.
\bibliography{my}
\end{document}